\definecolor{ufogreen}{rgb}{0.24, 0.82, 0.44}
\begin{document}


\newtheorem{theorem}{Théorème}[section]
\newtheorem{theore}{Théorème}
\newtheorem{definition}[theorem]{Définition}
\newtheorem{proposition}[theorem]{Proposition}
\newtheorem{corollary}[theorem]{Corollaire}
\newtheorem*{con}{Conjecture}
\newtheorem*{remark}{Remarque}
\newtheorem*{remarks}{Remarques}
\newtheorem*{pro}{Problème}
\newtheorem*{examples}{Exemples}
\newtheorem*{example}{Exemple}
\newtheorem{lemma}[theorem]{Lemme}


\title{Solutions monomiales minimales irréductibles dans $SL_{2}(\mathbb{Z}/p^{n}\mathbb{Z})$}

\author{Flavien Mabilat}

\date{}

\keywords{modular group; monomial solution; irreducibility}

\address{
Flavien Mabilat,
Laboratoire de Mathématiques de Reims,
UMR9008 CNRS et Université de Reims Champagne-Ardenne, 
U.F.R. Sciences Exactes et Naturelles 
Moulin de la Housse - BP 1039 
51687 Reims cedex 2,
France
}
\email{flavien.mabilat@univ-reims.fr}

\maketitle

\selectlanguage{french}
\begin{abstract}
Dans cette article, on va s'intéresser à la combinatoire des sous-groupes de congruence du groupe modulaire. Plus précisément, on va se consacrer aux solutions monomiales minimales. Celles-ci sont les solutions d'une équation matricielle (apparaissant également lors de l'étude des frises de Coxeter), modulo un entier $N$, dont toutes les composantes sont identiques et minimales pour cette propriété. L'objectif ici est de caractériser les solutions de ce type possédant une certaine propriété d'irréductibilité lorsque $N$ est égale à la puissance d'un nombre premier. 
\\
\end{abstract}

\selectlanguage{english}
\begin{abstract}
In this article, we study the combinatorics of congruence subgroups of the modular group. More precisely, we consider the notion of minimal monomial solutions. These are the solutions of a matrix equation (also appearing in the study of Coxeter friezes), modulo an integer $N$, whose components are identical and minimal for this property. The objective here is to characterize the solutions of this type verifying a property of irreducibility modulo a prime power.
\\
\end{abstract}

\selectlanguage{french}

\thispagestyle{empty}

\noindent {\bf Mots clés:} groupe modulaire; solution monomiale; irréductibilité 
\\
\begin{flushright}
\og \textit{Comprendre c'est avant tout unifier.} \fg
\\Albert Camus, \textit{Le Mythe de Sisyphe.}
\end{flushright}

\section{Introduction}

Dans cette article, on va s'intéresser aux matrices de la forme suivante : \[M_{n}(a_{1},\ldots,a_{n})=\begin{pmatrix}
   a_{n} & -1 \\[4pt]
    1    & 0 
   \end{pmatrix}
\begin{pmatrix}
   a_{n-1} & -1 \\[4pt]
    1    & 0 
   \end{pmatrix}
   \cdots
   \begin{pmatrix}
   a_{1} & -1 \\[4pt]
    1    & 0 
    \end{pmatrix}.\]
Celles-ci jouent un rôle central dans l'étude d'un nombre importants d'objets mathématiques. On les retrouve notamment  dans l'étude des fractions continues négatives où elles sont reliées aux réduites. Elles interviennent également lors de la constructions des frises de Coxeter (voir \cite{BR}) ainsi que dans bien d'autres domaines (voir, par exemple, \cite{O} section 1.3). Un de ceux-ci, sur lequel nous allons maintenant nous concentrer, est l'étude de la combinatoire du groupe modulaire \[SL_{2}(\mathbb{Z})=
\left\{
\begin{pmatrix}
a & b \\
c & d
   \end{pmatrix}
 \;\vert\;a,b,c,d \in \mathbb{Z},\;
 ad-bc=1
\right\}.\] En effet, considérons la partie génératrice formée des deux matrices suivantes :
 \[T=\begin{pmatrix}
 1 & 1 \\[2pt]
    0    & 1 
   \end{pmatrix}, S=\begin{pmatrix}
   0 & -1 \\[2pt]
    1    & 0 
   \end{pmatrix}.
 \] 
\\On peut montrer (voir, par exemple, l'introduction de \cite{Ma3}) que pour toute matrice $A$ de $SL_{2}(\mathbb{Z})$ il existe un entier strictement positif $n$ et des entiers strictement positifs $a_{1},\ldots,a_{n}$ tels que \[A=T^{a_{n}}ST^{a_{n-1}}S\cdots T^{a_{1}}S=\begin{pmatrix}
   a_{n} & -1 \\[4pt]
    1    & 0 
   \end{pmatrix}
\begin{pmatrix}
   a_{n-1} & -1 \\[4pt]
    1    & 0 
   \end{pmatrix}
   \cdots
   \begin{pmatrix}
   a_{1} & -1 \\[4pt]
    1    & 0 
    \end{pmatrix}=M_{n}(a_{1},\ldots,a_{n}).\]
	
Malheureusement, l'écriture d'un élément de $SL_{2}(\mathbb{Z})$ sous cette forme n'est pas unique (pour une façon d'assurer l'unicité d'une écriture de cette forme on peut consulter \cite{MO} section 6).
\\
\\ \indent Ceci amène naturellement à chercher les différentes écritures d'une matrice, ou d'un ensemble de matrices, du groupe modulaire sous cette forme. On s'intéresse particulièrement au cas de la matrice $Id$. Pour cela, on considère l'équation suivante: \begin{equation}
\label{a}
\tag{$E$}
M_{n}(a_1,\ldots,a_n)=\pm Id.
\end{equation} 

\noindent On dispose des solutions de celle-ci sur différents sous-ensembles de $\mathbb{C}$, notamment $\mathbb{N}^{*}$ (voir \cite{O} Théorèmes 1 et 2 et \cite{CO2} Théorème 2.2), $\mathbb{N}$ (voir \cite{C} Théorème 3.1) et $\mathbb{Z}$ (voir \cite{C} Théorème 3.2). De plus, on a, dans ces trois cas, une description combinatoire des solutions en termes de découpages de polygones convexes. On peut également résoudre l'équation \eqref{a} sur $\mathbb{Z}[\alpha]$, avec $\alpha$ un nombre complexe transcendant (voir \cite{Ma2} Théorème 2.7). La résolution de celle-ci sur d'autres sous-ensembles de $\mathbb{C}$ est encore un problème ouvert (voir \cite{C} problème ouvert 4.1).
\\
\\ \indent On va s'intéresser ici aux cas des anneaux $\mathbb{Z}/N\mathbb{Z}$, c'est-à-dire à l'étude sur $\mathbb{Z}/N\mathbb{Z}$ de l'équation :
\begin{equation}
\label{p}
\tag{$E_{N}$}
M_{n}(a_1,\ldots,a_n)=\pm Id.
\end{equation} On dira, en particulier, qu'une solution de \eqref{p} est de taille $n$ si cette solution est un $n$-uplet d'éléments de $\mathbb{Z}/N\mathbb{Z}$. L'objectif principal de cette étude est de connaître toutes les écritures des éléments des sous-groupes de congruence suivants: \[\hat{\Gamma}(N)=\{A \in SL_{2}(\mathbb{Z})~{\rm tel~que}~A= \pm Id~( {\rm mod}~N)\}\] sous la forme $M_{n}(a_1,\ldots,a_n)$ avec les $a_{i}$ des entiers strictement positifs. 
\\
\\ \indent Un certain nombre de résultats sur l'équation \eqref{p} ont déjà été obtenus lors de précédents travaux (voir \cite{Ma1, M, M2, Ma4}). Le point central de ceux-ci est l'utilisation d'une notion de solutions irréductibles, déjà utilisée par M.\ Cuntz pour l'étude des frises de Coxeter, à partir de laquelle on peut construire l'ensemble des solutions (voir section suivante). Celle-ci a permis la résolution complète de \eqref{p} pour $N \leq 6$ (voir \cite{M} section 4) ainsi que l'obtention de plusieurs résultats généraux d'irréductibilité. 
\\
\\ \indent La majeure partie de ces résultats concernent les solutions monomiales minimales qui sont les solutions de \eqref{p} dont toutes les composantes sont identiques et minimales pour cette propriété (voir \cite{M} section 3.3 et la section suivante). Parmi ces résultats, l'un des plus marquants est l'irréductibilité de toutes les solutions monomiales minimales non nulles lorsque $N$ est premier (voir \cite{M} Théorème 3.16). Ce résultat de classification des solutions monomiales minimales irréductibles est, pour le moment, le seul dont nous disposons. Notre objectif est ici de généraliser ce théorème en classifiant les solutions monomiales minimales irréductibles lorsque $N$ est égale à la puissance d'un nombre premier. Pour cela, on va donner, dans la partie suivante, les définitions qui nous seront utiles pour la suite ainsi que le résultat principal. On démontrera ensuite celui-ci dans la section \ref{DT}. Pour terminer, on donnera, dans la section \ref{taille}, quelques résultats sur la taille de certaines solutions monomiales minimales.

\section{Définitions et résultat principal}
\label{RP}    

L'objectif de cette section est de rappeler les définitions nécessaires à l'étude de l'équation \eqref{p}, évoquées dans la section précédente, et, introduites précédemment dans \cite{C} et \cite{M}. Il s'agit également d'énoncer le résultat principal de l'étude qui va être menée dans la suite. Sauf mention contraire, $N$ désigne un entier naturel supérieur à $2$, et, s'il n'y a pas d'ambiguïté sur $N$, on note $\overline{a}=a+N\mathbb{Z}$ (avec $a \in \mathbb{Z}$). $\mathbb{P}$ représente l'ensemble des nombres premiers et $\varphi$ la fonction indicatrice d'Euler.

\begin{definition}[\cite{C}, lemme 2.7]
\label{21}

Soient $(\overline{a_{1}},\ldots,\overline{a_{n}}) \in (\mathbb{Z}/N \mathbb{Z})^{n}$ et $(\overline{b_{1}},\ldots,\overline{b_{m}}) \in (\mathbb{Z}/N \mathbb{Z})^{m}$. On définit l'opération ci-dessous: \[(\overline{a_{1}},\ldots,\overline{a_{n}}) \oplus (\overline{b_{1}},\ldots,\overline{b_{m}})= (\overline{a_{1}+b_{m}},\overline{a_{2}},\ldots,\overline{a_{n-1}},\overline{a_{n}+b_{1}},\overline{b_{2}},\ldots,\overline{b_{m-1}}).\] Le $(n+m-2)$-uplet obtenu est appelé la somme de $(\overline{a_{1}},\ldots,\overline{a_{n}})$ avec $(\overline{b_{1}},\ldots,\overline{b_{m}})$.

\end{definition}

\begin{examples}

{\rm On donne ci-dessous quelques exemples de sommes :
\begin{itemize}
\item $(\overline{3},\overline{2},\overline{1}) \oplus (\overline{1},\overline{2},\overline{3})= (\overline{6},\overline{2},\overline{2},\overline{2})$;
\item $(\overline{-2},\overline{0},\overline{-1},\overline{1}) \oplus (\overline{3},\overline{-2},\overline{2}) = (\overline{0},\overline{0},\overline{-1},\overline{4},\overline{-2})$;
\item $n \geq 2$, $(\overline{a_{1}},\ldots,\overline{a_{n}}) \oplus (\overline{0},\overline{0}) = (\overline{0},\overline{0}) \oplus (\overline{a_{1}},\ldots,\overline{a_{n}})=(\overline{a_{1}},\ldots,\overline{a_{n}})$.
\end{itemize}
}
\end{examples}

L'opération $\oplus$ définie ci-dessus n'est ni commutative ni associative (voir \cite{WZ} exemple 2.1). En revanche, celle-ci possède la propriété suivante : si $(\overline{b_{1}},\ldots,\overline{b_{m}})$ est une solution de \eqref{p} alors la somme $(\overline{a_{1}},\ldots,\overline{a_{n}}) \oplus (\overline{b_{1}},\ldots,\overline{b_{m}})$ est solution de \eqref{p} si et seulement si $(\overline{a_{1}},\ldots,\overline{a_{n}})$ est solution de \eqref{p} (voir \cite{C,WZ} et \cite{M} proposition 3.7). 

\begin{definition}[\cite{C}, définition 2.5]
\label{22}

 Soient $(\overline{a_{1}},\ldots,\overline{a_{n}}) \in (\mathbb{Z}/N \mathbb{Z})^{n}$ et $(\overline{b_{1}},\ldots,\overline{b_{n}}) \in (\mathbb{Z}/N \mathbb{Z})^{n}$. On dit que $(\overline{a_{1}},\ldots,\overline{a_{n}}) \sim (\overline{b_{1}},\ldots,\overline{b_{n}})$ si $(\overline{b_{1}},\ldots,\overline{b_{n}})$ est obtenu par permutation circulaire de $(\overline{a_{1}},\ldots,\overline{a_{n}})$ ou de $(\overline{a_{n}},\ldots,\overline{a_{1}})$.

\end{definition}

On vérifie sans difficulté que $\sim$ est une relation d'équivalence sur les $n$-uplets d'éléments de $\mathbb{Z}/N \mathbb{Z}$ (voir \cite{WZ} lemme 1.7). D'autre part, si $(\overline{a_{1}},\ldots,\overline{a_{n}}) \sim (\overline{b_{1}},\ldots,\overline{b_{n}})$ alors $(\overline{a_{1}},\ldots,\overline{a_{n}})$ est solution de \eqref{p} si et seulement si $(\overline{b_{1}},\ldots,\overline{b_{n}})$ est solution de \eqref{p} (voir \cite{C} proposition 2.6).
\\
\\On peut maintenant définir la notion d'irréductibilité annoncée dans l'introduction :

\begin{definition}[\cite{C}, définition 2.9]
\label{23}

Une solution $(\overline{c_{1}},\ldots,\overline{c_{n}})$ avec $n \geq 3$ de \eqref{p} est dite réductible s'il existe une solution de \eqref{p} $(\overline{b_{1}},\ldots,\overline{b_{l}})$ et un $m$-uplet $(\overline{a_{1}},\ldots,\overline{a_{m}})$ d'éléments de $\mathbb{Z}/N \mathbb{Z}$ tels que \begin{itemize}
\item $(\overline{c_{1}},\ldots,\overline{c_{n}}) \sim (\overline{a_{1}},\ldots,\overline{a_{m}}) \oplus (\overline{b_{1}},\ldots,\overline{b_{l}})$;
\item $m \geq 3$ et $l \geq 3$.
\end{itemize}
Une solution est dite irréductible si elle n'est pas réductible.

\end{definition}

\begin{remark} 

{\rm On ne considère pas $(\overline{0},\overline{0})$ comme une solution irréductible de \eqref{p}.}

\end{remark}

\indent Au cours des différents travaux déjà menés sur l'équation \eqref{p}, on a consacré une attention particulière à la notion de solutions monomiales rappelée dans la définition qui suit :

\begin{definition}[\cite{M}, définition 3.9]
\label{24}

i)~Soient $n \in \mathbb{N}^{*}$ et $\overline{k} \in \mathbb{Z}/N\mathbb{Z}$. On appelle solution $(n,\overline{k})$-monomiale un $n$-uplet d'éléments de $\mathbb{Z}/ N \mathbb{Z}$ constitué uniquement de $\overline{k}$ et solution de \eqref{p}. 
\\
\\ ii)~On appelle solution monomiale une solution pour laquelle il existe $m \in \mathbb{N}^{*}$ et $\overline{l} \in \mathbb{Z}/N\mathbb{Z}$ tels qu'elle est $(m,\overline{l})$-monomiale.
\\
\\ iii)~On appelle solution $\overline{k}$-monomiale minimale une solution $(n,\overline{k})$-monomiale avec $n$ le plus petit entier pour lequel il existe une solution $(n,\overline{k})$-monomiale.
\\
\\ iv)~On appelle solution monomiale minimale une solution $\overline{k}$-monomiale minimale pour un $\overline{k} \in \mathbb{Z}/N\mathbb{Z}$.

\end{definition}

Notons que pour tout $\overline{k} \in \mathbb{Z}/N\mathbb{Z}$ il y a existence et unicité de la solution $\overline{k}$-monomiale minimale de \eqref{p}, puisque $M_{1}(\overline{k})$ est d'ordre fini dans $PSL_{2}(\mathbb{Z}/N\mathbb{Z})$. Un certain nombre de propriétés d'irréductibilité pour ces solutions ont été démontrées précédemment (voir \cite{M, M2,Ma4} et la section \ref{RP}). En particulier, on a :

\begin{theorem}[\cite{M}, Théorème 3.16]
\label{25}

Si $N$ est premier alors les solutions monomiales minimales non nulles de \eqref{p} sont irréductibles. 

\end{theorem}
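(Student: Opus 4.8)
Le plan est de ramener la réductibilité éventuelle de la solution à une condition purement numérique sur une suite de continuants. On pose $N=p$ premier et l'on identifie $\mathbb{Z}/p\mathbb{Z}=\mathbb{F}_{p}$. Notons $A_{x}=\left(\begin{smallmatrix} x & -1 \\ 1 & 0\end{smallmatrix}\right)$, de sorte que $M_{n}(\overline{k},\ldots,\overline{k})=A_{\overline{k}}^{\,n}$, et introduisons la suite $(S_{j})_{j\geq -1}$ donnée par $S_{-1}=\overline{0}$, $S_{0}=\overline{1}$ et $S_{j+1}=\overline{k}\,S_{j}-S_{j-1}$. Une récurrence immédiate fournit $A_{\overline{k}}^{\,j}=\left(\begin{smallmatrix} S_{j} & -S_{j-1} \\ S_{j-1} & -S_{j-2}\end{smallmatrix}\right)$, d'où : le $n$-uplet constant $(\overline{k},\ldots,\overline{k})$ est solution de \eqref{p} si et seulement si $S_{n-1}=\overline{0}$, la minimalité signifiant que $n-1$ est le plus petit indice annulant $S$.

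Je suppose ensuite qu'une telle solution monomiale minimale, de taille $n$, est réductible, disons $(\overline{k},\ldots,\overline{k})\sim (\overline{a_{1}},\ldots,\overline{a_{m}})\oplus(\overline{b_{1}},\ldots,\overline{b_{l}})$ avec $(\overline{b_{i}})$ solution et $m,l\geq 3$ ; la propriété rappelée juste après la Définition \ref{21} assure que $(\overline{a_{i}})$ est alors aussi solution. Comme $\sim$ préserve le multi-ensemble des composantes, la somme est le $n$-uplet constant $\overline{k}$, et la forme explicite de $\oplus$ impose $\overline{a_{2}}=\cdots=\overline{a_{m-1}}=\overline{b_{2}}=\cdots=\overline{b_{l-1}}=\overline{k}$ ainsi que $\overline{a_{1}}+\overline{b_{l}}=\overline{a_{m}}+\overline{b_{1}}=\overline{k}$. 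En écrivant la condition de solution du premier facteur sous la forme $A_{\overline{a_{m}}}A_{\overline{k}}^{\,m-2}A_{\overline{a_{1}}}=\pm Id$ et en identifiant les coefficients, on obtient $\overline{a_{1}}=\overline{a_{m}}=:\overline{s}$, $S_{m-2}=\pm\overline{1}$ et $\overline{s}=S_{m-2}\,S_{m-3}$ (l'égalité du coefficient $(1,1)$ étant alors garantie par l'identité $S_{j}^{2}-S_{j+1}S_{j-1}=\overline{1}$) ; de même le second facteur donne $\overline{b_{1}}=\overline{b_{l}}=\overline{k}-\overline{s}$ et $S_{l-2}=\pm\overline{1}$. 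La relation de bord $\overline{s}+(\overline{k}-\overline{s})=\overline{k}$ se réduit, en utilisant la symétrie $S_{m-2}=-\epsilon S_{l-2}$ issue de $A_{\overline{k}}^{\,n}=\epsilon\,Id$ (où $\epsilon=\pm\overline{1}$), à l'égalité $\overline{k}\,(S_{l-2}^{2}-\overline{1})=\overline{0}$ ; comme $\mathbb{Z}/p\mathbb{Z}$ est intègre et $\overline{k}\neq\overline{0}$, celle-ci est automatiquement vérifiée dès que $S_{l-2}=\pm\overline{1}$. En posant $i=m-2$, on aboutit à l'équivalence clef : la solution est réductible si et seulement s'il existe $i$ tel que $1\leq i\leq n-3$ et $S_{i}=\pm\overline{1}$.

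Il reste à prouver que, pour $\overline{k}\neq\overline{0}$, aucun indice $i\in\{1,\ldots,n-3\}$ ne vérifie $S_{i}=\pm\overline{1}$. Pour $\overline{k}\neq\pm\overline{2}$, soit $\lambda$ une racine de $X^{2}-\overline{k}X+\overline{1}$ dans $\overline{\mathbb{F}_{p}}$ et $d$ son ordre multiplicatif ; la formule $S_{i}=(\lambda^{i+1}-\lambda^{-i-1})/(\lambda-\lambda^{-1})$ montre que $S_{i}=\overline{0}$ équivaut à $d\mid 2(i+1)$, et que $S_{i}=\pm\overline{1}$ équivaut à $d\mid 2i$ ou $d\mid 2(i+2)$. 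La description du premier zéro fournit $n=d/2$ si $d$ est pair et $n=d$ si $d$ est impair ; dans les deux cas, pour $1\leq i\leq n-3$ on a $0<i<n$ et $0<i+2<n$, de sorte que ni $d\mid 2i$ ni $d\mid 2(i+2)$, d'où $S_{i}\neq\pm\overline{1}$. Le cas $\overline{k}=\pm\overline{2}$, où $S_{i}=(\pm\overline{1})^{i}\,\overline{i+1}$ et $n=p$, se traite directement de la même manière ; l'hypothèse $\overline{k}\neq\overline{0}$ ne sert qu'à écarter la solution $(\overline{0},\overline{0})$ de taille $2$. Le principal obstacle est la deuxième étape, à savoir établir rigoureusement l'équivalence — en particulier l'égalité des coefficients de bord de chaque facteur et le caractère automatique du recollement grâce à l'intégrité de $\mathbb{Z}/p\mathbb{Z}$ —, le décompte arithmétique final reliant $n$ et $d$ étant ensuite élémentaire.
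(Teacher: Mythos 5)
Votre démonstration est correcte, mais elle emprunte un chemin réellement différent de celui du texte. Dans l'article, le théorème \ref{25} est obtenu comme cas particulier de la proposition \ref{35} (voir la remarque i) qui la suit) : après avoir ramené, exactement comme vous, la décomposition à deux facteurs de la forme $(\overline{a},\overline{k},\ldots,\overline{k},\overline{a})$, la preuve invoque la proposition \ref{31} pour obtenir la contrainte quadratique $\overline{a}(\overline{a}-\overline{k})=\overline{0}$, puis l'intégrité de $\mathbb{Z}/p\mathbb{Z}$ (ou le lemme \ref{34} pour $p^{n}$) pour conclure $\overline{a}\in\{\overline{0},\overline{k}\}$, et élimine chaque cas par un argument de minimalité : si $\overline{a}=\overline{0}$, on extrait $(\overline{0},\overline{0},\overline{0},\overline{0})$ et on exhibe une solution constante plus courte ; si $\overline{a}=\overline{k}$, le compte des tailles force l'autre facteur à être $(\overline{0},\overline{0})$. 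Vous remplacez ces deux ingrédients par un calcul explicite : l'identification des coefficients de $A_{\overline{a_{m}}}A_{\overline{k}}^{m-2}A_{\overline{a_{1}}}=\pm Id$ caractérise complètement les solutions de cette forme ($S_{m-2}=\pm\overline{1}$, extrémités égales à $S_{m-2}S_{m-3}$), puis l'ordre $d$ de la racine $\lambda$ de $X^{2}-\overline{k}X+\overline{1}$ dans $\overline{\mathbb{F}_{p}}$ exclut $S_{i}=\pm\overline{1}$ pour $1\leq i\leq n-3$. J'ai vérifié les deux points que vous signalez comme délicats : l'identification des coefficients est rigoureuse (le coefficient $(1,1)$ est bien automatique via $S_{j}^{2}-S_{j+1}S_{j-1}=\overline{1}$), et seule l'implication directe (réductible entraîne l'existence d'un tel $i$) est nécessaire au théorème, le recollement réciproque étant superflu. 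Votre approche est autonome (elle n'utilise ni la proposition \ref{31} ni l'argument de minimalité du texte) et fournit en prime la taille exacte de la solution monomiale minimale ($n=d$ si $d$ est impair, $n=d/2$ sinon), ce qui rejoint les préoccupations de la section \ref{taille} ; en revanche, elle est propre au cas où $\mathbb{Z}/N\mathbb{Z}$ est un corps, car l'argument spectral ne s'étend pas tel quel à $\mathbb{Z}/p^{n}\mathbb{Z}$, alors que la preuve de l'article se généralise aux puissances de $p$ avec $\overline{k}$ inversible --- c'est précisément l'objet de la proposition \ref{35}.
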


On souhaite ici aller plus loin en classifiant les solutions monomiales minimales irréductibles de \eqref{p} lorsque $N$ est la puissance d'un nombre premier. Notons que l'on sait déjà que toutes les solutions monomiales minimales non nulles de $(E_{4})$ et $(E_{8})$ sont irréductibles (voir \cite{Ma4} section 3.2). Dans ce texte, on va démontrer le résultat ci-dessous :

\begin{theorem}
\label{26}

Soient $p$ un nombre premier, $n$ un entier naturel non nul et $N=p^{n}$. Soit $k \in \mathbb{Z}$.
\\
\\i) Si $p \neq 2$. La solution $\overline{k}$-monomiale minimale de \eqref{p} est irréductible si et seulement si $p$ ne divise pas $k$. En particulier, \eqref{p} a $\varphi(p^{n})=p^{n-1}(p-1)$ solutions monomiales minimales irréductibles.
\\
\\ii) Si $p=2$. La solution $\overline{k}$-monomiale minimale de \eqref{p} est irréductible si et seulement si  une des propriétés suivantes est vérifiée :
\begin{itemize}
\item $k$ est impair; 
\item $\overline{k}=\overline{2^{n-1}}$;
\item $n \geq 2$ et il existe un entier impair $a$ tel que $k=2a$.
\end{itemize}

\noindent En particulier, si $n \geq 3$, \eqref{p} a $3 \times 2^{n-2}+1$ solutions monomiales minimales irréductibles.

\end{theorem}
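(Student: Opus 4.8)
The plan is to recast everything in terms of the order of the matrix $M_{\overline k}=T^{k}S=\left(\begin{smallmatrix} k & -1\\ 1 & 0\end{smallmatrix}\right)$ and of a Chebyshev-type sequence. Setting $S_{-1}=0$, $S_{0}=1$ and $S_{j}=kS_{j-1}-S_{j-2}$, one has $M_{\overline k}^{\,j}=\left(\begin{smallmatrix} S_{j} & -S_{j-1}\\ S_{j-1} & -S_{j-2}\end{smallmatrix}\right)$, so that $(\overline k,\ldots,\overline k)$ of length $j$ is a solution of \eqref{p} exactly when $M_{\overline k}^{\,j}=\pm Id$, i.e. when $S_{j-1}\equiv 0 \pmod{p^{n}}$. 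Thus the $\overline k$-monomial minimal solution has size $n_{k}=1+j_{0}$, with $j_{0}$ the least $j\ge 1$ for which $S_{j}\equiv 0$, and I would record the palindromic symmetry $S_{n_{k}-2-j}=-\varepsilon_{0}S_{j}$, where $\varepsilon_{0}=\pm1$ satisfies $M_{\overline k}^{\,n_{k}}=\varepsilon_{0}Id$.

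Next I would convert reducibility into an interior condition on $(S_{j})$. Since $(\overline k,\ldots,\overline k)$ is fixed by $\sim$, a reduction is precisely a splitting $(\overline k,\ldots,\overline k)=(\overline{a_{1}},\ldots,\overline{a_{m}})\oplus(\overline{b_{1}},\ldots,\overline{b_{l}})$ into two solutions with $m,l\ge 3$. The shape of $\oplus$ forces all inner entries to equal $\overline k$ and the glued entries to satisfy $a_{1}+b_{l}=a_{m}+b_{1}=k$; substituting the shape $(x,\overline k,\ldots,\overline k,y)$ into the product, via $\left(\begin{smallmatrix} a & -1\\ 1 & 0\end{smallmatrix}\right)=T^{a}S$, and comparing with $M_{\overline k}^{\,m-2}$ yields $x=y$, $x(x-k)\equiv 0$, and the scalar condition $S_{m-2}\equiv\pm1\pmod{p^{n}}$ together with $x=S_{m-2}S_{m-3}$. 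Minimality of $n_{k}$ excludes $x\in\{0,k\}$, whence $p\mid x$ and both factor-lengths are even. Conversely, if $S_{2s}\equiv\pm1$ for some $s$ with $1\le s\le n_{k}/2-2$, the palindromic identity makes the complementary factor automatically a solution, producing a reduction. So I obtain the criterion: the solution is reducible iff $S_{2s}\equiv\pm1\pmod{p^{n}}$ for some interior index $1\le s\le n_{k}/2-2$.

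The unit case is then immediate and parity-free: when $p\nmid k$ the congruence $x(x-k)\equiv 0\pmod{p^{n}}$ has only $x\equiv0$ and $x\equiv k$, so no reduction exists and the solution is irreducible — this covers all odd $k$ when $p=2$ and all $k$ prime to $p$ when $p$ is odd. For the non-unit cases the heart is the valuation lemma $v_{p}(S_{2s}^{2}-1)=v_{p}(S_{2s-1}S_{2s+1})=2e+v_{p}\big(s(s+1)\big)$, with $e=v_{p}(k)$, obtained from the Chebyshev identity $S_{2s}^{2}-1=S_{2s-1}S_{2s+1}$. For $p$ odd I would prove it through the eigenvalue $\lambda$ ($\lambda+\lambda^{-1}=k$, unramified since $k^{2}-4\equiv-4$), which gives $v_{p}(\lambda^{4}-1)=e$, hence $v_{p}(S_{2s\mp1})=e+v_{p}(s)$, resp. $e+v_{p}(s+1)$, by lifting-the-exponent, and also $n_{k}=2p^{\,n-e}$; for $p=2$ it comes from $S_{2s-1}=kS_{s-1}(k^{2}-2)$, giving $v_{2}(S_{2s-1})=e$ or $e+v_{2}(s)$ according to the parity of $s$, and $n_{k}=2^{\,n-e+1}$.

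Finally I would maximise $v_{p}(s(s+1))$ over $1\le s\le n_{k}/2-2$, whose maximum is $n-e-1$. For $p$ odd, $S_{2s}\equiv\pm1$ reduces to $v_{p}(s(s+1))\ge n-2e$, attainable iff $e\ge1$; hence $p\mid k$ is always reducible and the irreducible solutions are exactly the $\varphi(p^{n})$ units. The decisive subtlety at $p=2$ is that $\pm1$ are not the only square roots of $1$ modulo $2^{n}$: since $S_{2s}$ is odd, the two consecutive even numbers $S_{2s}\mp1$ contribute an extra factor $2$, so $S_{2s}\equiv\pm1$ is equivalent to $v_{2}(S_{2s}^{2}-1)\ge n+1$, i.e. $v_{2}(s(s+1))\ge n+1-2e$; this is attainable iff $n-e-1\ge n+1-2e$, i.e. iff $e\ge2$, whereas $\overline k=\overline{2^{n-1}}$ ($e=n-1$) gives an empty range of indices. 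Reading off the irreducible exponents $e\in\{0,1,n-1\}$ gives the count $3\cdot2^{n-2}+1$ for $n\ge3$. I expect this last $2$-adic analysis — the exact valuation lemma and the value of $n_{k}$ for $p=2$, where that stray factor of $2$ is exactly what separates $v_{2}(k)=1$ from $v_{2}(k)\ge2$ and forces the special role of $\overline{2^{n-1}}$ — to be the main obstacle.
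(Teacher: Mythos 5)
Your proposal follows a genuinely different route from the paper, and its skeleton is sound. The unit case is the same on both sides (the only roots of $X(X-\overline{k})$ are $\overline{0}$ and $\overline{k}$, and minimality excludes both; cf.\ lemme \ref{34} and proposition \ref{35}). For everything else the paper never introduces the sequence $S_{j}$ or valuations: it exhibits explicit reducing factors --- $(\overline{2ap^{n-1}},\overline{ap^{m}},\ldots,\overline{ap^{m}},\overline{2ap^{n-1}})$ for odd $p$ (lemme \ref{310}) and $(\overline{a2^{n-1}},\overline{a2^{m}},\ldots,\overline{a2^{m}},\overline{a2^{n-1}})$ for $p=2$, $2\le m\le n-2$ (lemme \ref{313}) --- by direct binomial expansion of powers of $-Id+p^{m}D$, computes the minimal sizes separately (proposition \ref{311}, corollaire \ref{spe2}), proves irreducibility of $\overline{2a}$ ($a$ odd) by induction on $n$ using the exact value of $M_{2^{n}}(\overline{2a},\ldots,\overline{2a})$ modulo $2^{n+1}$ (lemme \ref{317}, proposition \ref{319}), and imports the case $\overline{k}=\overline{2^{n-1}}$ from earlier work (théorème \ref{32}). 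Your single criterion --- reducible iff $S_{2s}\equiv\pm1$ at an interior even index, combined with $v_{p}(S_{2s}^{2}-1)=2e+v_{p}(s(s+1))$ --- treats all cases uniformly, explains structurally why $e=1$ and $e=n-1$ are exceptional at $p=2$, and recovers the size formulas of proposition \ref{311}, corollaire \ref{spe2} and section \ref{taille} as byproducts; what the paper's approach buys in exchange is elementary self-containedness (no eigenvalues, no lifting-the-exponent).

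Two concrete points in your write-up are wrong as stated and must be repaired. First, the opening equivalence ``the length-$j$ monomial tuple is a solution iff $S_{j-1}\equiv 0 \pmod{p^{n}}$'' fails for $p=2$ and $k$ odd: $S_{j-1}\equiv0$ only gives $M_{\overline{k}}^{\,j}=S_{j}\,Id$ with $S_{j}^{2}\equiv1$, and modulo $2^{n}$ ($n\ge3$) this allows $S_{j}\equiv 2^{n-1}\pm1$. Concretely, for $N=8$, $k=3$ one has $S_{2}\equiv0$ but $M_{\overline{3}}^{\,3}=\overline{5}\,Id\neq\pm Id$, so $(\overline{3},\overline{3},\overline{3})$ is not a solution of $(E_{8})$ and the minimal size is $6$, not $1+j_{0}=3$. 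The flaw is harmless exactly where you use the formula: for even $k$ the identity $S_{j}^{2}-1=S_{j-1}S_{j+1}$ gives $v_{2}(S_{j}^{2}-1)\ge n+e\ge n+1$, hence $S_{j}\equiv\pm1\pmod{2^{n}}$, and for odd $k$ your unit argument never invokes $n_{k}$ --- but the framework has to say this. Second, the $p=2$ doubling identity is misstated: $S_{2s-1}=kS_{s-1}(k^{2}-2)$ is false already for $s=2$ (where $S_{3}=k(k^{2}-2)$, with no extra factor $k$); the correct identity is $S_{2s-1}=S_{s-1}\,\mathrm{tr}(M_{\overline{k}}^{\,s})$, from which the uniform statement $v_{2}(S_{2s-1})=e+v_{2}(s)$ (your parity split collapses to this) follows by unrolling, since for $k$ even $\mathrm{tr}(M_{\overline{k}}^{\,u})$ has valuation $e$ for $u$ odd and valuation $1$ for $u$ even. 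With these two local repairs your valuation lemma, the computation of $n_{k}$, and the maximisation over interior indices all go through and reproduce the theorem, including both counts.
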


Ce théorème, démontré dans la section suivante, permet d'obtenir une description exhaustive des solutions monomiales minimales irréductibles lorsque $N$ est la puissance d'un nombre premier. Notons que, comme dans de nombreux théorèmes de classification (notamment en théorie des groupes), le cas $p=2$ est différent des autres. Dans la dernière partie, on utilise un certain nombre d'éléments obtenus dans la preuve de ce résultat pour avoir des informations sur la taille de certaines solutions monomiales minimales.

\section{Démonstration du théorème \ref{26}}
\label{DT}

\noindent Dans cette section, on va démontrer le résultat décrit précédemment.

\subsection{Résultats préliminaires sur les solutions monomiales minimales}
\label{RP}

L'objectif de cette partie est de donner quelques résultats sur les solutions monomiales minimales qui nous seront utiles dans la suite. On commence par rappeler un certain nombre d'éléments déjà connus :

\begin{proposition}[\cite{M}, section 3.1]
\label{spe}

i)~\eqref{p} n'a pas de solution de taille 1.
\\ii)~$(\overline{0},\overline{0})$ est l'unique solution de \eqref{p} de taille 2.
\\iii)~$(\overline{1},\overline{1},\overline{1})$ et $(\overline{-1},\overline{-1},\overline{-1})$ sont les seules solutions de \eqref{p} de taille 3.
\\iv)~Les solutions de \eqref{p} de taille 4 sont de la forme $(\overline{-a},\overline{b},\overline{a},\overline{-b})$ avec $\overline{ab}=\overline{0}$ et $(\overline{a},\overline{b},\overline{a},\overline{b})$ avec $\overline{ab}=\overline{2}$. 

\end{proposition}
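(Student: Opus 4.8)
The plan is to prove each item by a direct computation of the matrix $M_n(a_1,\ldots,a_n)$ for $n\in\{1,2,3,4\}$, followed by solving entrywise the congruence $M_n(a_1,\ldots,a_n)=\epsilon\,Id$ with $\epsilon\in\{1,-1\}$. I would pass from one size to the next with the recursion $M_n(a_1,\ldots,a_n)=\begin{pmatrix} a_n & -1\\ 1 & 0\end{pmatrix}M_{n-1}(a_1,\ldots,a_{n-1})$, and I would repeatedly use the standing hypothesis $N\geq 2$, which guarantees $\overline{0}\neq\overline{1}$ in $\mathbb{Z}/N\mathbb{Z}$.

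For $n=1$ the matrix is $\begin{pmatrix} a_1 & -1\\ 1 & 0\end{pmatrix}$, whose lower-left entry equals $\overline{1}\neq\overline{0}$, so it can never be $\pm Id$; this gives (i). For $n=2$ one computes $M_2(a_1,a_2)=\begin{pmatrix} a_1a_2-1 & -a_2\\ a_1 & -1\end{pmatrix}$, and vanishing of the two off-diagonal entries forces $\overline{a_1}=\overline{a_2}=\overline{0}$, which yields $M_2=-Id$; hence $(\overline{0},\overline{0})$ is the unique size-$2$ solution, proving (ii). For $n=3$ the recursion gives
\[M_3(a_1,a_2,a_3)=\begin{pmatrix} a_1a_2a_3-a_1-a_3 & 1-a_2a_3\\ a_1a_2-1 & -a_2\end{pmatrix}.\]
Setting the off-diagonal entries to $\overline{0}$ forces $\overline{a_1a_2}=\overline{a_2a_3}=\overline{1}$, so $\overline{a_2}$ is invertible and $\overline{a_1}=\overline{a_3}=\overline{a_2}^{-1}$; reducing the $(1,1)$ entry with $\overline{a_1a_2}=\overline{1}$ leaves $-\overline{a_1}$ there, so requiring both diagonal entries to equal $\epsilon$ forces $\overline{a_1}=\overline{a_2}=\overline{a_3}=-\epsilon$. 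The two sign choices produce exactly $(\overline{1},\overline{1},\overline{1})$ and $(\overline{-1},\overline{-1},\overline{-1})$, which is (iii).

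For $n=4$ I would multiply once more, writing $p,q,r,s$ for the entries of $M_3$ just obtained, so that
\[M_4=\begin{pmatrix} a_4 & -1\\ 1 & 0\end{pmatrix}M_3=\begin{pmatrix} a_4p-r & a_4q-s\\ p & q\end{pmatrix}.\]
Imposing $M_4=\epsilon\,Id$ gives four congruences: the $(2,1)$ entry yields $\overline{a_1a_2a_3-a_1-a_3}=\overline{0}$; the $(2,2)$ entry gives $\overline{1-a_2a_3}=\epsilon$; the $(1,1)$ entry, after using $p=\overline{0}$, gives $\overline{1-a_1a_2}=\epsilon$; and the $(1,2)$ entry gives $\overline{a_4}\,\epsilon+\overline{a_2}=\overline{0}$. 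Hence $\overline{a_1a_2}=\overline{a_2a_3}=\overline{1-\epsilon}$, and I would split on $\epsilon$. For $\epsilon=1$ one gets $\overline{a_1a_2}=\overline{a_2a_3}=\overline{0}$, so the $(2,1)$ relation reduces to $\overline{a_3}=-\overline{a_1}$ and the $(1,2)$ relation to $\overline{a_4}=-\overline{a_2}$; setting $a=a_3$, $b=a_2$ recovers $(\overline{-a},\overline{b},\overline{a},\overline{-b})$ with $\overline{ab}=\overline{0}$. For $\epsilon=-1$ one gets $\overline{a_1a_2}=\overline{a_2a_3}=\overline{2}$, so the $(2,1)$ relation reduces to $\overline{a_3}=\overline{a_1}$ and the $(1,2)$ relation to $\overline{a_4}=\overline{a_2}$; setting $a=a_1$, $b=a_2$ recovers $(\overline{a},\overline{b},\overline{a},\overline{b})$ with $\overline{ab}=\overline{2}$. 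Since every deduction above is reversible, substituting these two families back in confirms $M_4=\pm Id$, giving (iv).

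I do not expect a deep obstacle here, since all of this is elementary matrix arithmetic; the only genuine care is needed in the size-$4$ case. There I must carry out the split on $\epsilon$ correctly, eliminate $a_3$ and $a_4$ so that precisely the constraint $\overline{ab}=\overline{0}$ (respectively $\overline{ab}=\overline{2}$) survives, choose the re-parametrization matching the stated normal forms, and verify that both families satisfy all four entrywise equations rather than only those used to derive them.
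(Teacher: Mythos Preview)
Your computation is correct in all four cases, and the deductions for $n=4$ are carried out carefully; the split on $\epsilon$ and the resulting parametrizations match the stated normal forms, and reversibility is clear. Note, however, that the paper does not supply its own proof of this proposition: it is merely recalled from \cite{M}, section~3.1, as background. Your direct entrywise calculation is exactly the standard argument one would expect in that reference, so there is nothing to contrast.
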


\begin{proposition}[\cite{M}, proposition 3.15]
\label{31}

Soient $n \in \mathbb{N}^{*}$, $n \geq 3$ et $(\overline{a},\overline{b},\overline{k}) \in (\mathbb{Z}/N\mathbb{Z})^{3}$. 
\\Si $(\overline{a},\overline{k},\overline{k},\ldots,\overline{k},\overline{b}) \in (\mathbb{Z}/N\mathbb{Z})^{n}$ est solution de \eqref{p} alors $\overline{a}=\overline{b}$ et on a \[\overline{a}(\overline{a}-\overline{k})=\overline{0}.\]
	
\end{proposition}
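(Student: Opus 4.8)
The plan is to compute the matrix attached to the tuple $(\overline{a},\overline{k},\ldots,\overline{k},\overline{b})$ explicitly and read off the constraints imposed by $M_{n}=\pm Id$. Writing $M(\overline{x})$ for the elementary factor $\left(\begin{smallmatrix}\overline{x}&-1\\1&0\end{smallmatrix}\right)$, this tuple corresponds to the product $M_{n}=M(\overline{b})\,M(\overline{k})^{\,n-2}\,M(\overline{a})$, with $n-2\geq 1$ copies of the middle factor. So the first step is to obtain a closed form for $M(\overline{k})^{m}$. Introducing the sequence $(\alpha_{m})$ defined by $\alpha_{-1}=\overline{0}$, $\alpha_{0}=\overline{1}$ and $\alpha_{m+1}=\overline{k}\,\alpha_{m}-\alpha_{m-1}$, an immediate induction gives
\[
M(\overline{k})^{m}=\begin{pmatrix}\alpha_{m}&-\alpha_{m-1}\\[2pt]\alpha_{m-1}&-\alpha_{m-2}\end{pmatrix}.
\]

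Setting $m=n-2$ and multiplying out, I would obtain
\[
M_{n}=\begin{pmatrix}\overline{a}\,\overline{b}\,\alpha_{m}-(\overline{a}+\overline{b})\alpha_{m-1}+\alpha_{m-2} & -\overline{b}\,\alpha_{m}+\alpha_{m-1}\\[2pt] \overline{a}\,\alpha_{m}-\alpha_{m-1} & -\alpha_{m}\end{pmatrix}.
\]
Imposing $M_{n}=\pm Id$ then yields four scalar equations. The crucial observation is the bottom-right one: $-\alpha_{m}=\pm\overline{1}$, so $\alpha_{m}$ is a \emph{unit} of $\mathbb{Z}/N\mathbb{Z}$. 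The two off-diagonal equations read $\overline{a}\,\alpha_{m}=\alpha_{m-1}$ and $\overline{b}\,\alpha_{m}=\alpha_{m-1}$; subtracting and cancelling the unit $\alpha_{m}$ gives at once $\overline{a}=\overline{b}$, the first conclusion.

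For the second conclusion, I would set $\overline{b}=\overline{a}$ and use the top-left equation, which (with the sign matched to the bottom-right entry) reads $\overline{a}^{\,2}\alpha_{m}-2\overline{a}\,\alpha_{m-1}+\alpha_{m-2}=-\alpha_{m}$. Substituting the relation $\alpha_{m-1}=\overline{a}\,\alpha_{m}$ found above, together with the recurrence in the form $\alpha_{m-2}=\overline{k}\,\alpha_{m-1}-\alpha_{m}=(\overline{a}\,\overline{k}-\overline{1})\alpha_{m}$, makes everything collapse to
\[
\overline{a}(\overline{a}-\overline{k})\,\alpha_{m}=\overline{0}.
\]
Since $\alpha_{m}$ is invertible, this is exactly $\overline{a}(\overline{a}-\overline{k})=\overline{0}$.

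The arithmetic is essentially forced once the corner entry is recognized as a unit, so I do not expect a deep obstacle; the delicate points are purely bookkeeping. I would be careful with the index/sign conventions in the closed form for $M(\overline{k})^{m}$ (the placement of the three consecutive terms $\alpha_{m},\alpha_{m-1},\alpha_{m-2}$) and with the boundary case $n=3$, where $m=1$ and one must check that $\alpha_{-1}=\overline{0}$, $\alpha_{0}=\overline{1}$ keep all formulas valid; this case should moreover be checked against the size-$3$ solutions listed in Proposition~\ref{spe}.
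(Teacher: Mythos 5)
Your proof is correct, and every step checks out: the closed form $M(\overline{k})^{m}=\left(\begin{smallmatrix}\alpha_{m}&-\alpha_{m-1}\\ \alpha_{m-1}&-\alpha_{m-2}\end{smallmatrix}\right)$ follows by the immediate induction you indicate, the product $M(\overline{b})M(\overline{k})^{n-2}M(\overline{a})$ is expanded correctly, and the key observation that the $(2,2)$ entry forces $\alpha_{m}=\mp\overline{1}$ to be a unit is exactly what makes the cancellations legitimate over $\mathbb{Z}/N\mathbb{Z}$ (where one cannot cancel arbitrary nonzero elements). The final substitution indeed collapses to $\overline{a}(\overline{a}-\overline{k})\alpha_{m}=\overline{0}$, and the boundary case $n=3$ ($m=1$, using $\alpha_{-1}=\overline{0}$) is consistent with the size-$3$ solutions $(\overline{1},\overline{1},\overline{1})$ and $(\overline{-1},\overline{-1},\overline{-1})$.

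One point of comparison is worth noting: this paper does not prove Proposition~\ref{31} at all; it imports it from the reference [M, proposition 3.15], so there is no in-text argument to measure yours against. Your argument is a self-contained verification in the standard continuant style (your $\alpha_{m}$ is the continuant $K(\overline{k},\dots,\overline{k})$, a Chebyshev-type polynomial in $\overline{k}$), which is the natural route for statements of this kind and very likely the same mechanism as in the cited source. The only cosmetic remark is that the invertibility of $\alpha_m$ does slightly more than you advertise: it also gives $\overline{a}=\alpha_{m-1}\alpha_{m}^{-1}$ explicitly, which is a stronger normal form than the quadratic relation asked for, but stating the conclusion as $\overline{a}(\overline{a}-\overline{k})=\overline{0}$ is what the rest of the paper uses (via Lemme~\ref{34}), so your formulation is the right one.
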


Le théorème \ref{25} nous fournit un premier résultat d'irréductibilité des solutions monomiales minimales. On regroupe dans l'énoncé suivant d'autres résultats d'irréductibilité qui nous seront utiles dans la suite (voir \cite{M,M2,Ma4}) :

\begin{theorem}
\label{32}

\begin{itemize}
\item la solution $\overline{k}$-monomiale minimale de \eqref{p} est irréductible si et seulement si la solution $\overline{-k}$-monomiale minimale de \eqref{p} est irréductible;
\item si $N \geq 3$, les solutions $\pm \overline{2}$-monomiales minimales de \eqref{p} sont irréductibles de taille $N$;
\item si $N$ est pair supérieur à 4, la solution $\overline{\frac{N}{2}}$-monomiale minimale de \eqref{p} est irréductible. Elle est de taille 4 si $4 \mid N$ et 6 sinon.
\\
\end{itemize}

\end{theorem}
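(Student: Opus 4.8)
The plan is to treat the three items separately, letting the first furnish a symmetry that the other two exploit, and to base all concrete computations on explicit powers of the elementary matrix $R(a)=\left(\begin{smallmatrix} a & -1 \\ 1 & 0\end{smallmatrix}\right)$, so that $M_{n}(a_{1},\ldots,a_{n})=R(a_{n})\cdots R(a_{1})$.

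For the first item I would record the identity $R(-a)=-P\,R(a)\,P$ with $P=\left(\begin{smallmatrix} 1 & 0 \\ 0 & -1\end{smallmatrix}\right)$, whence $M_{n}(-a_{1},\ldots,-a_{n})=(-1)^{n}\,P\,M_{n}(a_{1},\ldots,a_{n})\,P$. Since $P^{2}=Id$, this shows that componentwise negation sends a solution of \eqref{p} (value $\pm Id$) to a solution (value $(-1)^{n}(\pm Id)$), and it is an involutive, size-preserving bijection on tuples. As negation visibly commutes both with $\oplus$ (it negates each coordinate in the formula of Definition \ref{21}) and with $\sim$, it maps the $\overline{k}$-monomial minimal solution to the $\overline{-k}$-monomial solution of the same size, so the two minimal sizes coincide, and it carries any reduction of one into a reduction of the other. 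The first item follows.

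For the second item I would use that $R(2)-Id$ is nilpotent, so $R(2)^{n}=Id+n(R(2)-Id)=\left(\begin{smallmatrix} 1+n & -n \\ n & 1-n\end{smallmatrix}\right)$; reading the entries modulo $N$ shows $M_{n}(2,\ldots,2)=\pm Id$ forces $n\equiv 0\pmod N$ (the $-Id$ branch is excluded for $N\geq 3$), so the minimal size is exactly $N$. If the size-$N$ solution were reducible, monomiality would force a splitting $(\overline 2,\ldots,\overline 2)=(\overline{a_{1}},\ldots,\overline{a_{m}})\oplus(\overline{b_{1}},\ldots,\overline{b_{l}})$ in which every interior entry of each factor equals $\overline 2$ and each factor is itself a solution, with $3\leq m,l\leq N-1$. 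Proposition \ref{31} then forces each factor to have the shape $(\overline\alpha,\overline 2,\ldots,\overline 2,\overline\alpha)$, and computing the bottom-right entry of $R(\alpha)R(2)^{m-2}R(\alpha)$ gives the condition $-(m-1)\equiv\pm 1\pmod N$, i.e. $m\equiv 0$ or $m\equiv 2\pmod N$, both impossible for $3\leq m\leq N-1$. Hence no such splitting exists, and with the first item both $\pm\overline 2$-minimal solutions are irreducible of size $N$.

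For the third item, write $N=2M$. Using $M^{2}\equiv 0\pmod N$ when $4\mid N$ and $M^{2}\equiv M\pmod N$ when $M$ is odd, a direct evaluation of the powers $R(M)^{n}$ yields $R(M)^{4}=Id$ in the first case and $R(M)^{6}=-Id$ in the second, with no smaller power equal to $\pm Id$; thus the minimal size is $4$ if $4\mid N$ and $6$ otherwise. When $4\mid N$ a size-$4$ solution can only split as $(3,3)$, and by Proposition \ref{spe}(iii) each size-$3$ factor is $\pm(\overline 1,\overline 1,\overline 1)$, so an interior entry of a factor would equal both $\pm\overline 1$ and $\overline M$, which is impossible since $\overline M\neq\pm\overline 1$ for $N\geq 4$. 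When $4\nmid N$ the size-$6$ solution could a priori split as $(3,5)$, $(4,4)$ or $(5,3)$; exactly as above each factor has the form $(\overline\alpha,\overline M,\ldots,\overline M,\overline\alpha)$ and must be a solution, and evaluating the bottom-right entry of $R(\alpha)R(M)^{j}R(\alpha)$ for $j=1$ (size $3$) and $j=2$ (size $4$) shows it cannot be $\pm 1$ modulo $N$ for odd $M\geq 3$. This excludes factor sizes $3$ and $4$, hence all three shapes, giving irreducibility. The main obstacle is bookkeeping rather than conceptual: everything rests on having the reduced powers $R(2)^{n}$ and $R(M)^{n}$ correct modulo $N$ and on the case split $4\mid N$ versus $4\nmid N$, which is precisely where the two regimes of the third item—and the exceptional behaviour at $p=2$—originate; the most delicate point is making sure that in the size-$6$ situation the only admissible factor sizes are $3,4,5$, so that excluding $3$ and $4$ settles everything.
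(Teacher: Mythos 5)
Your proof is correct, but it cannot be compared to an internal argument for the simple reason that the paper contains none: Théorème \ref{32} is stated as a compilation of results imported from \cite{M}, \cite{M2} and \cite{Ma4}, so what you have produced is a self-contained replacement for those external citations rather than a variant of the paper's proof. Checking it in detail: the identity $R(-a)=-PR(a)P$ with $P=\mathrm{diag}(1,-1)$ does show that componentwise negation preserves solutions of \eqref{p}, sizes, the sum $\oplus$ and the relation $\sim$, hence transports reductions in both directions, which gives the first item. For the second, $(R(2)-Id)^{2}=0$ yields $R(2)^{n}=Id+n(R(2)-Id)$, so the minimal size is $N$ (the $-Id$ branch forces $N\mid 2$), and your key observation --- that the bottom-right entry of $R(\alpha)XR(\alpha)$ equals minus the top-left entry of $X$, so a factor $(\overline{\alpha},\overline{2},\dots,\overline{2},\overline{\alpha})$ of size $m$ would require $-(m-1)\equiv\pm1 \pmod N$, impossible for $3\le m\le N-1$ --- is sound; note that you only need Proposition \ref{31} to equalize the two end entries, and that the interior entries are forced to equal $\overline{2}$ directly by the formula for $\oplus$. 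The same mechanism, combined with $M^{2}\equiv 0$ (case $4\mid N$) or $M^{2}\equiv M$ (case $M=N/2$ odd), correctly gives the minimal sizes $4$ and $6$ and eliminates factor sizes $3$ and $4$, which suffices since the factor sizes sum to $6$ or $8$. Compared with the route through \cite{M,M2,Ma4}, your argument buys self-containedness, and it is satisfying that it runs on exactly the machinery this paper deploys for its new results (Proposition \ref{31} plus explicit evaluation of powers of the elementary matrix, as in Lemme \ref{39} and Proposition \ref{35}); its limitation is that the algebraic shortcuts it exploits ($R(2)-Id$ nilpotent, $\overline{N/2}$ idempotent behaviour) are special to $k=\pm 2$ and $k=N/2$, so, unlike the binomial-expansion technique of the paper, they do not extend toward the general classification that is the paper's actual goal.
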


\noindent On termine cette section par le résultat ci-dessous :

\begin{lemma}
\label{33}

Soient $N$ un entier supérieur à 2 et $\overline{k}$ un élément de $\mathbb{Z}/N\mathbb{Z}$. Soit $n$ la taille de la solution $\overline{k}$-monomiale minimale de \eqref{p}. Soit $m$ un entier naturel.
\\
\\i) Si $(\overline{a},\overline{k},\ldots,\overline{k},\overline{b}) \in (\mathbb{Z}/N\mathbb{Z})^{nm}$ est une solution de \eqref{p} alors $\overline{a}=\overline{b}=\overline{k}$.
\\
\\ii) Il n'y a pas de solution de \eqref{p} de la forme $(\overline{a},\overline{k},\ldots,\overline{k},\overline{b})$ de taille $nm+1$.
\\
\\iii) Si $(\overline{a},\overline{k},\ldots,\overline{k},\overline{b}) \in (\mathbb{Z}/N\mathbb{Z})^{nm+2}$ est une solution de \eqref{p} alors $\overline{a}=\overline{b}=\overline{0}$.

\end{lemma}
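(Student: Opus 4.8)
The plan is to convert the entire statement into one matrix computation. Set
\[ A=\begin{pmatrix} \overline{k} & -1 \\ 1 & 0\end{pmatrix},\qquad E=\begin{pmatrix} 1 & 0 \\ 0 & 0\end{pmatrix}, \]
so that the monomial tuple $(\overline{k},\ldots,\overline{k})$ of size $p$ gives $M_{p}(\overline{k},\ldots,\overline{k})=A^{p}$. By definition of $n$, the $\overline{k}$-monomial solution of size $n$ is a solution of \eqref{p}, which means exactly $A^{n}=\pm Id$; writing $A^{n}=\eta_{0}Id$ with $\eta_{0}\in\{1,-1\}$ gives $A^{nm}=\eta_{0}^{\,m}Id=:\eta\,Id$ for every $m$, where $\eta$ is a unit of $\mathbb{Z}/N\mathbb{Z}$. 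This is the only way $n$ enters the argument.

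Next I would record how the two end letters perturb the product. With $\alpha=\overline{a}-\overline{k}$ and $\beta=\overline{b}-\overline{k}$ the end matrices are $A+\alpha E$ and $A+\beta E$, so for a tuple of size $p\geq 2$, expanding $(A+\beta E)A^{p-2}(A+\alpha E)$ yields
\[ M_{p}(\overline{a},\overline{k},\ldots,\overline{k},\overline{b}) = A^{p} + \alpha\,A^{p-1}E + \beta\,E A^{p-1} + \alpha\beta\,E A^{p-2}E. \]
Since $EXE=X_{11}E$ and left (resp.\ right) multiplication by $E$ extracts the first row (resp.\ column), every term becomes explicit once one knows $A^{p}$, $A^{p-1}$ and the $(1,1)$-entry of $A^{p-2}$. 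Using $A^{-1}=\begin{pmatrix} 0 & 1 \\ -1 & \overline{k}\end{pmatrix}$ and $A^{2}=\begin{pmatrix}\overline{k}^{2}-1 & -\overline{k}\\ \overline{k} & -1\end{pmatrix}$, these are read off from $A^{nm}=\eta\,Id$ in each regime.

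I would then substitute $p=nm,\,nm+1,\,nm+2$ in turn. For $p=nm$ one gets $M_{p}=\eta\begin{pmatrix} 1-\alpha\beta & \beta \\ -\alpha & 1\end{pmatrix}$, so $M_{p}=\pm Id$ forces the off-diagonal entries to vanish, i.e.\ $\alpha=\beta=0$, whence $\overline{a}=\overline{b}=\overline{k}$; this is (i). For $p=nm+1$ the same substitution gives $M_{p}=\eta\begin{pmatrix} \overline{k}+\alpha+\beta & -1\\ 1 & 0\end{pmatrix}$, whose off-diagonal entries $\pm\eta$ are nonzero units, so $M_{p}$ can never be a scalar matrix and no such solution exists; this is (ii). For $p=nm+2$ one obtains $M_{p}=\eta\begin{pmatrix} \ast & -\overline{k}-\beta \\ \overline{k}+\alpha & -1\end{pmatrix}$, so the vanishing of the two off-diagonal entries forces $\alpha=\beta=-\overline{k}$, i.e.\ $\overline{a}=\overline{b}=\overline{0}$; evaluating the $(1,1)$-entry at these values returns $-1$, confirming $M_{p}=-\eta\,Id$, and this is (iii).

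There is no serious obstacle here: the whole argument is a direct computation, and the only care needed is bookkeeping at the extremes. One must check that the expansion is valid for all admissible sizes ($p\geq 2$, i.e.\ $m\geq 1$, using $n\geq 2$ from Proposition \ref{spe}(i)), and dispose of the degenerate case $m=0$ separately, where (i) is vacuous, (ii) follows from Proposition \ref{spe}(i), and (iii) from Proposition \ref{spe}(ii). Note that the computation never invokes Proposition \ref{31}, though the latter independently confirms the equality $\overline{a}=\overline{b}$ obtained above.
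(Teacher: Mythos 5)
Your proof is correct, and it takes a genuinely different (though closely related) route from the paper's. Both arguments run on the same engine: the defining fact $M_{nm}(\overline{k},\ldots,\overline{k})=A^{nm}=\pm Id$ collapses the long product and reduces everything to explicit computations with $A^{\pm 1}$, $A^{\pm 2}$; and both dispose of $m=0$ via Proposition \ref{spe}. The difference is in the organization. The paper first invokes Proposition \ref{31} to obtain $\overline{a}=\overline{b}$, then uses invariance under $\sim$ to pass to the cyclic representative $(\overline{k},\ldots,\overline{k},\overline{a},\overline{a})$, strips off $M_{n}(\overline{k},\ldots,\overline{k})^{m-1}=\pm Id$ and then $(n-2)$ further factors of $M_{1}(\overline{k})$, landing on the comparison $M_{2+l}(\overline{k},\ldots,\overline{k},\overline{a},\overline{a})=\overline{\mu}\,M_{2}(\overline{k},\overline{k})$, treated case by case for $l=0,1,2$. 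You instead expand $(A+\beta E)A^{p-2}(A+\alpha E)$ directly via the rank-one matrix $E$, so the equality $\overline{a}=\overline{b}$ and its value both fall out of the vanishing of the two off-diagonal entries rather than being imported: neither Proposition \ref{31} nor cyclic equivalence is ever used, as you note. What each buys: the paper's route stays within its calculus of the $M_{i}$'s and reuses already-established results, keeping the matrix algebra to a minimum; yours is more self-contained and treats the three sizes uniformly by a single expansion, at the cost of a little more bookkeeping, which you handle correctly ($p\geq 2$ because $n\geq 2$ by Proposition \ref{spe} i); $\eta=\pm\overline{1}$ is a unit, so the off-diagonal entries force $\alpha=\beta=\overline{0}$ in case (i) and $\alpha=\beta=-\overline{k}$ in case (iii), and $\pm\eta\neq\overline{0}$ since $N\geq 2$ rules out case (ii)). I checked your three displayed matrices and the final $(1,1)$-entry evaluation $\overline{k}^{2}-\overline{1}-2\overline{k}^{2}+\overline{k}^{2}=-\overline{1}$; all are correct.
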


\begin{proof}

Si $m=0$ alors i) est sans objet, ii) est vrai car \eqref{p} n'a pas de solution de taille 1 et iii) est vrai car $(\overline{0},\overline{0})$ est la seule solution de \eqref{p} de taille 2. On suppose donc maintenant $m \geq 1$. Si $(\overline{a},\overline{k},\ldots,\overline{k},\overline{b}) \in (\mathbb{Z}/N\mathbb{Z})^{nm+l}$ est une solution de \eqref{p} avec $l \leq 2$. On sait déjà que $\overline{a}=\overline{b}$ (proposition \ref{31}) et $n \geq 2$. Il existe $\alpha \in \{\pm 1\}$ tel que  
\[\overline{\alpha}Id= M_{n}(\overline{k},\ldots,\overline{k}).\]
\noindent Puisque $(\overline{k},\ldots,\overline{k},\overline{a},\overline{a}) \in (\mathbb{Z}/N\mathbb{Z})^{nm+l}$ est une solution de \eqref{p}, il existe $\epsilon \in \{\pm 1\}$ tel que  
\begin{eqnarray*}
\overline{\epsilon}Id &=& M_{nm+l}(\overline{k},\ldots,\overline{k},\overline{a},\overline{a}) \\
                      &=& M_{n+l}(\overline{k},\ldots,\overline{k},\overline{a},\overline{a})M_{n(m-1)}(\overline{k},\ldots,\overline{k}) \\
											&=& M_{n+l}(\overline{k},\ldots,\overline{k},\overline{a},\overline{a})M_{n}(\overline{k},\ldots,\overline{k})^{m-1} \\
											&=& \overline{\alpha}^{m-1} M_{n+l}(\overline{k},\ldots,\overline{k},\overline{a},\overline{a}). \\
\end{eqnarray*}

\noindent Ainsi, il existe $\mu \in \{\pm 1\}$ tel que  
\[M_{n+l}(\overline{k},\ldots,\overline{k},\overline{a},\overline{a})=\overline{\mu} M_{n}(\overline{k},\ldots,\overline{k}).\]
En multipliant cette égalité $(n-2)$ fois à droite par l'inverse de $M_{1}(\overline{k})$, on obtient 
\[M_{2+l}(\overline{k},\ldots,\overline{k},\overline{a},\overline{a})=\overline{\mu} M_{2}(\overline{k},\overline{k}).\]
	
\noindent i) Si $l=0$, on a  \[M_{2}(\overline{a},\overline{a})= \begin{pmatrix}
   \overline{a^{2}-1} & -\overline{a} \\
    \overline{a}    & \overline{-1} 
   \end{pmatrix}=\overline{\mu} \begin{pmatrix}
   \overline{k^{2}-1} & -\overline{k} \\
    \overline{k}    & \overline{-1} 
   \end{pmatrix}= \overline{\mu} M_{2}(\overline{k},\overline{k}).\]
	
\noindent Donc, $\overline{\mu}=\overline{1}$ et $\overline{a}=\overline{k}$.
\\
\\ii) Si $l=1$, on a  \[M_{3}(\overline{k},\overline{a},\overline{a})= \overline{\mu} M_{2}(\overline{k},\overline{k}).\]

\noindent En multipliant à droite par l'inverse de $M_{1}(\overline{k})$, on obtient 

\[M_{2}(\overline{a},\overline{a})= \begin{pmatrix}
   \overline{a^{2}-1} & \overline{-a} \\
    \overline{a}    & \overline{-1} 
   \end{pmatrix}=\overline{\mu} \begin{pmatrix}
   \overline{k} & -\overline{1} \\
    \overline{1}    & \overline{0} 
   \end{pmatrix}= \overline{\mu} M_{1}(\overline{k}).\]
	
\noindent Ainsi, $\overline{-1}=\overline{0}$ ce qui est absurde puisque $N \geq 2$. Donc, il n'y a pas de solution de \eqref{p} de la forme $(\overline{a},\overline{k},\ldots,\overline{k},\overline{b})$ de taille $nm+1$.
\\
\\iii) Si $l=2$, on a  \[M_{4}(\overline{k},\overline{k},\overline{a},\overline{a})= \overline{\mu} M_{2}(\overline{k},\overline{k}).\] 

\noindent En multipliant 2 fois à droite par l'inverse de $M_{1}(\overline{k})$, on obtient $M_{2}(\overline{a},\overline{a})= \overline{\mu} Id$. Donc, $\overline{\mu}=\overline{-1}$ et $\overline{a}=\overline{0}$.

\end{proof}

\begin{remark}
{\rm Dans la preuve ci-dessus, on a utilisé les deux formules suivantes qui découlent directement de la définition des matrices $M_{n}(\overline{a_{1}},\ldots,\overline{a_{n}})$ :
\begin{itemize}
\item $M_{nm+l}(\overline{k},\ldots,\overline{k},\overline{a},\overline{a})=M_{n+l}(\overline{k},\ldots,\overline{k},\overline{a},\overline{a})M_{n(m-1)}(\overline{k},\ldots,\overline{k})$;
\item $M_{n(m-1)}(\overline{k},\ldots,\overline{k})=M_{n}(\overline{k},\ldots,\overline{k})^{m-1}$.
\end{itemize} 

\noindent Ces dernières sont intéressantes à noter car elles seront utiles dans la suite.
}

\end{remark}

\subsection{Cas où $k$ est premier avec $p$}
\label{pre}

On va démontrer ici que la solution $\overline{k}$-monomiale minimale de \eqref{p} est irréductible lorsque $p$ ne divise pas $k$. On commence par un résultat intermédiaire.

\begin{lemma}
\label{34}

Soient $p \in \mathbb{P}$, $n \in \mathbb{N}^{*}$ et $N=p^{n}$. Soit $k \in \mathbb{Z}$ premier avec $p$. $\overline{0}$ et $\overline{k}$ sont les seules racines de $X(X-\overline{k})$.

\end{lemma}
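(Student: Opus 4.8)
The plan is to show that the two obvious roots are the only ones, using crucially that $k$ is coprime to $p$. First I would record that $\overline{0}$ and $\overline{k}$ are indeed roots of $X(X-\overline{k})$, since $\overline{0}\,(\overline{0}-\overline{k})=\overline{0}$ and $\overline{k}\,(\overline{k}-\overline{k})=\overline{0}$. The real content is the converse inclusion: every root must coincide with one of these two.

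So I would take a root $\overline{x}$, which means $x(x-k)\equiv 0 \pmod{p^{n}}$, equivalently $p^{n}\mid x(x-k)$. The key observation is that $p$ cannot divide both $x$ and $x-k$ at once: if it did, it would divide their difference $x-(x-k)=k$, contradicting the hypothesis $\gcd(k,p)=1$. Hence at most one of the integers $x$ and $x-k$ is divisible by $p$.

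From here I would invoke the additivity of the $p$-adic valuation $v_{p}$: we have $v_{p}(x(x-k))=v_{p}(x)+v_{p}(x-k)\geq n$, while the previous step forces at least one of the two summands to vanish. Consequently the other summand is at least $n$, so either $p^{n}\mid x$, giving $\overline{x}=\overline{0}$, or $p^{n}\mid (x-k)$, giving $\overline{x}=\overline{k}$. This exhausts the possibilities and closes the argument.

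I do not expect any genuine obstacle: the statement is elementary, and the single delicate point is the coprimality step, which is precisely where the assumption $p\nmid k$ is used and which would fail for a general modulus $N$ (where $X(X-\overline{k})$ can acquire extra roots). If one prefers to avoid valuations, the same reasoning can be phrased by writing the exact powers $p^{a}\,\|\,x$ and $p^{b}\,\|\,(x-k)$, observing $\min(a,b)=0$ and $a+b\geq n$, and concluding that whichever of $a,b$ is nonzero must already be at least $n$; but the valuation formulation is the cleanest.
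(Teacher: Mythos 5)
Your proof is correct, and it takes a genuinely different (if closely related) route from the paper's. The paper argues by cases on whether the root $\overline{a}$ is invertible in $\mathbb{Z}/p^{n}\mathbb{Z}$: if it is, cancellation gives $\overline{a}=\overline{k}$; if not, it writes $a=ip^{m}$ with $1\leq m\leq n-1$ and $p\nmid i$, deduces $p^{n-m}\mid i(ip^{m}-k)$, applies Gauss's lemma to get $p^{n-m}\mid ip^{m}-k$, and reaches the contradiction $p\mid k$, so that the non-invertible case forces $\overline{a}=\overline{0}$. You instead note up front that $p$ cannot divide both $x$ and $x-k$ (their difference being $k$), and conclude by additivity of the $p$-adic valuation in $v_{p}(x)+v_{p}(x-k)\geq n$ that $p^{n}$ must divide one factor entirely. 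Both arguments pivot on exactly the same use of the hypothesis $\gcd(k,p)=1$, but yours treats $\overline{0}$ and $\overline{k}$ symmetrically, avoids the explicit decomposition $a=ip^{m}$, and needs no appeal to Gauss's lemma; the paper's is marginally more elementary in that it speaks only of divisibility, never of valuations. The single point to make explicit in your write-up is the degenerate case $x(x-k)=0$ in $\mathbb{Z}$ (i.e.\ $x=0$ or $x=k$), which is covered either by the convention $v_{p}(0)=+\infty$ or by observing that these cases give $\overline{x}\in\{\overline{0},\overline{k}\}$ directly.
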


\begin{proof}

$\overline{0}$ et $\overline{k}$ sont racines de $X(X-\overline{k})$. Soit $\overline{a} \in \mathbb{Z}/N\mathbb{Z}$ tel que $\overline{a}(\overline{a}-\overline{k})=\overline{0}$.
\\
\\Si $\overline{a}$ est un élément inversible de $\mathbb{Z}/N\mathbb{Z}$. On a \[\overline{a}(\overline{a}-\overline{k})=\overline{0} \Longleftrightarrow (\overline{a}-\overline{k})=\overline{0} \Longleftrightarrow \overline{a}=\overline{k}.\] 

\noindent Si $\overline{a}$ n'est pas un élément inversible de $\mathbb{Z}/N\mathbb{Z}$. Il existe un entier $m$ dans $[\![1;n-1]\!]$ et un entier $i$ dans $[\![0;p^{n-m}]\!]$ tels que $\overline{a}=\overline{ip^{m}}$. Si $i=0$ alors $\overline{a}=\overline{0}$. On suppose maintenant $i \neq 0$ et $p$ ne divise pas $i$.
\\
\\On a \[\overline{a}(\overline{a}-\overline{k})=\overline{ip^{m}}(\overline{ip^{m}}-\overline{k})=\overline{0}.\] Ainsi, $p^{n}$ divise $ip^{m}(ip^{m}-k)$ et donc $p^{n-m}$ divise $i(ip^{m}-k)$. Comme $p^{n-m}$ et $i$ sont premiers entre eux, on a, par le lemme de Gauss, $p^{n-m}$ divise $ip^{m}-k$. En particulier, $p$ divise $ip^{m}-k$. Donc, $p$ divise $k$ ce qui est absurde.
\\
\\Donc, $\overline{a} \in \{\overline{0},\overline{k}\}$.
\\
\\Ainsi, $\overline{0}$ et $\overline{k}$ sont les seules racines de $X(X-\overline{k})$.

\end{proof}

\noindent Cela nous permet d'avoir le résultat souhaité :

\begin{proposition}
\label{35}

Soient $p \in \mathbb{P}$, $n \in \mathbb{N}^{*}$ et $N=p^{n}$. Si $\overline{k}$ est inversible dans $\mathbb{Z}/N\mathbb{Z}$ alors la solution $\overline{k}$-monomiale minimale de \eqref{p} est irréductible.

\end{proposition}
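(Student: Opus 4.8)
L'idée est de raisonner par l'absurde en exploitant la rigidité d'un uplet constant sous la relation $\sim$. Supposons que la solution $\overline{k}$-monomiale minimale $(\overline{k},\ldots,\overline{k})$, de taille $n$, soit réductible. D'après la définition \ref{23}, il existe des solutions $(\overline{a_{1}},\ldots,\overline{a_{m}})$ et $(\overline{b_{1}},\ldots,\overline{b_{l}})$ avec $m,l \geq 3$ et $(\overline{k},\ldots,\overline{k}) \sim (\overline{a_{1}},\ldots,\overline{a_{m}}) \oplus (\overline{b_{1}},\ldots,\overline{b_{l}})$; le premier facteur est automatiquement une solution grâce à la compatibilité de $\oplus$ avec \eqref{p} rappelée après la définition \ref{21}. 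Comme un uplet constant est invariant par toute permutation circulaire et par renversement (définition \ref{22}), tout uplet $\sim$-équivalent à $(\overline{k},\ldots,\overline{k})$ lui est égal; ainsi $(\overline{a_{1}},\ldots,\overline{a_{m}}) \oplus (\overline{b_{1}},\ldots,\overline{b_{l}}) = (\overline{k},\ldots,\overline{k})$.

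J'identifierais ensuite les composantes de cette somme à l'aide de la définition \ref{21}. Les entrées $\overline{a_{2}},\ldots,\overline{a_{m-1}}$ et $\overline{b_{2}},\ldots,\overline{b_{l-1}}$ valent toutes $\overline{k}$, de sorte que les deux facteurs s'écrivent $(\overline{a_{1}},\overline{k},\ldots,\overline{k},\overline{a_{m}})$ et $(\overline{b_{1}},\overline{k},\ldots,\overline{k},\overline{b_{l}})$; de plus, les deux entrées de jonction fournissent $\overline{a_{1}+b_{l}}=\overline{k}$ et $\overline{a_{m}+b_{1}}=\overline{k}$. En appliquant la proposition \ref{31} à chaque facteur (chacun étant une solution de taille $\geq 3$), on obtient $\overline{a_{1}}=\overline{a_{m}}$ avec $\overline{a_{1}}(\overline{a_{1}}-\overline{k})=\overline{0}$, et de même $\overline{b_{1}}=\overline{b_{l}}$ avec $\overline{b_{1}}(\overline{b_{1}}-\overline{k})=\overline{0}$. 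C'est ici qu'intervient l'hypothèse d'inversibilité de $\overline{k}$: d'après le lemme \ref{34}, les seules racines de $X(X-\overline{k})$ sont $\overline{0}$ et $\overline{k}$, donc $\overline{a_{1}},\overline{b_{1}} \in \{\overline{0},\overline{k}\}$.

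La conclusion repose sur la relation de jonction combinée à un décompte de tailles. En utilisant $\overline{a_{m}}=\overline{a_{1}}$ et $\overline{b_{l}}=\overline{b_{1}}$, la jonction devient $\overline{a_{1}}+\overline{b_{1}}=\overline{k}$. Comme $\overline{k}$ est inversible, on a $\overline{k}\neq\overline{0}$ et $\overline{2k}\neq\overline{k}$, ce qui élimine les cas $(\overline{a_{1}},\overline{b_{1}})=(\overline{0},\overline{0})$ et $(\overline{a_{1}},\overline{b_{1}})=(\overline{k},\overline{k})$; ainsi l'un exactement de $\overline{a_{1}},\overline{b_{1}}$ vaut $\overline{k}$ et l'autre $\overline{0}$. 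Si $\overline{b_{1}}=\overline{k}$, alors le facteur $(\overline{b_{1}},\ldots,\overline{b_{l}})=(\overline{k},\ldots,\overline{k})$ est une solution $\overline{k}$-monomiale de taille $l=n-m+2\leq n-1$ (car $m\geq 3$), ce qui contredit la minimalité de $n$; le cas $\overline{a_{1}}=\overline{k}$ est symétrique. Cette contradiction montre qu'aucune réduction n'est possible. Je m'attends à ce que les seuls points délicats soient le décompte des composantes de $\oplus$ et la remarque que l'uplet constant est $\sim$-rigide; le cœur de l'argument est encapsulé dans le lemme \ref{34}, précisément là où l'on utilise le fait que $k$ est premier avec $p$, de sorte que la preuve est courte une fois ce lemme établi.
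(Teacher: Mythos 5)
Votre démonstration est correcte et repose sur le même noyau que celle de l'article : raisonnement par l'absurde, identification des composantes de la somme avec le uplet constant (la rigidité pour $\sim$, que vous explicitez, est laissée implicite dans l'article), puis proposition \ref{31} et lemme \ref{34} pour placer les extrémités des facteurs dans $\{\overline{0},\overline{k}\}$, et enfin contradiction avec la minimalité. La divergence réelle est dans la manière de conclure. L'article n'analyse qu'un seul des deux facteurs et distingue deux cas demandant des arguments différents : si ses extrémités valent $\overline{0}$, il fait appel à une réduction auxiliaire, $(\overline{0},\overline{k},\ldots,\overline{k},\overline{0}) \sim (\overline{k},\ldots,\overline{k}) \oplus (\overline{0},\overline{0},\overline{0},\overline{0})$, pour exhiber une solution $\overline{k}$-monomiale plus courte ; si elles valent $\overline{k}$, la minimalité force ce facteur à être de taille au moins celle de la solution minimale, donc l'autre facteur à être de taille au plus $2$, ce qui est absurde. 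Vous traitez au contraire les deux facteurs symétriquement et exploitez les relations de jonction $\overline{a_{1}+b_{l}}=\overline{a_{m}+b_{1}}=\overline{k}$, que l'article n'utilise jamais : combinées à la proposition \ref{31}, elles donnent $\overline{a_{1}}+\overline{b_{1}}=\overline{k}$, et l'inversibilité de $\overline{k}$ exclut d'un coup les configurations $(\overline{0},\overline{0})$ et $(\overline{k},\overline{k})$ ; le facteur identiquement égal à $\overline{k}$ est alors de taille strictement inférieure à celle de la solution minimale, contradiction immédiate. Votre variante est plus symétrique et économise la réduction auxiliaire par $(\overline{0},\overline{0},\overline{0},\overline{0})$ ; celle de l'article a pour elle de ne rien demander sur le second facteur, schéma qu'elle réutilise ailleurs. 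Un seul détail de rédaction à corriger : vous notez $n$ la taille de la solution minimale, alors que $n$ désigne déjà l'exposant dans $N=p^{n}$ ; il faut choisir une autre lettre.
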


\begin{proof}

Soit $\overline{k}$ un élément inversible de $\mathbb{Z}/N\mathbb{Z}$. Soit $m \in \mathbb{N}^{*}$ tel que $(\overline{k},\ldots,\overline{k}) \in (\mathbb{Z}/N\mathbb{Z})^{m}$ soit monomiale minimale. On suppose par l'absurde que cette solution peut s'écrire comme une somme de deux solutions non triviales.
\\
\\Il existe $(\overline{a_{1}},\ldots,\overline{a_{l}})$ et $(\overline{b_{1}},\ldots,\overline{b_{l'}})$ solutions de \eqref{p} différentes de $(\overline{0},\overline{0})$ avec $l+l'=m+2$ et $l,l' \geq 3$ telles que \[(\overline{k},\ldots,\overline{k})=(\overline{b_{1}+a_{l}},\overline{b_{2}},\ldots,\overline{b_{l'-1}},\overline{b_{l'}+a_{1}},\overline{a_{2}},\ldots,\overline{a_{l-1}}).\] On a donc $\overline{a_{2}}=\ldots=\overline{a_{l-1}}=\overline{k}$. Comme $(\overline{a_{1}},\ldots,\overline{a_{l}})$ est solution de \eqref{p}, on a, par la proposition \ref{31}, $\overline{a_{1}}=\overline{a_{l}}=\overline{a}$ avec $\overline{a}(\overline{a}-\overline{k})=\overline{0}$. Par le lemme précédent, on a $\overline{a}=\overline{0}$ ou $\overline{a}=\overline{k}$. 
\\
\\Si $\overline{a}=\overline{0}$ alors \[(\overline{0},\overline{a_{2}},\ldots,\overline{a_{l-1}},\overline{0}) \sim (\overline{a_{2}},\ldots,\overline{a_{l-1}}) \oplus (\overline{0},\overline{0},\overline{0},\overline{0}).\]  Comme $(\overline{0},\overline{a_{2}},\ldots,\overline{a_{l-1}},\overline{0})$ et $(\overline{0},\overline{0},\overline{0},\overline{0})$ sont solutions de \eqref{p}, $(\overline{a_{2}},\ldots,\overline{a_{l-1}})=(\overline{k},\ldots,\overline{k}) \in (\mathbb{Z}/N\mathbb{Z})^{l-2}$ est solution de \eqref{p}, ce qui contredit la minimalité de la solution. 
\\
\\Ainsi, $\overline{a}=\overline{k}$. Par minimalité de la solution, on a $l \geq m$ ce qui implique $l' \leq 2$. Donc, $l'=2$ et $(\overline{b_{1}},\ldots,\overline{b_{l'}})=(\overline{0},\overline{0})$ ce qui est absurde.

\end{proof}

\begin{remarks}

{\rm i) Si $N=p \in \mathbb{P}$ alors tous les éléments non nuls de $\mathbb{Z}/N\mathbb{Z}$ sont inversibles. Donc, la proposition \ref{35} implique le théorème \ref{25}.
\\
\\ii) Ce résultat combiné avec le théorème \ref{32} nous permet de retrouver que toutes les solutions monomiales minimales non nulles de $(E_{4})$ et $(E_{8})$ sont irréductibles.
\\
\\iii) La solution $\overline{k}$-monomiale minimale de $(E_{p^{n}})$ est de taille inférieure à $3p^{n}$ (voir \cite{M2} Théorème 3.3).
\\
\\iv) Si $N$ a plus d'un diviseur premier alors le résultat n'est, en général, plus valide. Par exemple, prenons $N=10$ et $k=3$. $k$ et $N$ sont premiers entre eux mais la solution $\overline{3}$-monomiale minimale de $(E_{10})$ est réductible. En effet, celle-ci est de taille 15 et on peut l'écrire comme une somme à l'aide de la solution $(\overline{8},\overline{3},\overline{3},\overline{3},\overline{8})$.
}

\end{remarks}

\subsection{Cas où $N$ est la puissance d'un nombre premier impair}
\label{imp}

On souhaite ici terminer la preuve du premier point du théorème \ref{26}. Pour effectuer cela, il reste à considérer la situation où $k=ap^{m}$ avec $a$ premier avec $p$. Avant de s'intéresser à celle-ci, on donne quelques résultats préliminaires :

\begin{lemma}
\label{36}

i) (\cite{M}, lemme 3.13]) Soient $(n,l) \in \mathbb{N}^{2}$, $n \geq 2$, $l \geq 2$ et $j \in [\![1;n-1]\!]$. $l^{n-j}$ divise $\binom{l^{n-1}}{j}$.
\\
\\ii) (\cite{M2},lemme 3.11) Soient $(n,l) \in \mathbb{N}^{2}$, $n \geq 3$, $l \geq 2$ et $j \in [\![2;n-1]\!]$. $l^{n-j}$ divise $\binom{2l^{n-2}}{j}$.

\end{lemma}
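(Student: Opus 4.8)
The plan is to prove both divisibilities by passing to the $p$-adic valuation $v_{p}$, prime by prime, and then reassembling. Writing $l=\prod_{p\mid l}p^{e_{p}}$ with $e_{p}=v_{p}(l)\geq 1$, one has $l^{n-j}=\prod_{p\mid l}p^{(n-j)e_{p}}$, so it suffices to establish, for every prime $p$ dividing $l$, the lower bound $v_{p}\binom{l^{n-1}}{j}\geq (n-j)e_{p}$ for i) and $v_{p}\binom{2l^{n-2}}{j}\geq (n-j)e_{p}$ for ii). Primes not dividing $l$ play no role, since they do not appear in $l^{n-j}$. Before starting I would check that the binomials are genuine, i.e. $j\leq M$: as $l\geq 2$ one has $l^{n-1}\geq 2^{n-1}\geq n>n-1\geq j$ in case i), and $2l^{n-2}\geq 2^{n-1}\geq n>j$ in case ii).

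The main tool is the absorption identity $j\binom{M}{j}=M\binom{M-1}{j-1}$, valid for $1\leq j\leq M$, combined with the fact that a binomial coefficient has nonnegative valuation. For part i), taking $v_{p}$ of this identity with $M=l^{n-1}$ gives
\[
v_{p}\binom{l^{n-1}}{j}=(n-1)e_{p}+v_{p}\binom{l^{n-1}-1}{j-1}-v_{p}(j)\geq (n-1)e_{p}-v_{p}(j),
\]
so the claim reduces to the elementary inequality $v_{p}(j)\leq (j-1)e_{p}$. Since $e_{p}\geq 1$ it is enough to prove $v_{p}(j)\leq j-1$, which follows from $j\geq p^{v_{p}(j)}\geq 2^{v_{p}(j)}\geq v_{p}(j)+1$.

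For part ii), the same identity with $M=2l^{n-2}$ yields
\[
v_{p}\binom{2l^{n-2}}{j}\geq v_{p}(2)+(n-2)e_{p}-v_{p}(j),
\]
so I must establish $v_{p}(2)+(j-2)e_{p}\geq v_{p}(j)$. The boundary case $j=2$ is exactly where the argument is tightest: the inequality becomes the equality $v_{p}(2)\geq v_{p}(2)$, and this is the only place where the extra factor $2$ in $2l^{n-2}$ is needed; indeed the direct computation $\binom{2l^{n-2}}{2}=l^{n-2}(2l^{n-2}-1)$ shows $v_{p}$ equals exactly $(n-2)e_{p}$, because $2l^{n-2}-1\equiv -1\pmod{p}$. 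For $j\geq 3$ one discards the nonnegative term $v_{p}(2)$ and proves $v_{p}(j)\leq j-2$ by the same estimate as above, handling $v_{p}(j)\in\{0,1\}$ directly and using $j\geq 2^{v_{p}(j)}\geq v_{p}(j)+2$ when $v_{p}(j)\geq 2$.

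The step I expect to be the genuine obstacle is the bookkeeping around the factor $j$ appearing in the denominator of the absorption identity: since $j$ may share prime factors with $l$, the loss $v_{p}(j)$ must be controlled, and the delicate point is precisely the tight case $j=2$, $p=2$ of part ii), where only the presence of the factor $2$ keeps the bound from failing. Everything else is routine once the reduction to prime-by-prime valuations is in place.
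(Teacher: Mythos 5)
Your proof is correct: the reduction to $p$-adic valuations prime by prime is sound, the absorption identity $j\binom{M}{j}=M\binom{M-1}{j-1}$ gives exactly the bounds you state, and the elementary estimates $v_{p}(j)\leq j-1$ (for part i)) and $v_{p}(j)\leq j-2$ when $j\geq 3$, together with the tight case $j=2$, $p=2$ (for part ii)), close the argument. Note, however, that there is no proof in the paper to compare against: Lemma \ref{36} is stated there as a quotation of Lemme 3.13 of \cite{M} and Lemme 3.11 of \cite{M2}, and the author does not reprove it. The closest internal point of comparison is the pair of Lemmas \ref{315} and \ref{316}, which the paper does prove and which concern the same kind of divisibility for $\binom{2^{n-1}}{j}$: there the author also starts from the absorption identity (in the form $\binom{n}{k}=\frac{n}{k}\binom{n-1}{k-1}$, plus Gauss's lemma, giving $\frac{n}{\gcd(n,k)}\mid\binom{n}{k}$) and then bounds the loss coming from $j$ by an inequality of the type $2^{j-2}\geq j$. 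Your argument is the same idea, but organized through valuations rather than through $\gcd$'s; this formulation has the advantage of handling an arbitrary composite $l$ uniformly, whereas the paper's Lemma \ref{315} route would require the extra (easy) step $\gcd(l^{n-1},j)\mid l^{j-1}$, checked prime by prime anyway. So your proposal is a legitimate, self-contained proof of the quoted lemma, in the spirit of the techniques the paper uses elsewhere.
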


\begin{lemma}
\label{37}

Soient $x \in \mathbb{R}$, $x \geq 2$ et $n \in \mathbb{N}^{*}$. On a
\\i) $x^{n} >n$;
\\ii) $2x^{n-1} >n$.

\end{lemma}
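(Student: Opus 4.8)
The plan is to reduce both inequalities to the single elementary estimate $2^{n}>n$, valid for every $n\in\mathbb{N}^{*}$, and then to exploit the monotonicity of the map $t\mapsto t^{m}$ on $[0,+\infty)$ together with the hypothesis $x\geq 2$. This keeps the argument uniform for the two points and isolates the one genuinely inductive ingredient.

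First I would establish $2^{n}>n$ for all $n\geq 1$ by induction on $n$. The base case $n=1$ reads $2>1$. For the inductive step, assuming $2^{n}>n$ (hence $2^{n}\geq n+1$, since both sides are integers), one has $2^{n+1}=2\cdot 2^{n}\geq 2(n+1)=2n+2>n+1$, which closes the induction.

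Next, for point i), since $x\geq 2$ and $t\mapsto t^{n}$ is non-decreasing on $[0,+\infty)$, one gets $x^{n}\geq 2^{n}$, and combining with the estimate above yields $x^{n}\geq 2^{n}>n$. For point ii), the same monotonicity applied to the exponent $n-1\geq 0$ gives $x^{n-1}\geq 2^{n-1}$, whence $2x^{n-1}\geq 2\cdot 2^{n-1}=2^{n}>n$; the edge case $n=1$ causes no trouble, as it reduces to $2x^{0}=2>1$.

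There is no real obstacle here: the statement is a routine real-analysis inequality, and the only minor point requiring attention is the boundary exponent $n-1=0$ in ii), which is absorbed by reading $x^{0}=1=2^{0}$. Alternatively, each part could be proved by a direct induction on $n$, using $x^{n+1}=x\cdot x^{n}\geq 2x^{n}$ for i) and $2x^{n}=x\cdot 2x^{n-1}\geq 2\cdot 2x^{n-1}$ for ii); this avoids invoking monotonicity but merely duplicates the same inductive mechanism, so I would favour the reduction to $2^{n}>n$.
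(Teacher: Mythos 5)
Your proof is correct. It is organized differently from the paper's: the paper proves each point by a direct induction on $n$ with $x$ kept generic — for i), $x^{n+1}=x\cdot x^{n}>xn\geq 2n\geq n+1$, and for ii), $2x^{n}=x\,(2x^{n-1})>xn\geq 2n\geq n+1$ — whereas you isolate the single integer inequality $2^{n}>n$, prove that by induction, and then deduce both points from the monotonicity of $t\mapsto t^{m}$ on $[0,+\infty)$ together with $x\geq 2$. The inductive engine is the same in both cases (multiplication by at least $2$ beats adding $1$), and indeed the alternative you sketch in your closing remark is exactly the paper's proof. What your factorization buys is economy: one induction instead of two, with both parts becoming one-line corollaries, and the edge case $n-1=0$ in ii) handled transparently. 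What the paper's version buys is self-containedness: each step is a bare computation, with no appeal to monotonicity of powers as a separate fact. Both arguments are complete; the difference is purely one of presentation.
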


\begin{proof}

i) On procède par récurrence sur $n$. $x^{1}=x \geq 2 >1$. Supposons qu'il existe un $n \in \mathbb{N}^{*}$ tel que $x^{n} >n$. $x^{n+1}=xx^{n}>xn \geq 2n \geq n+1$. Le résultat est démontré par récurrence.
\\
\\ii) On procède par récurrence sur $n$. $2x^{0}=2 >1$. Supposons qu'il existe un $n \in \mathbb{N}^{*}$ tel que $2x^{n-1} >n$. 
\\$2x^{n}=x(2x^{n-1})>xn \geq 2n \geq n+1$. Le résultat est démontré par récurrence.

\end{proof}

\begin{lemma}
\label{38}

$\forall (j,m) \in (\mathbb{N}^{*})^{2}$, $jm-j-m+1 \geq 0$.

\end{lemma}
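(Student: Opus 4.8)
The plan is to recognise that the expression $jm - j - m + 1$ factors completely. Grouping terms gives $jm - j - m + 1 = j(m-1) - (m-1) = (j-1)(m-1)$, so the inequality to be established reduces to showing that this single product is nonnegative.

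Once the factorisation is in hand, the conclusion is immediate. Since $(j,m) \in (\mathbb{N}^{*})^{2}$, both $j$ and $m$ are at least $1$, hence $j-1 \geq 0$ and $m-1 \geq 0$; the product of two nonnegative integers is nonnegative, which yields $jm - j - m + 1 = (j-1)(m-1) \geq 0$ at once. There is no genuine obstacle in this argument: the only step requiring a moment's thought is spotting the factorisation, after which no case analysis or estimation is needed.

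For completeness, I would note that one could instead argue by a double induction on $j$ and $m$, in the spirit of the inductions used for Lemma \ref{37}, but this is far more cumbersome than the factorisation and brings no additional insight. I therefore plan to present only the one-line factorisation argument.
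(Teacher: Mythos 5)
Your proof is correct, but it takes a genuinely different route from the paper. The paper proves the lemma by induction on $j$: the base case $j=1$ gives $jm-j-m+1 = 0$ exactly, and the inductive step rewrites $(j+1)m-(j+1)-m+1 = (jm-j-m+1) + m - 1 \geq m-1 \geq 0$, mirroring the inductive style used for Lemma \ref{37} (and later for Lemma \ref{312} i), which handles the analogous inequality $jm-j-m\geq 0$ for $j,m\geq 2$). Your factorisation $jm-j-m+1 = (j-1)(m-1)$ collapses the whole argument to one line, and it buys something extra: it shows immediately that equality holds precisely when $j=1$ or $m=1$, and the same argument works verbatim for real $j,m \geq 1$, not just positive integers. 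The paper's induction, by contrast, keeps the presentation uniform with the neighbouring lemmas but obscures this structure. Note also that the same factorisation trick handles Lemma \ref{312} i), since $jm-j-m = (j-1)(m-1)-1 \geq 0$ as soon as $j,m\geq 2$ force $(j-1)(m-1)\geq 1$; so your observation is strictly more efficient there too.
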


\begin{proof}

On procède par récurrence sur $j$.
\\
\\Si $j=1$ alors, pour tout $m \in \mathbb{N}^{*}$, $jm-j-m+1=0$. 
\\
\\Supposons qu'il existe un $j \in \mathbb{N}^{*}$ tel que, pour tout $m \in \mathbb{N}^{*}$, $jm-j-m+1 \geq 0$. On a pour tout $m \in \mathbb{N}^{*}$ :
\[(j+1)m-(j+1)-m+1=jm+m-j-1-m+1=(jm-j-m+1)+m-1 \geq m-1 \geq 0.\]
Le résultat est démontré par récurrence.

\end{proof}

\noindent On retourne maintenant à notre problème initial en démontrant le résultat qui suit.

\begin{lemma}
\label{39}

Soient $N=l^{n}$, $l,n \geq 2$, $1 \leq m \leq n-1$ et $a$ un entier alors $(\overline{al^{m}},\ldots,\overline{al^{m}}) \in (\mathbb{Z}/N\mathbb{Z})^{2l^{n-m}}$ est solution de \eqref{p}.

\end{lemma}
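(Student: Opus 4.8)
L'idée est de ramener l'énoncé à un unique calcul matriciel. En posant $k = al^m$ et $A = \begin{pmatrix} k & -1 \\ 1 & 0 \end{pmatrix}$, le produit $M_{2l^{n-m}}(\overline{k}, \ldots, \overline{k})$ n'est autre que $A^{2l^{n-m}}$, de sorte qu'il suffira de montrer que $A^{2l^{n-m}} \equiv \pm Id \pmod{l^n}$. Comme $\det A = 1$ et $\operatorname{tr} A = k$, le théorème de Cayley--Hamilton donne $A^2 = kA - Id = -Id + al^m A$. En regroupant les facteurs deux par deux, on écrira alors $A^{2l^{n-m}} = (A^2)^{l^{n-m}} = (-Id + al^m A)^{l^{n-m}}$.

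Puisque $-Id$ est central, la formule du binôme s'applique à cette dernière expression et fournit
\[A^{2l^{n-m}} = \sum_{j=0}^{l^{n-m}} \binom{l^{n-m}}{j}(-1)^{l^{n-m}-j}(al^m)^j A^j.\]
Le terme d'indice $j = 0$ vaut $(-1)^{l^{n-m}} Id = \pm Id$ ; tout se ramène donc à prouver que chaque terme d'indice $j \geq 1$ est nul modulo $l^n$. Comme $A^j$ est à coefficients entiers, il suffit de vérifier que le scalaire $\binom{l^{n-m}}{j}(al^m)^j$ est divisible par $l^n$ pour tout $j \geq 1$.

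Le cœur de la preuve est ainsi une estimation de la valuation $l$-adique $v_l$, que l'on scindera en deux plages. Pour $1 \leq j \leq n-m$, on appliquera le lemme \ref{36} i) (avec le paramètre $n-m+1$, qui est $\geq 2$ précisément parce que $m \leq n-1$) afin d'obtenir $v_l\!\left(\binom{l^{n-m}}{j}\right) \geq n-m+1-j$ ; combiné à $v_l\!\left((al^m)^j\right) \geq mj$, cela donnera $v_l\!\left(\binom{l^{n-m}}{j}(al^m)^j\right) \geq n + (jm - j - m + 1)$, quantité supérieure ou égale à $n$ d'après le lemme \ref{38}. Pour $n-m+1 \leq j \leq l^{n-m}$, le coefficient binomial n'est plus contrôlé, mais on a ici $v_l\!\left((al^m)^j\right) \geq mj \geq m(n-m+1) \geq n$, la dernière inégalité résultant de $(m-1)(n-m) \geq 0$ ; le facteur $(al^m)^j$ porte donc à lui seul la divisibilité voulue.

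Une fois ces deux plages traitées, tous les termes d'indice $j \geq 1$ disparaissent modulo $l^n$ et l'on obtient $A^{2l^{n-m}} \equiv (-1)^{l^{n-m}} Id \equiv \pm Id$, ce qui est exactement l'affirmation selon laquelle le $2l^{n-m}$-uplet monomial considéré est solution de $(E_N)$. Le seul point véritablement délicat que l'on anticipe est la gestion des valuations dans la première plage : il faut s'assurer que le lemme \ref{36} i) s'applique bien à tous les indices $j$ jusqu'à $n-m$ et que l'exposant $n-m+1-j+mj$ atteint le seuil $n$, ce que garantit précisément le lemme \ref{38}.
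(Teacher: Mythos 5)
Votre preuve est correcte et suit essentiellement la même démarche que celle de l'article : regrouper les facteurs deux par deux pour écrire $M_1(al^m)^2 = -Id + al^m M_1(al^m)$, développer par le binôme, puis annuler les termes d'indice $j \geq 1$ modulo $l^n$ en séparant les plages $1 \leq j \leq n-m$ (lemme \ref{36} i) combiné au lemme \ref{38}) et $j \geq n-m+1$ (où $jm \geq n$). La seule différence est cosmétique : vous obtenez l'identité clé par Cayley--Hamilton là où l'article calcule explicitement le carré et factorise $l^m$, ce qui revient au même puisque $l^m\begin{pmatrix} a^2l^m & -a \\ a & 0 \end{pmatrix} = al^m M_1(al^m)$.
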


\begin{proof}

Posons $A=\begin{pmatrix}
   a^{2}l^{m} & -a \\
   a     & 0
\end{pmatrix}$. On a 
\begin{eqnarray*}
M_{2l^{n-m}}(\overline{al^{m}},\ldots,\overline{al^{m}}) &=& \left(\begin{pmatrix}
   \overline{al^{m}}   & \overline{-1} \\
   \overline{1} & \overline{0}
\end{pmatrix}\begin{pmatrix}
   \overline{al^{m}}   & \overline{-1} \\
   \overline{1} & \overline{0}
\end{pmatrix}\right)^{l^{n-m}} \\
                                         &=& \begin{pmatrix}
   \overline{a^{2}l^{2m}-1}   & \overline{-al^{m}} \\
   \overline{al^{m}} & \overline{-1}
\end{pmatrix}^{l^{n-m}} \\
                                         &=& \overline{\left(-Id+l^{m}\begin{pmatrix}
   a^{2}l^{m} & -a \\
   a     & 0
\end{pmatrix}\right)^{l^{n-m}}} \\
																				 &=& \overline{ \sum_{j=0}^{l^{n-m}} \binom{l^{n-m}}{j} l^{jm}(-1)^{-j+l^{n-m}}A^{j}}~({\rm bin\hat{o}me~de~Newton}) \\
																				 &=& \overline{ (-1)^{l^{n-m}}\binom{l^{n-m}}{0}Id + \sum_{j=1}^{n-m} \binom{l^{(n-m+1)-1}}{j} l^{jm}(-1)^{-j+l^{n-m}} A^{j}} \\
																				 &+& \overline{\sum_{j=n-m+1}^{l^{n-m}} \binom{l^{n-m}}{j} l^{jm}(-1)^{-j+l^{n-m}} A^{j}}~(l^{n-m} >n-m, {\rm lemme}~\ref{37}~i)).  \\																				
\end{eqnarray*}

\noindent Par le lemme \ref{36} i), $l^{n-m-j+1}$ divise $\binom{l^{(n-m+1)-1}}{j}$ pour $1 \leq j \leq n-m$. Donc, $l^{n-m-j+jm+1}$ divise $\binom{l^{(n-m+1)-1}}{j} l^{jm}$. Or, par le lemme \ref{38}, $jm-j-m+1 \geq 0$ et donc $l^{n-m-j+jm+1}=l^{n}l^{jm-j-m+1}$. Ainsi, \[\overline{\sum_{j=1}^{n-m} \binom{l^{(n-m+1)-1}}{j} l^{jm}(-1)^{-j+l^{n-m}} A^{j}}=\overline{0}.\]

\noindent Si $j \geq n-m+1$ alors on a \[jm \geq nm-m^{2}+m=n+n(m-1)+m(1-m)=n+(m-1)(n-m) \geq n.\] On en déduit \[\overline{\sum_{j=n-m+1}^{l^{n-m}} \binom{l^{n-m}}{j} l^{jm}(-1)^{-j+l^{n-m}} A^{j}}=\overline{0}.\]
\\
\\Donc, $M_{2l^{n-m}}(\overline{al^{m}},\ldots,\overline{al^{m}})=\overline{ (-1)^{l^{n-m}}}Id$.

\end{proof}

\noindent On s'intéresse maintenant à l'irréductibilité de ces solutions.

\begin{lemma}
\label{310}

Soient $N=l^{n}$ avec $l>2$, $n \geq 3$, $1 \leq m \leq n-2$ et $a$ un entier. 
\\$(\overline{2al^{n-1}},\overline{al^{m}},\ldots,\overline{al^{m}},\overline{2al^{n-1}}) \in (\mathbb{Z}/N\mathbb{Z})^{2l^{n-m}-4l^{n-m-1}+2}$ est une solution de \eqref{p}.

\end{lemma}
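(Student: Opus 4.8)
The plan is to evaluate the matrix attached to the tuple and to check that it equals $\pm Id$ modulo $N=l^{n}$. Set
\[B=\begin{pmatrix} al^{m} & -1\\ 1 & 0\end{pmatrix},\quad C=\begin{pmatrix} 2al^{n-1} & -1\\ 1 & 0\end{pmatrix},\quad S=\begin{pmatrix} 0 & -1\\ 1 & 0\end{pmatrix},\]
all read modulo $l^{n}$. The tuple has the two boundary entries $\overline{2al^{n-1}}$ and, between them, $2l^{n-m}-4l^{n-m-1}$ entries equal to $\overline{al^{m}}$, so the matrix to compute is $M=C\,B^{\,2l^{n-m}-4l^{n-m-1}}\,C$; it suffices to show $M\equiv\pm Id \pmod{l^{n}}$.

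The main step is to determine $B^{\,2l^{n-m}-4l^{n-m-1}}$ modulo $l^{n}$. I would factor the exponent and write $B^{\,2l^{n-m}-4l^{n-m-1}}=B^{\,2l^{n-m}}\big(B^{\,4l^{n-m-1}}\big)^{-1}$. By Lemma \ref{39}, $B^{\,2l^{n-m}}=M_{2l^{n-m}}(\overline{al^{m}},\ldots,\overline{al^{m}})\equiv(-1)^{l^{n-m}}Id$. For the second factor I would reuse the identity $B^{2}=-Id+l^{m}A$ with $A=\begin{pmatrix} a^{2}l^{m} & -a\\ a & 0\end{pmatrix}$ (as in the proof of Lemma \ref{39}) and expand $B^{\,4l^{n-m-1}}=(B^{2})^{2l^{n-m-1}}=(Id-l^{m}A)^{2l^{n-m-1}}$ by the binomial theorem. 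The crucial point is that the exponent has the shape $2l^{n-m-1}$, so Lemma \ref{36} ii) applies to the coefficients $\binom{2l^{n-m-1}}{j}$: combined with Lemma \ref{38}, for $2\leq j\leq n-m$ it makes $\binom{2l^{n-m-1}}{j}l^{jm}$ divisible by $l^{n-m+1-j+jm}=l^{n+(j-1)(m-1)}$, hence by $l^{n}$, while the terms with $j\geq n-m+1$ vanish already because $jm\geq(m-1)(n-m)+n\geq n$. Only the terms $j=0$ and $j=1$ survive, and since $A\equiv aS\pmod{l}$ one gets $B^{\,4l^{n-m-1}}\equiv Id-2al^{n-1}S\pmod{l^{n}}$. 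As $(2al^{n-1}S)^{2}$ is divisible by $l^{2n-2}$, hence by $l^{n}$, this matrix is invertible with $\big(B^{\,4l^{n-m-1}}\big)^{-1}\equiv Id+2al^{n-1}S$, whence
\[B^{\,2l^{n-m}-4l^{n-m-1}}\equiv(-1)^{l^{n-m}}\big(Id+2al^{n-1}S\big)\pmod{l^{n}}.\]

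It then remains to conjugate by $C$ and to observe a cancellation. A direct computation gives $C^{2}\equiv -Id+2al^{n-1}S\pmod{l^{n}}$ (the corner term $(2al^{n-1})^{2}$ being divisible by $l^{n}$), while modulo $l$ one has $C\equiv S$, so $CSC\equiv S^{3}=-S$ and therefore $2al^{n-1}\,CSC\equiv -2al^{n-1}S$. Combining,
\[M=(-1)^{l^{n-m}}\big(C^{2}+2al^{n-1}CSC\big)\equiv(-1)^{l^{n-m}}\big(-Id+2al^{n-1}S-2al^{n-1}S\big)=(-1)^{l^{n-m}+1}Id\pmod{l^{n}},\]
which is $\pm Id$, as required. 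The heart of the argument, and the only delicate point, is the valuation bookkeeping in the binomial expansion; everything is arranged so that the single surviving order-$l^{n-1}$ contribution $2al^{n-1}S$ coming from the interior block is exactly killed by the boundary factors, the term $C^{2}$ supplying the $-Id$. The hypotheses $n\geq 3$ and $m\leq n-2$ are precisely what place $2l^{n-m-1}$ in the range where Lemma \ref{36} ii) is available, which is why this choice of exponent is preferable to expanding $B^{\,2l^{n-m}-4l^{n-m-1}}$ directly.
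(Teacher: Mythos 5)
Your proof is correct and follows essentially the same route as the paper's: the same splitting $B^{2l^{n-m}-4l^{n-m-1}}=B^{2l^{n-m}}\bigl(B^{4l^{n-m-1}}\bigr)^{-1}$, the same appeal to Lemma \ref{39} for the first factor, and the same valuation bookkeeping via Lemmas \ref{36}~ii) and \ref{38} to reduce the binomial expansion to the $j=0,1$ terms, ending with the identical cancellation of the order-$l^{n-1}$ contribution against the boundary entries. The only cosmetic differences are that you keep the boundary entries as a conjugation $C\,B^{2l^{n-m}-4l^{n-m-1}}\,C$ instead of cyclically shifting them into a product with $M_{2}(\overline{2al^{n-1}},\overline{2al^{n-1}})$ as the paper does, and that you expand the forward power $(B^{2})^{2l^{n-m-1}}$ and then invert $Id-2al^{n-1}S$ using $(2al^{n-1}S)^{2}\equiv 0$, where the paper applies the binomial theorem to the inverse matrix directly.
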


\begin{proof}

\noindent $(\overline{2al^{n-1}},\overline{al^{m}},\ldots,\overline{al^{m}},\overline{2al^{n-1}}) \sim (\overline{2al^{n-1}},\overline{2al^{n-1}},\overline{al^{m}},\ldots,\overline{al^{m}})$. 
\\
\\Donc, $(\overline{2al^{n-1}},\overline{al^{m}},\ldots,\overline{al^{m}},\overline{2al^{n-1}})$ est solution de \eqref{p} si et seulement si $(\overline{2al^{n-1}},\overline{2al^{n-1}},\overline{al^{m}},\ldots,\overline{al^{m}})$ l'est aussi. Notons que \[2l^{n-m}-4l^{n-m-1}+2=2l^{n-m-1}(l-2)+2 \geq 2l^{n-m-1}+2 \geq 2l+2 \geq 8.\] On a 

\begin{eqnarray*}
M &=& M_{2l^{n-m}-4l^{n-m-1}+2}(\overline{2al^{n-1}},\overline{2al^{n-1}},\overline{al^{m}},\ldots,\overline{al^{m}}) \\
  &=& M_{2l^{n-m}-4l^{n-m-1}}(\overline{al^{m}},\ldots,\overline{al^{m}})M_{2}(\overline{2al^{n-1}},\overline{2al^{n-1}}).
\end{eqnarray*}

\noindent On pose $B=\begin{pmatrix}
   0   & a \\
   -a & a^{2}l^{m}
\end{pmatrix}$. On a aussi

\begin{eqnarray*}
M_{2l^{n-m}-4l^{n-m-1}}(\overline{al^{m}},\ldots,\overline{al^{m}}) &=& M_{2l^{n-m}}(\overline{al^{m}},\ldots,\overline{al^{m}})M_{4l^{n-m-1}}(\overline{al^{m}},\ldots,\overline{al^{m}})^{-1} \\ 
                                                        &=& \overline{(-1)^{l^{n-m}}} M_{4l^{n-m-1}}(\overline{al^{m}},\ldots,\overline{al^{m}})^{-1}~\\
                                                        &=& \overline{(-1)^{l^{n-m}}}\begin{pmatrix}
   \overline{0}   & \overline{1} \\
   \overline{-1} & \overline{al^{m}}
\end{pmatrix}^{4l^{n-m-1}}\\
                                                        &=& \overline{(-1)^{l^{n-m}}}(\begin{pmatrix}
   \overline{0}   & \overline{1} \\
   \overline{-1} & \overline{al^{m}}
\end{pmatrix}^{2})^{2l^{n-m-1}} \\
                                                        &=& \overline{(-1)^{l^{n-m}}}\begin{pmatrix}
   \overline{-1}   & \overline{al^{m}} \\
   \overline{-al^{m}} & \overline{-1+a^{2}l^{2m}}
\end{pmatrix}^{2l^{n-m-1}} \\
                                                        &=& \overline{(-1)^{l^{n-m}}(-Id+l^{m}\begin{pmatrix}
   0   & a \\
   -a & a^{2}l^{m}
\end{pmatrix})^{2l^{n-m-1}}} \\
																				                &=& \overline{ \sum_{j=0}^{2l^{n-m-1}} \binom{2l^{n-m-1}}{j} (-1)^{-j+l^{n-m-1}(l+2)}l^{jm}B^{j}}~{\rm (bin\hat{o}me~de~Newton)} \\
																				                &=& \overline{(-1)^{l^{n-m-1}(l+2)}Id+(-1)^{-1+l^{n-m-1}(l+2)}(2l^{n-m-1})l^{m}B} \\
                                                        &+& \overline{\sum_{j=2}^{n-m} \binom{2l^{n-m-1}}{j} (-1)^{-j+l^{n-m-1}(l+2)}l^{jm}B^{j}} \\
                                                        &+& \overline{ \sum_{j=n-m+1}^{2l^{n-m-1}} \binom{2l^{n-m-1}}{j} (-1)^{-j+l^{n-m-1}(l+2)}l^{jm}B^{j}}\\
																												& & (2l^{n-m-1}>n-m,~{\rm lemme}~\ref{37}~ii)). \\
\end{eqnarray*}

\noindent Par le lemme \ref{36} ii), $l^{n-m+1-j}$ divise $\binom{2l^{(n-m+1)-2}}{j}$ pour $2 \leq j \leq n-m$. Donc, $l^{n-m-j+jm+1}$ divise $\binom{2l^{n-m-1}}{j} l^{jm}$. Or, par le lemme \ref{38}, $jm-j-m+1 \geq 0$ et donc $l^{n-m-j+jm+1}=l^{n}l^{jm-j-m+1}$. Ainsi, \[\overline{ \sum_{j=2}^{n-m} \binom{2l^{n-m-1}}{j} (-1)^{-j+l^{n-m-1}(l+2)}l^{jm}B^{j}}=\overline{0}.\] 
\\
\\Si $j \geq n-m+1$ alors on a \[jm \geq nm-m^{2}+m=n+n(m-1)+m(1-m)=n+(m-1)(n-m) \geq n.\] On en déduit \[\overline{ \sum_{j=n-m+1}^{2l^{n-m-1}} \binom{2l^{n-m-1}}{j} (-1)^{l^{n-m-1}(l+2)-j}l^{jm}B^{j}}=\overline{0}.\]
\\
\\Ainsi, $M_{2l^{n-m}-4l^{n-m-1}}(\overline{al^{m}},\ldots,\overline{al^{m}})=\overline{(-1)^{l^{n-m-1}(l+2)}\left(Id-2l^{n-1}\begin{pmatrix}
   0   & a \\
   -a & a^{2}l^{m}
\end{pmatrix}\right)}$.
\\
\\ \noindent De plus, $M_{2}(\overline{2al^{n-1}},\overline{2al^{n-1}})=\begin{pmatrix}
   \overline{-1}   & \overline{-2al^{n-1}} \\
   \overline{2al^{n-1}} & \overline{-1}
\end{pmatrix}.$
\\
\\Donc, on a
\begin{eqnarray*}
M &=& M_{2l^{n-m}-4l^{n-m-1}}(\overline{al^{m}},\ldots,\overline{al^{m}})M_{2}(\overline{2al^{n-1}},\overline{2al^{n-1}})\\
  &=& \overline{(-1)^{l^{n-m-1}(l+2)}\left(Id-2l^{n-1}\begin{pmatrix}
   0   & a \\
   -a & a^{2}l^{m}
\end{pmatrix}\right)}\begin{pmatrix}
   \overline{-1}   & \overline{-2al^{n-1}} \\
   \overline{2al^{n-1}} & \overline{-1}
\end{pmatrix} \\
 &=& \overline{(-1)^{l^{n-m-1}(l+2)}}\left(\begin{pmatrix}
   \overline{-1}   & \overline{-2al^{n-1}} \\
   \overline{2al^{n-1}} & \overline{-1}
\end{pmatrix}-\overline{2l^{n-1}}\begin{pmatrix}
   \overline{2a^{2}l^{n-1}}   & \overline{-a} \\
   \overline{a+2a^{3}l^{n+m-1}} & \overline{2a^{2}l^{n-1}-a^{2}l^{m}}
\end{pmatrix}\right)\\
 &=& \overline{(-1)^{l^{n-m-1}(l+2)}}\left(\begin{pmatrix}
   \overline{-1}   & \overline{-2al^{n-1}} \\
   \overline{2al^{n-1}} & \overline{-1}
\end{pmatrix}-\begin{pmatrix}
   \overline{0}   & \overline{-2al^{n-1}} \\
   \overline{2al^{n-1}} & \overline{0}
\end{pmatrix}\right) \\
 &=& \overline{(-1)^{1+l^{n-m-1}(l+2)}Id}.
\end{eqnarray*}

\end{proof}

\noindent On peut maintenant démontrer le résultat principal de réductibilité.

\begin{proposition}
\label{311}

Soient $p \in \mathbb{P}$ impair, $n \geq 2$, $N=p^{n}$, $1 \leq m \leq n$ et $a$ un entier premier avec $p$. La solution $\overline{ap^{m}}$-monomiale minimale de \eqref{p} est réductible de taille $2p^{n-m}$.

\end{proposition}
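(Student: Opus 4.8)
The plan is to prove the two assertions in turn: that the $\overline{ap^m}$-monomial minimal solution has size exactly $2p^{n-m}$, and that it is reducible. Both hinge on the remark that a $\overline{k}$-monomial tuple of length $n'$ solves \eqref{p} exactly when $C^{n'}=\pm Id$, where $C=M_1(\overline{k})=\left(\begin{smallmatrix}\overline{k}&\overline{-1}\\\overline{1}&\overline{0}\end{smallmatrix}\right)$; thus the minimal size is the least $n'\geq 1$ with $C^{n'}=\pm Id$, i.e.\ the order of $C$ in $SL_2(\mathbb{Z}/p^n\mathbb{Z})$ modulo $\{\pm Id\}$.

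For the size, Lemma~\ref{39} with $l=p$ gives $C^{2p^{n-m}}=-Id$ (here $p^{n-m}$ is odd since $p$ is odd and $m\leq n-1$), so the size is at most $2p^{n-m}$; the work is the matching lower bound. Since $m\geq 1$, reducing modulo $p$ gives $C\equiv S\pmod p$ with $S=\left(\begin{smallmatrix}\overline{0}&\overline{-1}\\\overline{1}&\overline{0}\end{smallmatrix}\right)$ of order $4$, so $4\mid\mathrm{ord}(C)$. From $C^2=\overline{ap^m}\,C-Id$ one computes $C^4=Id+p^mF$ with $F\equiv-2a\,S\not\equiv 0\pmod p$. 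Using that, for $p$ odd, the $p$-th power of a matrix $\equiv Id\pmod{p^j}$ but $\not\equiv Id\pmod{p^{j+1}}$ (with $j\geq 1$) is $\equiv Id\pmod{p^{j+1}}$ but $\not\equiv Id\pmod{p^{j+2}}$, iterating from $C^4$ shows $C^4$ has order exactly $p^{n-m}$; hence $\mathrm{ord}(C)=4p^{n-m}$, and as $C^{2p^{n-m}}=-Id$ the least $n'$ with $C^{n'}=\pm Id$ is $2p^{n-m}$.

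For reducibility I would display, for each $m$, a decomposition $(\overline{k},\ldots,\overline{k})\sim A\oplus B$ with $B$ a solution and both factors of length $\geq 3$. If $1\leq m\leq n-2$ (so $n\geq 3$), let $B$ be the solution of Lemma~\ref{310} of length $l=2p^{n-m}-4p^{n-m-1}+2$, whose interior entries equal $\overline{ap^m}=\overline{k}$ and whose ends are $\overline{2ap^{n-1}}$; set $A=(\overline{a(p^m-2p^{n-1})},\overline{k},\ldots,\overline{k},\overline{a(p^m-2p^{n-1})})$ of length $4p^{n-m-1}\geq 3$. By the definition of $\oplus$ the two junction entries are $\overline{a(p^m-2p^{n-1})+2ap^{n-1}}=\overline{k}$, so $A\oplus B=(\overline{k},\ldots,\overline{k})$ has length $2p^{n-m}$. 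If $m=n-1$, then Lemma~\ref{310} no longer applies, but now $\overline{k}^2=\overline{0}$, so $B=(\overline{-k},\overline{k},\overline{k},\overline{-k})$ is a length-$4$ solution by Proposition~\ref{spe} iv) (form $(\overline{-x},\overline{y},\overline{x},\overline{-y})$ with $x=y=\overline{k}$ and $\overline{xy}=\overline{k}^2=\overline{0}$); pairing it with $A=(\overline{2k},\overline{k},\ldots,\overline{k},\overline{2k})$ of length $2p-2\geq 4$ gives $A\oplus B=(\overline{k},\ldots,\overline{k})$ of length $2p$. The case $m=n$ is degenerate: $\overline{k}=\overline{0}$, the minimal monomial solution is $(\overline{0},\overline{0})$ of size $2=2p^{0}$, which is not irreducible by convention.

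I expect the minimality half of the size statement to be the main obstacle: the upper bound is immediate from Lemma~\ref{39}, but excluding every shorter monomial solution requires pinning down $\mathrm{ord}(C)$, and the cleanest route is the $p$-adic filtration argument above. A more hands-on approach through the $(2,1)$-entry, which equals a Chebyshev-type term $U_{n'-1}$ with $U_r=\overline{ap^m}\,U_{r-1}-U_{r-2}$, runs into the fact that $v_p(U_{2i+1})=m+v_p(i+1)$ holds only while $v_p(i+1)<2m$ and otherwise needs an awkward case split, which the filtration argument sidesteps. The remaining ingredients are routine once the size is known; the points to keep in view are that $p$ being odd is used both to force $C^{2p^{n-m}}=-Id$ (not $+Id$) and to keep $l=2p^{n-m-1}(p-2)+2\geq 3$ in Lemma~\ref{310}, and that only $B$ (not $A$) needs to be a solution in Definition~\ref{23}.
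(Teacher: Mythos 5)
Your proposal is correct, and it splits into the same two halves as the paper's proof but diverges on one of them. For reducibility you use exactly the paper's decompositions: for $1\le m\le n-2$ the solution of Lemma~\ref{310} paired with the $4p^{n-m-1}$-tuple whose ends are $\overline{ap^{m}-2ap^{n-1}}$, and for $m=n-1$ the size-4 solution $(\overline{-ap^{m}},\overline{ap^{m}},\overline{ap^{m}},\overline{-ap^{m}})$ of Proposition~\ref{spe}~iv) paired with $(\overline{2ap^{m}},\overline{ap^{m}},\ldots,\overline{ap^{m}},\overline{2ap^{m}})$, with the same degenerate treatment of $m=n$; this matches the paper's construction essentially word for word. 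Where you genuinely differ is the proof that the minimal size is $2p^{n-m}$. The paper argues combinatorially: $h\mid 2p^{n-m}$; $h$ is even because reducing modulo $p^{m}$ turns the $h$-tuple into an all-zero solution, which must have even size; for $m=n-1$ this leaves $h\in\{2,2p\}$ and $h\neq 2$ is excluded by Proposition~\ref{spe}; and for $m\le n-2$ the companion factor $A$ (which is itself a solution, by the $\oplus$-property, since both the sum and the Lemma~\ref{310} factor are solutions) has ends $\neq\overline{ap^{m}}$, so Lemma~\ref{33}~i) forbids $h\mid 4p^{n-m-1}$, forcing $h=2p^{n-m}$. You instead compute the order of $C=M_{1}(\overline{ap^{m}})$ in $SL_{2}(\mathbb{Z}/p^{n}\mathbb{Z})$: $4\mid\mathrm{ord}(C)$ by reduction modulo $p$, and $\mathrm{ord}(C^{4})=p^{n-m}$ by the standard $p$-adic lifting lemma applied to $C^{4}=Id+p^{m}F$ with $F\equiv -2aS\pmod{p}$, whence $\mathrm{ord}(C)=4p^{n-m}$ and the least $n'$ with $C^{n'}=\pm Id$ is $2p^{n-m}$. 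Your route is uniform in $m$ (no case split for the size), yields the stronger statement $\mathrm{ord}(C)=4p^{n-m}$, and makes explicit where oddness of $p$ enters (the lifting step fails at $j=1$ when $p=2$, consistent with the paper's separate section for $N=2^{n}$); the paper's route buys economy, reusing the reduction factors and the already-established Lemma~\ref{33} instead of introducing a new matrix computation, and it is the pattern that recurs in the $p=2$ analysis (Propositions~\ref{314}, \ref{318}). Your closing observation that Definition~\ref{23} requires only the second factor to be a solution is also correct — though note that the paper does need the first factor to be a solution, precisely because its minimality argument (unlike yours) runs through Lemma~\ref{33}.
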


\begin{proof}

Si $m=n$ alors $\overline{ap^{m}}=\overline{0}$ et la	solution est réductible de taille $2=2p^{n-n}$. On suppose donc maintenant $m \leq n-1$.
\\
\\Par le lemme \ref{39}, $(\overline{ap^{m}},\ldots,\overline{ap^{m}}) \in (\mathbb{Z}/N\mathbb{Z})^{2p^{n-m}}$ est solution de \eqref{p}. Notons $h$ la taille de la solution $\overline{ap^{m}}$-monomiale minimale de \eqref{p}.
\\
\\Comme $p^{m}$ divise $p^{n}$, le $h$-uplet ne contenant que $ap^{m}$ est une solution modulo $p^{m}$, c'est-à-dire que l'on a une solution de $(E_{p^{m}})$ ne contenant que des zéros de taille $h$. Une solution ne contenant que des zéros est toujours de taille paire. Donc, $h$ est pair. 
\\
\\Supposons, pour commencer, $n=2$ ou $m=n-1$. Par le lemme \ref{39}, $h$ divise $2p$, c'est-à-dire que l'on a $h \in \{1, 2, p, 2p\}$. Puisque $h$ est pair et $p$ impair, on a $h=2$ ou $h=2p$.  Comme $\overline{ap^{m}} \neq \overline{0}$ (car $a$ et $p$ premiers entre eux), on a, par la proposition \ref{spe}, $h=2p$. De plus, $\overline{ap^{m}}^{2}=\overline{0}$ (car $2m \geq n$). Donc, $(\overline{-ap^{m}},\overline{ap^{m}},\overline{ap^{m}},\overline{-ap^{m}})$ est solution de \eqref{p} (voir proposition \ref{spe}). Ainsi, 
\[(\overline{ap^{m}},\ldots,\overline{ap^{m}})= (\overline{2ap^{m}},\overline{ap^{m}},\ldots,\overline{ap^{m}},\overline{2ap^{m}}) \oplus (\overline{-ap^{m}},\overline{ap^{m}},\overline{ap^{m}},\overline{-ap^{m}}).\]
\noindent Comme $h \geq 5$, la solution $(\overline{2ap^{m}},\overline{ap^{m}},\ldots,\overline{ap^{m}},\overline{2ap^{m}})$ est de taille supérieure à 3. Ainsi, la solution $\overline{ap^{m}}$-monomiale minimale de \eqref{p} est réductible de taille $2p$.
\\
\\On suppose maintenant $n \geq 3$ et $m \leq n-2$. On a \[(\overline{ap^{m}},\ldots,\overline{ap^{m}})=(\overline{ap^{m}-2ap^{n-1}},\overline{ap^{m}},\ldots,\overline{ap^{m}},\overline{ap^{m}-2ap^{n-1}}) \oplus (\overline{2ap^{n-1}},\overline{ap^{m}},\ldots,\overline{ap^{m}},\overline{2ap^{n-1}}),\] avec \[(\overline{2ap^{n-1}},\overline{ap^{m}},\ldots,\overline{ap^{m}},\overline{2ap^{n-1}}) \in (\mathbb{Z}/N\mathbb{Z})^{2p^{n-m}-4p^{n-m-1}+2}\] et \[(\overline{ap^{m}-2ap^{n-1}},\overline{ap^{m}},\ldots,\overline{ap^{m}},\overline{ap^{m}-2ap^{n-1}}) \in (\mathbb{Z}/N\mathbb{Z})^{4p^{n-m-1}}.\] 
\\
\\Or, par le lemme précédent, $(\overline{2ap^{n-1}},\overline{ap^{m}},\ldots,\overline{ap^{m}},\overline{2ap^{n-1}}) \in (\mathbb{Z}/N\mathbb{Z})^{2p^{n-m}-4p^{n-m-1}+2}$ est une solution de \eqref{p}. De plus celle-ci est de taille supérieure à 3.
\\
\\$(\overline{ap^{m}-2ap^{n-1}},\overline{ap^{m}},\ldots,\overline{ap^{m}},\overline{ap^{m}-2ap^{n-1}}) \in (\mathbb{Z}/N\mathbb{Z})^{4p^{n-m-1}}$ est de taille supérieure à 4. 
\\
\\Donc, $(\overline{ap^{m}},\ldots,\overline{ap^{m}}) \in (\mathbb{Z}/N\mathbb{Z})^{2p^{n-m}}$ est une solution réductible de \eqref{p}.
\\
\\De plus, $h$ divise $2p^{n-m}$, c'est-à-dire $h$ est de la forme $h=2^{u}p^{v}$ (puisque $p$ premier). On sait également que $(\overline{ap^{m}-2ap^{n-1}},\overline{ap^{m}},\ldots,\overline{ap^{m}},\overline{ap^{m}-2ap^{n-1}}) \in (\mathbb{Z}/N\mathbb{Z})^{4p^{n-m-1}}$ est une solution de \eqref{p}. Comme $p \neq 2$ et $p \nmid a$, $\overline{ap^{m}-2ap^{n-1}} \neq \overline{ap^{m}}$. Ainsi, par le lemme \ref{33} i), $h$ ne divise pas $4p^{n-m-1}$.
\\
\\Comme $h$ est pair, il existe $i \in \mathbb{N}$, $i \leq n-m$ tel que $h=2p^{i}$. Si $i \leq n-m-1$ alors $h$ divise $4p^{n-m-1}$, ce qui n'est pas le cas. Donc, $i=n-m$ et $h=2p^{n-m}$.
\\
\\Ainsi, la solution $\overline{ap^{m}}$-monomiale minimale de \eqref{p} est réductible de taille $2p^{n-m}$.

\end{proof}

\noindent En combinant les propositions \ref{35} et \ref{311}, on obtient le premier point du théorème \ref{26}.
 
\subsection{Cas où $N$ est une puissance de deux}
\label{pair}

On va maintenant finir la preuve du théorème \ref{26} en s'intéressant au cas où $N$ est une puissance de 2. 
\\
\\ \indent Pour cela on va commencer par étudier les situations où $k=a2^{m}$ avec $n \geq 4$, $2 \leq m \leq n-2$ et $a$ impair. Le résultat du lemme \ref{39} est valide pour $l=2$ mais on ne peut malheureusement pas prolonger tel quel le lemme \ref{310}. En effet, pour $p=2$ on a $2p^{n-m}-4p^{n-m-1}+2=2$. On va donc essayer de trouver une autre solution nous permettant d'effectuer une réduction.
\\
\\Comme dans le cas précédent, on commence par donner quelques résultats préliminaires :

\begin{lemma}
\label{312}

i) $\forall (j,m) \in \mathbb{N}^{2}$, $j,m \geq 2$, $jm-j-m \geq 0$.
\\ii) Soient $n \geq 4$, $2 \leq m \leq n-2$ et $j \geq n-m$. On a $jm \geq n$.
\\iii) Si $n \geq 3$, $2^{n-1} \geq n+1$.
\\iv) Soit $n \in \mathbb{N}^{*}$. On a $2^{n-1} \geq n$.

\end{lemma}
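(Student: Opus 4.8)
The plan is to treat the four inequalities in turn; each is elementary, and I would prove i) by a factorisation, deduce ii) from i), establish iii) by induction, and deduce iv) from iii). For i), I would simply rewrite the left-hand side as $jm-j-m=(j-1)(m-1)-1$. Since $j\geq 2$ and $m\geq 2$ give $j-1\geq 1$ and $m-1\geq 1$, the product satisfies $(j-1)(m-1)\geq 1$, whence $jm-j-m\geq 0$. (An induction on $j$, as in Lemme \ref{38}, would work equally well, but the factorisation is cleaner.)

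For ii), the key observation is that the hypothesis $m\leq n-2$ forces $n-m\geq 2$, while $m\geq 2$ by assumption. Because $m>0$ and $j\geq n-m$, I would first bound $jm\geq (n-m)m$. Then, applying part i) to the pair $(n-m,m)$ — legitimate precisely because both entries are $\geq 2$ — I obtain $(n-m)m\geq (n-m)+m=n$. Chaining the two inequalities yields $jm\geq n$.

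For iii), I would argue by induction on $n$: the base case $n=1$ is $2^{0}=1\geq 1$, and assuming $2^{n-1}\geq n$ one gets $2^{n}=2\cdot 2^{n-1}\geq 2n\geq n+1$. (Alternatively this is immediate from Lemme \ref{37} i) with $x=2$ applied to the exponent $n-1$, since both sides are integers.) Finally, for iv) I would deduce the statement from iii): for $n\geq 3$ one has $2^{n-1}=2\cdot 2^{n-2}\geq 2(n-1)$ by iii) applied to $n-1$, and $2(n-1)=2n-2\geq n+1$ holds exactly when $n\geq 3$.

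I do not expect any genuine obstacle here, as all four assertions are routine. The only point demanding a little care is ii), where one must explicitly check that $n-m\geq 2$ before invoking part i); omitting this would break the application, since i) requires both of its arguments to be at least $2$.
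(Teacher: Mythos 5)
Your proof is correct, but it is organised differently from the paper's. For i) the paper argues by induction on $j$ (in the same style as Lemme \ref{38}), while you use the factorisation $jm-j-m=(j-1)(m-1)-1$, which is more direct. For ii) the paper does not invoke part i) at all: it expands directly, $jm \geq nm-m^{2} = n+(m-1)(n-m-1)-1 \geq n$, using $m-1\geq 1$ and $n-m-1\geq 1$; you instead bound $jm\geq (n-m)m$ and apply part i) to the pair $(n-m,m)$ after checking $n-m\geq 2$. These are arithmetically the same inequality — the paper's term $(m-1)(n-m-1)-1$ is exactly your $(j-1)(m-1)-1$ specialised to $j=n-m$ — but your version is more modular, reusing i) rather than re-deriving it inline, and your explicit check that $n-m\geq 2$ is precisely the hypothesis that makes this reuse legitimate. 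Part iii) is identical in both (induction on $n$). For iv) the paper runs a second, separate induction with base case $n=3$, whereas you deduce it from iii) applied to $n-1$ via $2^{n-1}\geq 2(n-1)\geq n+1$ for $n\geq 3$, avoiding the extra induction. In short: the paper's proofs are self-contained case-by-case inductions; yours trades those for a single factorisation identity and cross-references between the parts, which is shorter, at no cost in rigour.
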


\begin{proof}

i) On procède par récurrence sur $j$. 
\\
\\Si $j=2$, alors, pour tout $m \geq 2$, on a $jm-j-m=2m-2-m=m-2 \geq 0$. Supposons qu'il existe un $j \in \mathbb{N}$, $j \geq 2$, tel que, pour tout $m \geq 2$, $jm-j-m \geq 0$. On a, pour tout $m \geq 2$, 
\[(j+1)m-(j+1)-m=jm+m-j-1-m=(jm-j-m)+m-1 \geq m-1 \geq 0.\]
Le résultat est démontré par récurrence.
\\
\\ii) Si $j \geq n-m$ alors on a 
\begin{eqnarray*}
jm & \geq & nm-m^{2} \\
   &=& n+n(m-1)-m^{2}+1-1 \\
	 &=& n+n(m-1)-(m-1)(m+1)-1 \\
	 &=& n+(m-1)(n-m-1)-1 \\
	 & \geq & n~({\rm car}~(m-1)\geq 1~{\rm et}~(n-m-1)\geq 1).\\
\end{eqnarray*}

\noindent iii) Par récurrence sur $n$. $2^{3-1}=2^{2}=4$. Supposons qu'il existe un $n \geq 3$ tel que $2^{n-1} \geq n+1$. $2^{(n+1)-1}=2^{n}=2 \times 2^{n-1} \geq 2n+2 \geq n+2$. Le résultat est démontré par récurrence.
\\
\\ \noindent iv) Si $n \geq 3$ alors le résultat est vrai par iii). Si $n=1$ on a $2^{1-1}=2^{0}=1$. Si $n=2$ on a $2^{2-1}=2^{1}=2$. Ainsi, le résultat est démontré.

\end{proof} 

\begin{lemma}
\label{313}

Soient $N=2^{n}$ avec $n \geq 4$, $2 \leq m \leq n-2$ et $a$ un entier. $(\overline{a2^{n-1}},\overline{a2^{m}},\ldots,\overline{a2^{m}},\overline{a2^{n-1}}) \in (\mathbb{Z}/N\mathbb{Z})^{2^{n-m}+2}$ est une solution de \eqref{p}.

\end{lemma}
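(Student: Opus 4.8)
The plan is to mirror the strategy of Lemma~\ref{310}, adapting it to the prime $2$. First I would use a cyclic permutation to bring the two special entries together: since $(\overline{a2^{n-1}},\overline{a2^{m}},\ldots,\overline{a2^{m}},\overline{a2^{n-1}}) \sim (\overline{a2^{n-1}},\overline{a2^{n-1}},\overline{a2^{m}},\ldots,\overline{a2^{m}})$ (Définition~\ref{22}), the first tuple is a solution of \eqref{p} if and only if the permuted one is. This lets me factor the associated matrix as
\[ M = M_{2^{n-m}}(\overline{a2^{m}},\ldots,\overline{a2^{m}})\, M_{2}(\overline{a2^{n-1}},\overline{a2^{n-1}}), \]
so the problem reduces to computing the middle block $M_{2^{n-m}}(\overline{a2^{m}},\ldots,\overline{a2^{m}})$ and then the correction coming from the $M_{2}$ factor.

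For the middle block I would square the elementary matrix to get $\left(\begin{smallmatrix} \overline{a2^{m}} & \overline{-1} \\ \overline{1} & \overline{0}\end{smallmatrix}\right)^{2} = \overline{-Id + 2^{m}A}$ with $A = \left(\begin{smallmatrix} a^{2}2^{m} & -a \\ a & 0\end{smallmatrix}\right)$, exactly the matrix from Lemma~\ref{39}, so that $M_{2^{n-m}} = \overline{(-Id+2^{m}A)^{2^{n-m-1}}}$, which I expand by the binomial theorem. The term $j=0$ gives $\overline{Id}$ (as $2^{n-m-1}$ is even, using $m \leq n-2$) and the term $j=1$ gives $\overline{-2^{n-1}A}$, which survives. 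For $2 \leq j \leq n-m-1$, Lemma~\ref{36}~i) (with $l=2$) shows $2^{n-m-j}$ divides $\binom{2^{n-m-1}}{j}$, so the coefficient is divisible by $2^{n-m-j+jm}=2^{n}\,2^{jm-j-m}$, which vanishes modulo $2^{n}$ by Lemma~\ref{312}~i); for $n-m \leq j \leq 2^{n-m-1}$ (a nonempty range by Lemma~\ref{312}~iii)) one has $jm \geq n$ by Lemma~\ref{312}~ii), so $2^{jm}$ already kills the term. Hence $M_{2^{n-m}}(\overline{a2^{m}},\ldots,\overline{a2^{m}}) = \overline{Id - 2^{n-1}A}$, and since $2^{n-1}a^{2}2^{m} \equiv 0$ the diagonal of $2^{n-1}A$ disappears, leaving $M_{2^{n-m}} = \left(\begin{smallmatrix}\overline{1} & \overline{a2^{n-1}} \\ \overline{-a2^{n-1}} & \overline{1}\end{smallmatrix}\right)$.

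Finally I would compute $M_{2}(\overline{a2^{n-1}},\overline{a2^{n-1}})$: since $a^{2}2^{2n-2}\equiv 0$ modulo $2^{n}$, it equals $\left(\begin{smallmatrix}\overline{-1} & \overline{-a2^{n-1}} \\ \overline{a2^{n-1}} & \overline{-1}\end{smallmatrix}\right)$. Multiplying the two blocks, each off-diagonal entry is a multiple of $2a2^{n-1}=a2^{n}\equiv 0$ and each diagonal entry equals $-1 + a^{2}2^{2n-2}\equiv -1$, so $M = \overline{-Id}$, which proves that the tuple is a solution of \eqref{p}. The one delicate point is the bookkeeping in the binomial expansion — splitting the index range correctly at $j=n-m$ and checking that the two $2$-adic valuation estimates (Lemmas~\ref{36}~i) and~\ref{312}~i), ii)) jointly cover every index — but this is entirely parallel to the computation already carried out in Lemma~\ref{310}, the difference being that here the two $M_{2}$ entries are taken equal to $\overline{a2^{n-1}}$ rather than doubled.
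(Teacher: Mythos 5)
Your proposal is correct and follows essentially the same route as the paper: the same cyclic permutation and factorization $M = M_{2^{n-m}}(\overline{a2^{m}},\ldots,\overline{a2^{m}})\,M_{2}(\overline{a2^{n-1}},\overline{a2^{n-1}})$, the same binomial expansion of $(-Id+2^{m}C)^{2^{n-m-1}}$ with the surviving $j=0,1$ terms and the vanishing of the ranges $2\leq j\leq n-m-1$ and $j\geq n-m$ via Lemmes~\ref{36}~i) et~\ref{312}. The only cosmetic differences are that you simplify $\overline{Id-2^{n-1}C}$ before the final product (the paper multiplies first, then reduces) and you treat the case $m=n-2$ implicitly as an empty summation range where the paper spells it out; neither affects correctness.
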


\begin{proof}

\noindent $(\overline{a2^{n-1}},\overline{a2^{m}},\ldots,\overline{a2^{m}},\overline{a2^{n-1}}) \sim (\overline{a2^{n-1}},\overline{a2^{n-1}},\overline{a2^{m}},\ldots,\overline{a2^{m}})$. 
\\
\\ \noindent $(\overline{a2^{n-1}},\overline{a2^{m}},\ldots,\overline{a2^{m}},\overline{a2^{n-1}})$ est solution de \eqref{p} si et seulement si $(\overline{a2^{n-1}},\overline{a2^{n-1}},\overline{a2^{m}},\ldots,\overline{a2^{m}})$ l'est aussi. On a 

\begin{eqnarray*}
M &=& M_{2^{n-m}+2}(\overline{a2^{n-1}},\overline{a2^{n-1}},\overline{a2^{m}},\ldots,\overline{a2^{m}}) \\
  &=& M_{2^{n-m}}(\overline{a2^{m}},\ldots,\overline{a2^{m}})M_{2}(\overline{a2^{n-1}},\overline{a2^{n-1}}).
\end{eqnarray*}

\noindent On pose $C=\begin{pmatrix}
   a^{2}2^{m} & -a \\
   a     & 0
\end{pmatrix}$. On a 

\begin{eqnarray*}
M_{2^{n-m}}(\overline{a2^{m}},\ldots,\overline{a2^{m}})  &=& \begin{pmatrix}
   \overline{a2^{m}}   & \overline{-1} \\
   \overline{1} & \overline{0}
\end{pmatrix}^{2^{n-m}}\\
                                                        &=& \left(\begin{pmatrix}
   \overline{a2^{m}}   & \overline{-1} \\
   \overline{1} & \overline{0}
\end{pmatrix}^{2}\right)^{2^{n-m-1}} \\
                                                        &=& \begin{pmatrix}
   \overline{a^{2}2^{2m}-1}   & \overline{-a2^{m}} \\
   \overline{a2^{m}} & \overline{-1}
\end{pmatrix}^{2^{n-m-1}} \\
                                                        &=& \overline{\left(-Id+2^{m}\begin{pmatrix}
   a^{2}2^{m} & -a \\
   a     & 0
\end{pmatrix}\right)^{2^{n-m-1}}} \\
\end{eqnarray*}

\begin{eqnarray*}
M_{2^{n-m}}(\overline{a2^{m}},\ldots,\overline{a2^{m}}) &=& \overline{ \sum_{j=0}^{2^{n-m-1}} \binom{2^{n-m-1}}{j} (-1)^{-j+2^{n-m-1}}2^{jm}C^{j}}~{\rm (bin\hat{o}me~de~Newton)}. \\
                                                        &=& \overline{(-1)^{2^{n-m-1}}Id+(-1)^{-1+2^{n-m-1}}(2^{n-m-1})2^{m}C} \\
                                                        &+& \overline{\sum_{j=2}^{n-m-1} \binom{2^{n-m-1}}{j} (-1)^{-j+2^{n-m-1}}2^{jm}C^{j}} \\
                                                        &+& \overline{ \sum_{j=n-m}^{2^{n-m-1}} \binom{2^{n-m-1}}{j} (-1)^{-j+2^{n-m-1}}2^{jm}C^{j}}~(2^{n-m-1}\geq n-m,~{\rm lemme}~\ref{312}~iv)).
\end{eqnarray*}

\noindent Si $m=n-2$ alors $n-m-1=1$ et alors 

\begin{eqnarray*}
M_{2^{n-m}}(\overline{a2^{m}},\ldots,\overline{a2^{m}})	  &=& \overline{(-1)^{2^{n-m-1}}Id+(-1)^{-1+2^{n-m-1}}(2^{n-m-1})2^{m}C} \\
                                                        &+& \overline{ \sum_{j=n-m}^{2^{n-m-1}} \binom{2^{n-m-1}}{j} (-1)^{-j+2^{n-m-1}}2^{jm}C^{j}}.
\end{eqnarray*}

\noindent Par le lemme \ref{36} i), $2^{n-m-j}$ divise $\binom{2^{n-m-1}}{j}$ pour $2 \leq j \leq n-m-1$. Donc, $2^{n-m-j+jm}$ divise $\binom{2^{n-m-1}}{j} 2^{jm}$. Or, par le lemme \ref{312} i), $jm-j-m \geq 0$ (puisque $j,m \geq 2$) et donc $2^{n-m-j+jm}=2^{n}2^{jm-j-m}$. Ainsi, \[\overline{ \sum_{j=2}^{n-m-1} \binom{2^{n-m-1}}{j} (-1)^{-j+2^{n-m-1}}2^{jm}C^{j}}=\overline{0}.\] 
\\
\\ Si $j \geq n-m$, on a, par le lemme \ref{312} ii), $jm \geq n$. Donc, $\overline{ \sum_{j=n-m}^{2^{n-m-1}} \binom{2^{n-m-1}}{j} (-1)^{2^{n-m-1}-j}2^{jm}C^{j}}=\overline{0}$.
\\
\\Ainsi, $M_{2^{n-m}}(\overline{a2^{m}},\ldots,\overline{a2^{m}})=\overline{(-1)^{2^{n-m-1}}Id+(-1)^{-1+2^{n-m-1}}(2^{n-1})\begin{pmatrix}
   a^{2}2^{m} & -a \\
   a     & 0
\end{pmatrix}}.$
\\
\\ \noindent De plus, $M_{2}(\overline{a2^{n-1}},\overline{a2^{n-1}})=\begin{pmatrix}
   \overline{-1}   & \overline{-a2^{n-1}} \\
   \overline{a2^{n-1}} & \overline{-1}
\end{pmatrix}.$
\\
\\Donc, on a
\begin{eqnarray*}
M &=& M_{2^{n-m}}(\overline{a2^{m}},\ldots,\overline{a2^{m}})M_{2}(\overline{a2^{n-1}},\overline{a2^{n-1}})\\
  &=& \overline{\left((-1)^{2^{n-m-1}}Id+(-1)^{-1+2^{n-m-1}}(2^{n-1})\begin{pmatrix}
   a^{2}2^{m} & -a \\
   a     & 0
\end{pmatrix}\right)\begin{pmatrix}
   -1   & -a2^{n-1} \\
   a2^{n-1} & -1
\end{pmatrix}} \\
 &=& \overline{(-1)^{2^{n-m-1}}\begin{pmatrix}
   -1   & -a2^{n-1} \\
   a2^{n-1} & -1
\end{pmatrix}+(-1)^{2^{n-m-1}-1}(2^{n-1})\begin{pmatrix}
   -a^{2}(2^{m}+2^{n-1})   & -a^{3}2^{n+m-1}+a \\
   -a & -a^{2}2^{n-1}
\end{pmatrix}}\\
 &=& \overline{(-1)^{2^{n-m-1}}}\left(\begin{pmatrix}
   \overline{-1}   & \overline{-a2^{n-1}} \\
   \overline{a2^{n-1}} & \overline{-1}
\end{pmatrix}-\begin{pmatrix}
   \overline{0}   & \overline{a2^{n-1}} \\
   \overline{-a2^{n-1}} & \overline{0}
\end{pmatrix}\right) \\
 &=& \begin{pmatrix}
   \overline{-1}   & \overline{-a2^{n}} \\
   \overline{a2^{n}} & \overline{-1}
\end{pmatrix}~(n-m-1 \geq 1) \\
 &=& -Id.
\end{eqnarray*}

\end{proof}

\noindent On peut maintenant démontrer le résultat voulu.

\begin{proposition}
\label{314}

Soient $N=2^{n}$ avec $n \geq 4$, $2 \leq m \leq n-2$ et $a$ un entier impair. La solution $\overline{a2^{m}}$-monomiale minimale de \eqref{p} est réductible de taille $2^{n-m+1}$.

\end{proposition}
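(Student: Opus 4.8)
The plan is to follow the architecture of the proof of Proposition~\ref{311}, using the solution produced in Lemma~\ref{313} as the reducing block in place of the one coming from Lemma~\ref{310}. Throughout set $A = \begin{pmatrix} \overline{a2^m} & \overline{-1} \\ \overline{1} & \overline{0} \end{pmatrix}$, so that $M_s(\overline{a2^m},\ldots,\overline{a2^m}) = A^s$, and let $h$ be the size of the $\overline{a2^m}$-monomial minimal solution of \eqref{p}. Since $\{s \in \mathbb{Z} : A^s = \pm Id\}$ is a subgroup of $\mathbb{Z}$, it equals $h\mathbb{Z}$; in particular every size of an $\overline{a2^m}$-monomial solution is a multiple of $h$. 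I will first exhibit a nontrivial decomposition of the size-$2^{n-m+1}$ monomial solution, and then prove $h = 2^{n-m+1}$, so that the decomposed solution is precisely the minimal one.

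\textbf{Reducibility.} By Lemma~\ref{39} (which holds for $l=2$), the tuple $(\overline{a2^m},\ldots,\overline{a2^m}) \in (\mathbb{Z}/N\mathbb{Z})^{2^{n-m+1}}$ is a solution. I would present it as the sum, in the sense of Definition~\ref{21},
\[(\overline{a2^m},\ldots,\overline{a2^m}) = (\overline{a2^m-a2^{n-1}},\overline{a2^m},\ldots,\overline{a2^m},\overline{a2^m-a2^{n-1}}) \oplus (\overline{a2^{n-1}},\overline{a2^m},\ldots,\overline{a2^m},\overline{a2^{n-1}}),\]
the left-hand summand having size $2^{n-m}$ and the right-hand summand size $2^{n-m}+2$. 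Checking this against Definition~\ref{21} is immediate: each junction entry equals $\overline{(a2^m-a2^{n-1})+a2^{n-1}} = \overline{a2^m}$, all remaining entries are already $\overline{a2^m}$, and the resulting size is $2^{n-m}+(2^{n-m}+2)-2 = 2^{n-m+1}$. The right summand is a solution by Lemma~\ref{313} and has size $2^{n-m}+2 \geq 6$, while the left summand has size $2^{n-m} \geq 4$ (and is itself a solution by the additivity property of $\oplus$ recalled just after Definition~\ref{21}). Both blocks thus have size at least $3$, so the size-$2^{n-m+1}$ monomial solution is reducible.

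\textbf{Size.} The same solution gives $h \mid 2^{n-m+1}$, so $h$ is a power of $2$; it remains to exclude $h \mid 2^{n-m}$. For this I would quote the intermediate computation of the proof of Lemma~\ref{313}, which, since $n-m-1 \geq 1$ forces $2^{n-m-1}$ even and $n+m-1 \geq n$ forces $\overline{a^2 2^{n+m-1}} = \overline{0}$, gives
\[M_{2^{n-m}}(\overline{a2^m},\ldots,\overline{a2^m}) = \begin{pmatrix} \overline{1} & \overline{a2^{n-1}} \\ \overline{-a2^{n-1}} & \overline{1} \end{pmatrix}.\]
As $a$ is odd, $\overline{a2^{n-1}} \neq \overline{0}$, so this matrix is distinct from $\pm Id$; hence $(\overline{a2^m},\ldots,\overline{a2^m}) \in (\mathbb{Z}/N\mathbb{Z})^{2^{n-m}}$ is not a solution and $h \nmid 2^{n-m}$. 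A power of $2$ that divides $2^{n-m+1}$ but not $2^{n-m}$ can only be $2^{n-m+1}$, so $h = 2^{n-m+1}$. Combined with the previous paragraph, the $\overline{a2^m}$-monomial minimal solution is reducible of size $2^{n-m+1}$.

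The analytically demanding part — verifying that the block $(\overline{a2^{n-1}},\overline{a2^m},\ldots,\overline{a2^{n-1}})$ solves \eqref{p} — is carried out in Lemma~\ref{313}, so what remains is essentially bookkeeping: matching the $\oplus$-identity to Definition~\ref{21}, confirming that both blocks have size at least $3$ (this is exactly where $n \geq 4$ and $2 \leq m \leq n-2$, i.e. $n-m \geq 2$, are needed), and the short parity argument fixing $h$. The one point requiring genuine care is that $M_{2^{n-m}}(\overline{a2^m},\ldots,\overline{a2^m}) \neq \pm Id$, that is, that $\overline{a2^{n-1}} \neq \overline{0}$, which is precisely where the oddness of $a$ is used; as a consistency check, squaring this matrix returns $Id$ because $\overline{a^2 2^{2n-2}} = \overline{0}$ and $\overline{2a\,2^{n-1}} = \overline{a2^n} = \overline{0}$, in agreement with $2^{n-m+1}$ being a genuine solution size.
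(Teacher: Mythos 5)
Your proof is correct. Its first half — the decomposition $(\overline{a2^m},\ldots,\overline{a2^m})=(\overline{a2^m-a2^{n-1}},\overline{a2^m},\ldots,\overline{a2^m},\overline{a2^m-a2^{n-1}})\oplus(\overline{a2^{n-1}},\overline{a2^m},\ldots,\overline{a2^m},\overline{a2^{n-1}})$, with the right block a solution by Lemma \ref{313} and both blocks of size at least $3$ — is exactly the paper's reducibility argument. Where you genuinely diverge is the minimality step. The paper notes that the complementary block of size $2^{n-m}$ is itself a solution (by the additivity property of $\oplus$) whose end entries $\overline{a2^m-a2^{n-1}}$ differ from $\overline{a2^m}$ because $a$ is odd, and then invokes Lemma \ref{33} i) to conclude that $h$ cannot divide $2^{n-m}$. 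You instead re-enter the proof of Lemma \ref{313} and read off $M_{2^{n-m}}(\overline{a2^m},\ldots,\overline{a2^m})=\begin{pmatrix}\overline{1} & \overline{a2^{n-1}}\\ \overline{-a2^{n-1}} & \overline{1}\end{pmatrix}\neq \pm Id$, so that $2^{n-m}$ is not a monomial solution size at all; combined with your (correct) observation that the set of sizes of $\overline{a2^m}$-monomial solutions is the subgroup $h\mathbb{Z}$, this pins down $h=2^{n-m+1}$. Both mechanisms are sound, and your computation is right ($n-m-1\geq 1$ makes the sign $+1$, and $n+m-1\geq n$ kills the diagonal correction). The paper's route has the advantage of quoting only the \emph{statement} of an auxiliary lemma and of being the same pattern as Proposition \ref{311} for odd primes; your route avoids Lemma \ref{33} entirely, but it rests on an intermediate formula that lives inside the proof of Lemma \ref{313} rather than in its statement, so a self-contained write-up would have to restate that computation. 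It is worth noting that your mechanism — evaluate the monomial product at half the candidate size and check it is not $\pm Id$ — is precisely what the paper itself does later for the remaining case $m=1$ (Lemme \ref{317} feeding Proposition \ref{318}), so your version has the mild aesthetic benefit of treating the cases $2\leq m\leq n-2$ and $m=1$ by one uniform idea.
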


\begin{proof}

$(\overline{a2^{m}},\ldots,\overline{a2^{m}}) \in (\mathbb{Z}/N\mathbb{Z})^{2^{n-m+1}}$ est solution de \eqref{p} (voir lemme \ref{39}).
\\
\\On a \[(\overline{a2^{m}},\ldots,\overline{a2^{m}})=(\overline{a2^{m}-a2^{n-1}},\overline{a2^{m}},\ldots,\overline{a2^{m}},\overline{a2^{m}-a2^{n-1}}) \oplus (\overline{a2^{n-1}},\overline{a2^{m}},\ldots,\overline{a2^{m}},\overline{a2^{n-1}}),\] avec \[(\overline{a2^{n-1}},\overline{a2^{m}},\ldots,\overline{a2^{m}},\overline{a2^{n-1}}) \in (\mathbb{Z}/N\mathbb{Z})^{2^{n-m}+2}\] et \[(\overline{a2^{m}-a2^{n-1}},\overline{a2^{m}},\ldots,\overline{a2^{m}},\overline{a2^{m}-a2^{n-1}}) \in (\mathbb{Z}/N\mathbb{Z})^{2^{n-m}}.\] 
\\
\\De plus, par le lemme précédent, $(\overline{a2^{n-1}},\overline{a2^{m}},\ldots,\overline{a2^{m}},\overline{a2^{n-1}}) \in (\mathbb{Z}/N\mathbb{Z})^{2^{n-m}+2}$ est une solution de \eqref{p} de taille supérieure à 3. $(\overline{a2^{m}-a2^{n-1}},\overline{a2^{m}},\ldots,\overline{a2^{m}},\overline{a2^{m}-a2^{n-1}}) \in (\mathbb{Z}/N\mathbb{Z})^{2^{n-m}}$ est de taille supérieure à 3 (car $m \leq n-2$). 
\\
\\Donc, $(\overline{a2^{m}},\ldots,\overline{a2^{m}}) \in (\mathbb{Z}/N\mathbb{Z})^{2^{n-m+1}}$ est une solution réductible de \eqref{p}. Il nous reste à montrer que celle-ci est minimale.
\\
\\Notons $h$ la taille de la solution $\overline{a2^{m}}$-monomiale minimale de \eqref{p}. $h$ divise $2^{n-m+1}$. 
\\
\\De plus, $(\overline{a2^{m}-a2^{n-1}},\overline{a2^{m}},\ldots,\overline{a2^{m}},\overline{a2^{m}-a2^{n-1}}) \in (\mathbb{Z}/N\mathbb{Z})^{2^{n-m}}$ est une solution de \eqref{p}. Puisque $a$ est impair, $\overline{a2^{m}-a2^{n-1}} \neq \overline{a2^{m}}$ . Ainsi, par le lemme \ref{33} i), $h$ ne divise pas $2^{n-m}$. Donc, $h=2^{n-m+1}$.
\\
\\Ainsi, la solution $\overline{a2^{m}}$-monomiale minimale de \eqref{p} est réductible de taille $2^{n-m+1}$.

\end{proof}

Il nous reste à considérer les cas des solutions $\overline{2a}$-monomiales minimales de $(E_{2^{n}})$ lorsque $n \geq 2$. Comme précédemment, on commence par quelques résultats préliminaires :

\begin{lemma}
\label{315}

Soient $n \in \mathbb{N}^{*}$ et $k \in [\![1;n]\!]$, $\frac{n}{{\rm pgcd}(n,k)}$ divise $\binom{n}{k}$.

\end{lemma}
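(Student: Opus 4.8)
Le plan est de partir de l'identité classique $k\binom{n}{k} = n\binom{n-1}{k-1}$, valable pour $1 \leq k \leq n$, qui se vérifie immédiatement à partir de l'écriture factorielle des coefficients binomiaux : en effet, $k \cdot \frac{n!}{k!(n-k)!} = \frac{n \cdot (n-1)!}{(k-1)!(n-k)!} = n\binom{n-1}{k-1}$. C'est cette identité qui fait le lien entre le facteur $n$ que l'on cherche à exploiter et le facteur $k$ dont on veut se débarrasser via le pgcd.

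Je poserais ensuite $d = {\rm pgcd}(n,k)$ et j'écrirais $n = dn'$ et $k = dk'$ avec ${\rm pgcd}(n',k') = 1$. En reportant ces écritures dans l'identité ci-dessus et en simplifiant par le facteur commun $d$, on obtient $k'\binom{n}{k} = n'\binom{n-1}{k-1}$. Cette dernière égalité montre en particulier que $n'$ divise le produit $k'\binom{n}{k}$.

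Il ne reste plus qu'à invoquer le lemme de Gauss : puisque $n'$ et $k'$ sont premiers entre eux et que $n'$ divise $k'\binom{n}{k}$, on en déduit que $n'$ divise $\binom{n}{k}$. Comme $n' = \frac{n}{{\rm pgcd}(n,k)}$, c'est précisément l'énoncé recherché.

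La preuve est entièrement élémentaire et je n'anticipe pas d'obstacle sérieux : le seul choix déterminant est celui de la bonne identité binomiale, c'est-à-dire celle qui fait apparaître un facteur $n$ à droite et un facteur $k$ à gauche, de sorte qu'après simplification par le pgcd le lemme de Gauss s'applique directement.
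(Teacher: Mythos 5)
Your proof is correct and takes essentially the same route as the paper: the identity $k\binom{n}{k}=n\binom{n-1}{k-1}$, division by $d={\rm pgcd}(n,k)$, and one application of Gauss's lemma. The only (inessential) difference is that the paper applies Gauss to conclude $\frac{k}{d}$ divides $\binom{n-1}{k-1}$ and then reads off the result, whereas you apply it directly to the relation $\frac{n}{d} \mid \frac{k}{d}\binom{n}{k}$, which is marginally more direct.
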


\begin{proof}

\[{n \choose k}=\frac{n}{k}{n-1 \choose k-1}=\frac{\frac{n}{{\rm pgcd}(n,k)}}{\frac{k}{{\rm pgcd}(n,k)}}{n-1 \choose k-1}.\] Donc, comme ${n \choose k} \in \mathbb{N}^{*}$, on a $\frac{k}{{\rm pgcd}(n,k)}$ divise $\frac{n}{{\rm pgcd}(n,k)}{n-1 \choose k-1}$. Comme $\frac{k}{{\rm pgcd}(n,k)}$ et $\frac{n}{{\rm pgcd}(n,k)}$ sont premiers entre eux, on a, par le lemme de Gauss, $\frac{k}{{\rm pgcd}(n,k)}$ divise ${n-1 \choose k-1}$. Donc, $\frac{n}{{\rm pgcd}(n,k)}$ divise ${n \choose k}$.

\end{proof}

\begin{lemma}
\label{316}

Soient $(n, j) \in \mathbb{N}^{2}$, $n \geq 3$ et $3 \leq j \leq n$. $2^{n+1-j}$ divise $\binom{2^{n-1}}{j}$.

\end{lemma}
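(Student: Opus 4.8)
Le plan est d'exploiter directement le lemme \ref{315} appliqué au coefficient binomial $\binom{2^{n-1}}{j}$, c'est-à-dire en prenant $2^{n-1}$ dans le rôle de l'entier $n$ du lemme et $j$ dans celui de $k$. Cette application est licite car $1 \leq j \leq n \leq 2^{n-1}$ (la dernière inégalité venant du lemme \ref{312} iii)). On en déduit que $\frac{2^{n-1}}{{\rm pgcd}(2^{n-1},j)}$ divise $\binom{2^{n-1}}{j}$. Il s'agit ensuite de rendre explicite cette quantité, puis de la comparer à $2^{n+1-j}$.

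Pour cela, j'écrirais $j=2^{s}t$ avec $t$ impair et $s \in \mathbb{N}$. Comme $2^{s} \leq j \leq n \leq 2^{n-1}$, on a $s \leq n-1$, de sorte que ${\rm pgcd}(2^{n-1},j)=2^{s}$ et donc $2^{n-1-s}$ divise $\binom{2^{n-1}}{j}$. Il suffit alors de vérifier l'inégalité $s \leq j-2$ : celle-ci donne $n+1-j \leq n-1-s$, d'où $2^{n+1-j}$ divise $2^{n-1-s}$, qui divise lui-même $\binom{2^{n-1}}{j}$, ce qui est le résultat cherché.

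Pour établir $s \leq j-2$, je distinguerais deux cas. Si $s \leq 1$, alors $s \leq 1 \leq j-2$ puisque $j \geq 3$. Si $s \geq 2$, j'appliquerais le lemme \ref{312} iv) à l'entier $s+1 \geq 3$, ce qui fournit $2^{s} \geq s+2$; combiné à $j \geq 2^{s}$, on obtient $j \geq s+2$, soit $s \leq j-2$. La seule véritable étape de fond est ainsi l'identification de ${\rm pgcd}(2^{n-1},j)$ à $2^{s}$ (qui repose sur le fait que la valuation $2$-adique $s$ de $j$ reste majorée par $n-1$), le reste se ramenant à une inégalité élémentaire déjà disponible parmi les lemmes préliminaires; je n'anticipe donc pas d'obstacle sérieux.
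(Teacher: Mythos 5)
Votre preuve est correcte et suit essentiellement la même démarche que celle de l'article : appliquer le lemme \ref{315} à $\binom{2^{n-1}}{j}$, observer que ${\rm pgcd}(2^{n-1},j)$ est une puissance $2^{s}$ de $2$, puis borner l'exposant par $s \leq j-2$ pour conclure que $2^{n+1-j}$ divise $2^{n-1-s}$, donc le coefficient binomial. La seule différence est cosmétique : là où vous réutilisez le lemme \ref{312} iv) (appliqué à $s+1$) pour obtenir $2^{s} \geq s+2$, l'article traite le cas $j=3$ à part et démontre par une petite récurrence l'inégalité $2^{j-2} \geq j$ pour $j \geq 4$.
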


\begin{proof}

Par le lemme \ref{315}, $\frac{2^{n-1}}{{\rm pgcd}(2^{n-1},j)}$ divise $\binom{2^{n-1}}{j}$. Il existe $u \in \mathbb{N}$ tel que ${\rm pgcd}(2^{n-1},j)=2^{u}$. Si $j=3$ alors $j$ est impair et $u=0$. En particulier, $2^{n-1}$ divise $\binom{2^{n-1}}{j}$ et donc $2^{n-2}$ également. On suppose donc, à partir de maintenant, $n \geq 4$ et $j \geq 4$.
\\
\\Montrons que si $j \geq 4$ on a $2^{j-2} \geq j$. On procède par récurrence sur $j$. $2^{4-2}=2^{2}=4$. Supposons qu'il existe un $j \geq 4$ tel que $2^{j-2} \geq j$. $2^{j+1-2}=2 \times 2^{j-2} \geq 2j \geq j+1$, ce qui prouve le résultat par récurrence.
\\
\\Ainsi, $u \leq j-2$ et donc $2^{n+1-j}$ divise $\frac{2^{n-1}}{{\rm pgcd}(2^{n-1},j)}$. On en déduit que $2^{n+1-j}$  divise $\binom{2^{n-1}}{j}$.

\end{proof}

\begin{lemma}
\label{317}

Soient $n \geq 3$, $N=2^{n+1}$ et $a$ un entier impair. $M_{2^{n}}(\overline{2a},\ldots,\overline{2a})=\begin{pmatrix}
   \overline{1+2^{n}a^{2}} & \overline{2^{n}a} \\
   \overline{-2^{n}a} & \overline{1+2^{n}a^{2}}
\end{pmatrix}$.

\end{lemma}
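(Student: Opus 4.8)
The plan is to follow the pattern of Lemmes~\ref{39}, \ref{310} and \ref{313}, pushing the binomial expansion one order further since we now work modulo $2^{n+1}$ instead of $2^{n}$. First I would rewrite the product as a power of the square of the elementary block. Setting $C=\begin{pmatrix} 2a^{2} & -a \\ a & 0\end{pmatrix}$, a direct computation gives
\[\begin{pmatrix} \overline{2a} & \overline{-1} \\ \overline{1} & \overline{0}\end{pmatrix}^{2}=\begin{pmatrix} \overline{4a^{2}-1} & \overline{-2a} \\ \overline{2a} & \overline{-1}\end{pmatrix}=\overline{-Id+2C},\]
so that, $-Id$ being central, the binomial theorem yields
\[M_{2^{n}}(\overline{2a},\ldots,\overline{2a})=\overline{(-Id+2C)^{2^{n-1}}}=\overline{\sum_{j=0}^{2^{n-1}}\binom{2^{n-1}}{j}(-1)^{2^{n-1}-j}2^{j}C^{j}}.\]

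The second step is to discard every term of index $j\geq 3$. For $3\leq j\leq n$, Lemme~\ref{316} gives that $2^{n+1-j}$ divides $\binom{2^{n-1}}{j}$, hence $2^{n+1}=2^{n+1-j}\cdot 2^{j}$ divides $\binom{2^{n-1}}{j}2^{j}$; for $j\geq n+1$ the factor $2^{j}$ is already a multiple of $2^{n+1}$. Since $C$ has integer entries, all these terms are $\overline{0}$, and the two ranges exhaust $3\leq j\leq 2^{n-1}$ because $2^{n-1}\geq n+1$ for $n\geq 3$ (Lemme~\ref{312}~iv)).

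It remains to evaluate the three surviving terms $j=0,1,2$ modulo $2^{n+1}$, which is the only genuinely new point compared with the earlier lemmas. As $n\geq 3$, the exponent $2^{n-1}$ is even, so the $j=0$ term is $\overline{Id}$. The $j=1$ term equals $-2^{n-1}\cdot 2\,C=-2^{n}C$, whose diagonal entry $-2^{n+1}a^{2}$ vanishes, leaving $\begin{pmatrix}\overline{0} & \overline{2^{n}a}\\ \overline{-2^{n}a} & \overline{0}\end{pmatrix}$. For the $j=2$ term one has $\binom{2^{n-1}}{2}2^{2}=2^{n}(2^{n-1}-1)\equiv -2^{n}\pmod{2^{n+1}}$, so it reduces to $\overline{-2^{n}C^{2}}$; computing $C^{2}=\begin{pmatrix} 4a^{4}-a^{2} & -2a^{3} \\ 2a^{3} & -a^{2}\end{pmatrix}$ and reducing modulo $2^{n+1}$ gives $\overline{-2^{n}C^{2}}=\overline{2^{n}a^{2}\,Id}$, which supplies exactly the diagonal correction. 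Summing the three contributions yields the announced matrix. I expect the bookkeeping of this $j=2$ term to be the main subtlety: one must track precisely which parts of $\binom{2^{n-1}}{2}2^{2}C^{2}$ survive modulo $2^{n+1}$, whereas in Lemmes~\ref{39}, \ref{310} and \ref{313} everything beyond $j=1$ collapsed. Note also that the oddness of $a$ plays no role in the identity itself.
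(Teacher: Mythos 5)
Your proof is correct and follows essentially the same route as the paper: the same rewriting of the product as $\overline{(-Id+2C)^{2^{n-1}}}$ (the paper calls the matrix $D$), the same binomial expansion with terms $j\geq 3$ killed via Lemme~\ref{316} (for $3\leq j\leq n$) and divisibility by $2^{j}$ (for $j\geq n+1$, using Lemme~\ref{312}~iv)), and the same reduction $\binom{2^{n-1}}{2}2^{2}\equiv -2^{n}\pmod{2^{n+1}}$ before summing the $j=0,1,2$ contributions. Your closing remark that the oddness of $a$ is not needed for the identity is also accurate; the paper's proof does not use it either.
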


\begin{proof}

En procédant comme dans le lemme \ref{39}, on a en posant $D=\begin{pmatrix}
   2a^{2} & -a \\
   a     & 0
\end{pmatrix}$ et en utilisant $\overline{2^{n+1}}=\overline{0}$ :

\begin{eqnarray*}
M_{2^{n}}(\overline{2a},\ldots,\overline{2a}) &=& \overline{(-Id+2 D)^{2^{n-1}}} \\
																				      &=& \overline{ \sum_{j=0}^{2^{n-1}} \binom{2^{n-1}}{j} (-1)^{-j+2^{n-1}}2^{j}D^{j}}~{\rm (bin\hat{o}me~de~Newton)} \\
                                              &=& \overline{(-1)^{2^{n-1}}Id+(-1)^{-1+2^{n-1}}(2^{n-1})2D+(-1)^{-2+2^{n-1}}\binom{2^{n-1}}{2}4D^{2}} \\
                                              &+& \overline{\sum_{j=3}^{n} \binom{2^{n-1}}{j} (-1)^{-j+2^{n-1}}2^{j}D^{j}+ \sum_{j=n+1}^{2^{n-1}} \binom{2^{n-1}}{j} (-1)^{-j+2^{n-1}}2^{j}D^{j}} \\
                                              & & (2^{n-1}\geq n+1,~{\rm lemme}~\ref{312}~iii)) \\
																							&=& \overline{Id-2^{n}D+(2^{n}(2^{n-1}-1))D^{2}}~(2^{n+1-j}\mid \binom{2^{n-1}}{j},~{\rm lemme}~\ref{316}) \\
																							&=& \overline{Id-2^{n}D-2^{n}D^{2}}~(2n-1 \geq n+1) \\
																							&=& \overline{\begin{pmatrix}
   1 & 0 \\
   0 & 1
\end{pmatrix}-\begin{pmatrix}
   0 & -2^{n}a \\
   2^{n}a     & 0
\end{pmatrix}-\begin{pmatrix}
   -2^{n}a^{2} & 0 \\
    0     & -2^{n}a^{2}
\end{pmatrix}} \\
                                              &=& \begin{pmatrix}
   \overline{1+2^{n}a^{2}} & \overline{2^{n}a} \\
   \overline{-2^{n}a} & \overline{1+2^{n}a^{2}}
\end{pmatrix}. \\
\end{eqnarray*}

\end{proof}

\begin{proposition}
\label{318}

Soient $n \geq 2$, $N=2^{n}$ et $a$ un entier impair. La solution $\overline{2a}$-monomiale minimale de \eqref{p} est de taille $2^{n}$.

\end{proposition}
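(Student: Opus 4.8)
The plan is to combine the explicit matrix computation of Lemma~\ref{317} with the elementary divisibility property of the sizes of monomial solutions. Write $R=\left(\begin{smallmatrix}\overline{2a}&-1\\ 1&0\end{smallmatrix}\right)$, so that a constant tuple $(\overline{2a},\ldots,\overline{2a})$ of length $s$ is a solution of \eqref{p} exactly when $R^{s}=\pm Id$. By Lemma~\ref{39} (taken with $l=2$ and $m=1$) the tuple of length $2^{n}$ is a solution, so a $\overline{2a}$-monomiale minimale solution exists; denote its size by $h$. The first step is to record that $h$ divides $2^{n}$: if $R^{s}=\pm Id$, then Euclidean division $s=qh+r$ gives $R^{r}=R^{s}(R^{h})^{-q}=\pm Id$ with $0\le r<h$, and minimality forces $r=0$. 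In particular $h$ is a power of $2$, and proving $h=2^{n}$ amounts to showing $h\nmid 2^{n-1}$, i.e.\ that the constant tuple of length $2^{n-1}$ is \emph{not} a solution.

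For $n\ge 4$ I would apply Lemma~\ref{317} with its index equal to $n-1$: then its modulus $2^{(n-1)+1}=2^{n}$ matches our $N$ and the hypothesis $n-1\ge 3$ holds, so it yields
\[
M_{2^{n-1}}(\overline{2a},\ldots,\overline{2a})=\begin{pmatrix}\overline{1+2^{n-1}a^{2}}&\overline{2^{n-1}a}\\ \overline{-2^{n-1}a}&\overline{1+2^{n-1}a^{2}}\end{pmatrix}\quad\text{dans }\mathbb{Z}/2^{n}\mathbb{Z}.
\]
Because $a$ is odd, $2^{n-1}a\equiv 2^{n-1}\pmod{2^{n}}$, so the off-diagonal entry $\overline{2^{n-1}a}=\overline{2^{n-1}}$ is non-zero. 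Hence this matrix has non-zero off-diagonal entries and cannot equal $\pm Id$; thus $R^{2^{n-1}}\neq\pm Id$, the length-$2^{n-1}$ tuple is not a solution, and therefore $h\nmid 2^{n-1}$. Combined with $h\mid 2^{n}$ and $h$ a power of $2$, this forces $h=2^{n}$.

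It remains to treat the small cases $n\in\{2,3\}$, for which Lemma~\ref{317} does not apply since its hypothesis $n-1\ge 3$ fails. Here I would simply observe that for odd $a$ one has $\overline{2a}=\overline{2}$ in $\mathbb{Z}/4\mathbb{Z}$ and $\overline{2a}\in\{\overline{2},\overline{-2}\}$ in $\mathbb{Z}/8\mathbb{Z}$, so the solution is a $\pm\overline{2}$-monomiale minimale solution; the second item of Theorem~\ref{32} then gives directly that its size is $N=2^{n}$.

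The computational heart of the argument is entirely contained in Lemma~\ref{317}, so no new delicate estimate is needed at this stage; the only points requiring care are the re-indexing of that lemma (matching its modulus $2^{n+1}$ to the present $2^{n}$, which is exactly what forces the separate treatment of $n=2,3$) and the remark that oddness of $a$ keeps $\overline{2^{n-1}a}$ non-zero modulo $2^{n}$. I expect the main obstacle to be purely organisational: making sure that the length-$2^{n-1}$ computation is the right one to contradict minimality, rather than any genuine analytic difficulty.
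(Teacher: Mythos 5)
Your proof is correct, and it is organized genuinely differently from the paper's: the paper proves this proposition by induction on $n$, whereas your argument is direct. In the paper, the base cases $n=2,3$ are handled by the théorème~\ref{32} (exactly as you do), and the inductive step works at the modulus $2^{n+1}$: the induction hypothesis, combined with reduction modulo $2^{n}$, gives $2^{n}\mid h$, hence $h\in\{2^{n},2^{n+1}\}$, and the lemme~\ref{317} (used at its stated index, i.e.\ length $2^{n}$ modulo $2^{n+1}$) excludes $h=2^{n}$. You dispense with the induction entirely: from the lemme~\ref{39} and the Euclidean-division/order argument (a fact the paper uses implicitly each time it asserts that $h$ divides the length of a known constant solution) you get $h\mid 2^{n}$, and then the re-indexed lemme~\ref{317} (its $n$ replaced by $n-1$, legitimate precisely when $n\geq 4$) shows $M_{2^{n-1}}(\overline{2a},\ldots,\overline{2a})\neq\pm Id$ modulo $2^{n}$ because the off-diagonal entry $\overline{2^{n-1}a}=\overline{2^{n-1}}$ is nonzero; this rules out every proper divisor of $2^{n}$ at once, since they all divide $2^{n-1}$. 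The computational core is identical in the two proofs --- your length-$2^{n-1}$ computation modulo $2^{n}$ is the paper's length-$2^{n}$ computation modulo $2^{n+1}$ after a shift of index --- but your organization shows that the induction, and the reduction-of-modulus step it requires, are dispensable: in the lattice of divisors of $2^{n}$ a single non-identity computation pins down $h$. Making explicit the fact that the minimal size divides every solution length is also a small gain in rigor over the paper's implicit use of it.
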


\begin{proof}

On raisonne par récurrence sur $n$.
\\
\\Si $n=2$ ou $n=3$ alors le résultat est vrai (voir Théorème \ref{32}).
\\
\\Supposons qu'il existe un $n \geq 3$ tel que la solution $\overline{2a}$-monomiale minimale de $(E_{2^{n}})$ est de taille $2^{n}$. Par le lemme \ref{39}, la taille $h$ de la solution $\overline{2a}$-monomiale minimale de $(E_{2^{n+1}})$ divise $2^{n+1}$. De plus, comme $2^{n}$ divise $2^{n+1}$, le $h$-uplet ne contenant que $2a$ est aussi une solution modulo $2^{n}$. Par hypothèse de récurrence, on a $2^{n}$ divise $h$. Ainsi, $h=2^{n}$ ou $h=2^{n+1}$. Or, par le lemme précédent, \[M_{2^{n}}(\overline{2a},\ldots,\overline{2a})=\begin{pmatrix}
   \overline{1+2^{n}a^{2}} & \overline{2^{n}a} \\
   \overline{-2^{n}a} & \overline{1+2^{n}a^{2}}
\end{pmatrix} \neq \pm Id~({\rm a~impair}).\]
\noindent Donc, $h \neq 2^{n}$ et $h=2^{n+1}$. Le résultat est démontré par récurrence.

\end{proof}

\noindent En combinant tous les résultats obtenus sur les tailles, on a le résultat suivant :

\begin{corollary}
\label{spe2}

Si $N=2^{n}$ avec $n \geq 2$, $1 \leq m \leq n$ et $a$ un entier impair alors la solution $\overline{a2^{m}}$-monomiale minimale de \eqref{p} est de taille $2^{n-m+1}$.

\end{corollary}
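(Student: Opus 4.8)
The plan is to prove the corollary by a straightforward case distinction on $m$, assembling the size computations already carried out in this section together with Theorem \ref{32}; no new matrix computation is needed. The key point to keep in mind throughout is that $a$ is odd, so that each modulus $\overline{a2^{m}}$ can be identified with a value already treated.

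First I would settle the two boundary cases. When $m=n$ we have $\overline{a2^{n}}=\overline{0}$, and since $(E_{N})$ has no solution of size $1$ while $(\overline{0},\overline{0})$ is a solution of size $2$ (Proposition \ref{spe} i) and ii)), the $\overline{0}$-monomiale minimale solution has size $2=2^{n-n+1}$. When $m=n-1$ with $n\geq 3$, oddness of $a$ gives $\overline{a2^{n-1}}=\overline{2^{n-1}}=\overline{N/2}$ (because $a$ odd makes $(a-1)2^{n-1}$ a multiple of $2^{n}$), and as $N=2^{n}\geq 8$ is even with $4\mid N$, the third item of Theorem \ref{32} furnishes a minimal solution of size $4=2^{n-(n-1)+1}$.

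Next I would invoke the two interior results of this section. The case $m=1$ is exactly Proposition \ref{318}, giving minimal size $2^{n}=2^{n-1+1}$ for every $n\geq 2$. The remaining band $2\leq m\leq n-2$ — nonempty only when $n\geq 4$ — is precisely Proposition \ref{314}, giving minimal size $2^{n-m+1}$. In all four regimes the size is the announced $2^{n-m+1}$.

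The last thing I would verify, and the only place demanding any attention, is that these four cases genuinely partition $[\![1;n]\!]$ for every $n\geq 2$ and that each invoked result is applied within its stated hypotheses. Concretely: for $n=2$ only $m\in\{1,2\}$ arise, covered by Proposition \ref{318} and the case $m=n$; for $n=3$ the middle band is empty and $m=2=n-1$ is handled by Theorem \ref{32} (note $N=8>4$ and $m\geq 2$, so there is no clash with the $m=1$ case); and for $n\geq 4$ the four sets $\{1\}$, $[\![2;n-2]\!]$, $\{n-1\}$ and $\{n\}$ tile $[\![1;n]\!]$ without overlap. The expected obstacle is therefore purely bookkeeping — ensuring the appeal to Theorem \ref{32} is restricted to $n\geq 3$ so that $N>4$ holds and $m=n-1$ does not coincide with the $m=1$ case — rather than any substantive new argument.
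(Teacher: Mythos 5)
Your proposal is correct and matches the paper's own (implicit) argument: the paper states this corollary as an immediate combination of the size results already established, namely Proposition \ref{spe} for $m=n$, Theorem \ref{32} for $m=n-1$, Proposition \ref{318} for $m=1$, and Proposition \ref{314} for $2\leq m\leq n-2$, exactly the four regimes you assemble. Your additional bookkeeping (checking the cases tile $[\![1;n]\!]$ for small $n$ and that $\overline{a2^{n-1}}=\overline{2^{n-1}}$ for $a$ odd) is precisely what the paper leaves to the reader.
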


\begin{proof}

Soit $l$ la taille de la solution $\overline{a2^{m}}$-monomiale minimale de \eqref{p}. Si $m=1$ alors, par la proposition précédente, $l=2^{n}=2^{n-1+1}$. Si $2 \leq m \leq n-2$ alors, par la proposition \ref{314}, $l=2^{n-m+1}$. Si $m=n-1$ alors $\overline{a2^{m}}=\overline{\frac{N}{2}}$ et $l=4=2^{n-(n-1)+1}$, par le théorème \ref{32}. Si $m=n$ alors $\overline{a2^{m}}=\overline{0}$ et $l=2=2^{n-n+1}$.

\end{proof}

\begin{proposition}
\label{319}

Soient $n \geq 2$, $N=2^{n}$ et $a$ un entier impair. La solution $\overline{2a}$-monomiale minimale de \eqref{p} est irréductible.

\end{proposition}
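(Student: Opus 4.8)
The plan is to mimic the proof of Proposition~\ref{35}, but with a richer set of roots playing the role of Lemma~\ref{34}. By Corollary~\ref{spe2} (case $m=1$) the $\overline{2a}$-monomial minimal solution has size $2^{n}$. For $n\in\{2,3\}$ one checks $\overline{2a}\in\{\overline{2},\overline{-2}\}$, so the result is immediate from Theorem~\ref{32}; I therefore assume $n\geq 4$ and argue by contradiction. Exactly as in Proposition~\ref{35}, a reduction $(\overline{2a},\ldots,\overline{2a})\sim(\overline{a_{1}},\ldots,\overline{a_{l}})\oplus(\overline{b_{1}},\ldots,\overline{b_{l'}})$ with $l,l'\geq 3$ and $l+l'=2^{n}+2$ forces both factors to be solutions with all interior entries equal to $\overline{2a}$, and Proposition~\ref{31} gives $\overline{a_{1}}=\overline{a_{l}}=:\overline{\alpha}$, $\overline{b_{1}}=\overline{b_{l'}}=:\overline{\beta}$ with $\overline{\alpha}(\overline{\alpha}-\overline{2a})=\overline{\beta}(\overline{\beta}-\overline{2a})=\overline{0}$ and $\overline{\alpha}+\overline{\beta}=\overline{2a}$.

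The first real step, replacing Lemma~\ref{34}, is to determine the roots of $X(X-\overline{2a})$ in $\mathbb{Z}/2^{n}\mathbb{Z}$. A short $2$-adic valuation argument (a root must be even, $x=2y$, whence $y(y-a)\equiv 0 \pmod{2^{n-2}}$, and $y,y-a$ have opposite parity because $a$ is odd) shows that for $n\geq 3$ the roots are exactly $\overline{0},\ \overline{2^{n-1}},\ \overline{2a},\ \overline{2a+2^{n-1}}$. Checking all pairs, the only ones with $\overline{\alpha}+\overline{\beta}=\overline{2a}$ are $\{\overline{\alpha},\overline{\beta}\}=\{\overline{0},\overline{2a}\}$ and $\{\overline{\alpha},\overline{\beta}\}=\{\overline{2^{n-1}},\overline{2a+2^{n-1}}\}$.

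In the first case one capped tuple, say the $a$-tuple, is $(\overline{0},\overline{2a},\ldots,\overline{2a},\overline{0})$ of size $l$; adding $(\overline{0},\overline{0},\overline{0},\overline{0})$ and using the sum property shows $(\overline{2a},\ldots,\overline{2a})$ of size $l-2$ is a solution, and since $3\leq l\leq 2^{n}-1$ this contradicts minimality (the degenerate values $l-2\in\{1,2\}$ being excluded by Proposition~\ref{spe}). The second case is the crux. Here I first reduce modulo $2^{n-1}$: the tuple capped by $\overline{2a+2^{n-1}}$ becomes the pure monomial tuple mod $2^{n-1}$, whose minimal size is $2^{n-1}$ by Corollary~\ref{spe2}, so its length is a multiple of $2^{n-1}$; combined with $l+l'=2^{n}+2$ this pins the two sizes to $2^{n-1}$ and $2^{n-1}+2$. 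It then remains to contradict that the tuple capped by $\overline{2^{n-1}}$, of size $2^{n-1}+2$, is a solution. Evaluating $M_{2^{n-1}}(\overline{2a},\ldots,\overline{2a})\pmod{2^{n}}$ by the shifted form of Lemma~\ref{317} (applied with $n-1\geq 3$) and then multiplying on the left and right by $M_{1}(\overline{2^{n-1}})$, I obtain
\[M_{2^{n-1}+2}(\overline{2^{n-1}},\overline{2a},\ldots,\overline{2a},\overline{2^{n-1}})=\overline{(-1-2^{n-1})}\,Id,\]
which is not $\pm Id$ for $n\geq 3$ because $2^{n-1}\not\equiv 0$ and $2+2^{n-1}\not\equiv 0 \pmod{2^{n}}$. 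Together with the symmetric subcase this completes the contradiction.

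The main obstacle is this second case. Contrary to the odd-prime situation of Lemma~\ref{34}, the equation $\overline{\alpha}+\overline{\beta}=\overline{2a}$ admits the extra ``off-diagonal'' pair $\{\overline{2^{n-1}},\overline{2a+2^{n-1}}\}$, and this pair is not eliminated by Lemma~\ref{33} at the modulus $2^{n}$ since the resulting block length $2^{n-1}+2$ is not congruent to $0,1,2$ modulo $2^{n}$. The decisive idea is the two-stage mechanism: descend to modulus $2^{n-1}$ to fix the two block sizes, then return to modulus $2^{n}$ for the explicit matrix evaluation built on Lemma~\ref{317}, the clean value $\overline{(-1-2^{n-1})}\,Id$ being exactly what rules the case out.
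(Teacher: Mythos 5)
Your proof is correct, and it takes a genuinely different route from the paper's. The paper argues by induction on $n$: the induction hypothesis (irreducibility modulo $2^{n}$) forces both blocks of a putative reduction modulo $2^{n+1}$ to have size at least $2^{n}$, Lemma~\ref{33}~ii) and iii) then pin the sizes to $2^{n}$ and $2^{n}+2$ and force the cap of the larger block into $\{\overline{0},\overline{2^{n}}\}$, and these two caps are eliminated respectively by Proposition~\ref{318} and by the evaluation $M_{2^{n}+2}(\overline{2a},\ldots,\overline{2a},\overline{2^{n}},\overline{2^{n}})=\overline{(-2^{n}a^{2}-1)}\,Id\neq\pm Id$ from Lemma~\ref{317}. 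You instead argue directly for each $n\geq 4$: your $2$-adic classification of the roots of $X(X-\overline{2a})$ (correct, and the natural even analogue of Lemma~\ref{34}) together with the cap-sum constraint $\overline{\alpha}+\overline{\beta}=\overline{2a}$ leaves only the pairs $\{\overline{0},\overline{2a}\}$ and $\{\overline{2^{n-1}},\overline{2a+2^{n-1}}\}$; the first dies exactly as in Proposition~\ref{35}, and for the second you descend to modulus $2^{n-1}$, where Corollary~\ref{spe2} pins the block sizes to $2^{n-1}$ and $2^{n-1}+2$, then return to modulus $2^{n}$ and evaluate $M_{2^{n-1}+2}(\overline{2^{n-1}},\overline{2a},\ldots,\overline{2a},\overline{2^{n-1}})=\overline{(-1-2^{n-1})}\,Id$, which is the modulus-$2^{n}$ twin of the paper's computation (your scalar is $-(1+2^{n-1}a^{2})=-(1+2^{n-1})$ since $a$ is odd, and indeed not $\pm\overline{1}$). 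Structurally, the paper's induction lets it bypass any root analysis by reusing the Lemma~\ref{33} machinery, while your version is non-inductive once the size results are in hand, parallels Proposition~\ref{35} much more closely, and exhibits explicitly the one \emph{off-diagonal} root pair $\{\overline{2^{n-1}},\overline{2a+2^{n-1}}\}$ that makes the case $p=2$, $m=1$ delicate. The facts you use silently --- that the size of any $\overline{2a}$-monomial solution is a multiple of the minimal one, and Proposition~\ref{spe} for the degenerate sizes $l-2\in\{1,2\}$ --- are standard and used in the same implicit way throughout the paper, so there is no gap.
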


\begin{proof}

On raisonne par récurrence sur $n$.
\\
\\Si $n=2$ ou $n=3$ alors le résultat est vrai (voir Théorème \ref{32}).
\\
\\Supposons qu'il existe un $n \geq 3$ tel que la solution $\overline{2a}$-monomiale minimale de $(E_{2^{n}})$ est irréductible. Supposons par l'absurde que la solution $\overline{2a}$-monomiale minimale de $(E_{2^{n+1}})$ est réductible. Il existe deux solutions de $(E_{2^{n+1}})$ de la forme $(\overline{x},\overline{2a},\ldots,\overline{2a},\overline{x})$ et de taille $h_{1}$ et $h_{2}$ avec $h_{1}, h_{2} \leq 2^{n+1}-1$ et $h_{1}+h_{2}=2^{n+1}+2$ (par la proposition précédente). De plus, comme $2^{n}$ divise $2^{n+1}$, ce $h_{1}$ (resp. $h_{2}$)-uplet est aussi une solution modulo $2^{n}$. 
\\
\\Par hypothèse de récurrence, la solution $\overline{2a}$-monomiale minimale de $(E_{2^{n}})$ est irréductible. Ainsi, on a $h_{1}, h_{2} \geq 2^{n}$ et donc on a trois possibilités :

\begin{itemize}
\item $h_{1}=2^{n}$ et $h_{2}=2^{n}+2$;
\item $h_{1}=2^{n}+1$ et $h_{2}=2^{n}+1$, impossible par le lemme \ref{33} ii);
\item $h_{1}=2^{n}+2$ et $h_{2}=2^{n}$.
\end{itemize}

\noindent Donc, il existe une solution de $(E_{2^{n+1}})$ de la forme $(\overline{x},\overline{2a},\ldots,\overline{2a},\overline{x})$ de taille $2^{n}+2$. Puisque ce $2^{n}+2$-uplet est une solution modulo $2^{n}$, on a, par le lemme \ref{33} iii), $2^{n}$ divise $x$. Donc, $\overline{x}=\overline{2^{n}}$ ou $\overline{x}=\overline{0}$.
\\
\\Supposons par l'absurde $\overline{x}=\overline{0}$. Comme $(\overline{0},\overline{2a},\ldots,\overline{2a},\overline{0})\sim (\overline{2a},\ldots,\overline{2a}) \oplus (\overline{0},\overline{0},\overline{0},\overline{0})$, $(\overline{2a},\ldots,\overline{2a}) \in (\mathbb{Z}/2^{n+1}\mathbb{Z})^{2^{n}}$ est une solution de $(E_{2^{n+1}})$. Ceci est absurde, par la proposition \ref{318}. Ainsi, $\overline{x}=\overline{2^{n}}$.
\\
\\Or,
 \begin{eqnarray*}
M_{2^{n}+2}(\overline{2a},\ldots,\overline{2a},\overline{2^{n}},\overline{2^{n}}) &=& M_{2}(\overline{2^{n}},\overline{2^{n}})M_{2^{n}}(\overline{2a},\ldots,\overline{2a}) \\
                                                                                  &=& \begin{pmatrix}
   \overline{-1} & \overline{-2^{n}} \\
   \overline{2^{n}} & \overline{-1}
\end{pmatrix}\begin{pmatrix}
   \overline{1+2^{n}a^{2}} & \overline{2^{n}a} \\
   \overline{-2^{n}a} & \overline{1+2^{n}a^{2}}
\end{pmatrix},~{\rm lemme}~\ref{317}  \\
                                                                                  &=& \begin{pmatrix}
   \overline{-1-2^{n}a^{2}} & \overline{-2^{n}(a+1)} \\
   \overline{2^{n}(1+a)} & \overline{-2^{n}a^{2}-1}
\end{pmatrix} \\
                                                                                  &=& \overline{(-2^{n}a^{2}-1)} Id~({\rm car}~a+1~{\rm pair}) \\
                                                                                  & \neq & \pm Id~({\rm car}~a~{\rm impair}). \\
\end{eqnarray*}

Ceci est absurde. Ainsi, la solution $\overline{2a}$-monomiale minimale de $(E_{2^{n+1}})$ est irréductible.
Le résultat est démontré par récurrence.

\end{proof}

On peut maintenant rassembler tous les éléments pour achever la démonstration de notre résultat principal.

\begin{proof}[démonstration du théorème \ref{26} ii)]

Soient $N=2^{n}$ avec $n \geq 1$ et $k \in [\![0;2^{n}-1]\!]$. Si $n \in \{1, 2, 3\}$ alors le résultat est vrai. On suppose donc $n \geq 4$. 
\\
\\Supposons $k$ impair. $\overline{k}$ est inversible, et, par la proposition \ref{35}, la solution  $\overline{k}$-monomiale minimale de \eqref{p} est irréductible.
\\
\\Supposons $k$ pair. La solution $\overline{0}$-monomiale minimale de \eqref{p} est réductible. On suppose $\overline{k} \neq \overline{0}$. Il existe un entier $1 \leq m \leq n-1$ et un entier $a$ impair tels que $k=a2^{m}$. On distingue plusieurs cas :
\begin{itemize}
\item si $m=n-1$ alors $a=1$ et, par le théorème \ref{32}, la solution  $\overline{k}$-monomiale minimale de \eqref{p} est irréductible;
\item si $m=1$ alors, par la proposition \ref{319}, la solution  $\overline{k}$-monomiale minimale de \eqref{p} est irréductible;
\item si $2 \leq m \leq n-2$ alors, par la proposition \ref{314}, la solution  $\overline{k}$-monomiale minimale de \eqref{p} est réductible.
\end{itemize}

\noindent Ceci conclut la classification. 
\\
\\Si $n=1$, \eqref{p} a une seule solution monomiale minimale irréductible. Si $n=2$, \eqref{p} a trois solutions monomiales minimales irréductibles. On suppose maintenant $n \geq 3$. Il y a $\varphi(2^{n})=2^{n}-2^{n-1}$ solutions $\overline{k}$-monomiales minimales de \eqref{p} avec $k$ impair. Il y a une solution $\overline{2^{n-1}}$-monomiale minimale de \eqref{p}. Il y a $2^{n-2}$ solutions $\overline{2a}$-monomiales minimales de \eqref{p} avec $a$ impair. En effet, pour avoir une telle solution, on cherche $a$ impair tel que $2a \leq 2^{n}$ c'est-à-dire $a$ impair inférieur à $2^{n-1}$. Comme pour $a$ impair, $\overline{2a} \neq \overline{2^{n-1}}$ (puisque $n \geq 3$), \eqref{p} a $2^{n}-2^{n-1}+2^{n-2}+1=3 \times 2^{n-2}+1$ solutions monomiales minimales irréductibles.

\end{proof}

\section{Application au calcul de la taille de certaines solutions monomiales minimales}
\label{taille}

Dans un précédent article, on avait établi que si $N=l^{n}$ alors le $2l^{n-1}$-uplet constitué uniquement de $\overline{l}$ est solution de \eqref{p} (voir \cite{M} proposition 3.14). Cependant, les questions de l'irréductibilité et de la minimalité restaient ouvertes et avaient été réunies dans un problème ouvert (voir \cite{M} problème 2). On avait ensuite démontré que cette solution est irréductible si et seulement si $l=2$ (voir \cite{M2} Théorème 3.9). On va maintenant résoudre entièrement ce problème en calculant la taille de certaines solutions monomiales minimales.  

\begin{proposition}
\label{41}

Soit $N=p_{1}^{\alpha_{1}} \ldots p_{r}^{\alpha_{r}}$ avec $r \geq 1$, $p_{i}$ des nombres premiers deux à deux distincts et $\alpha_{i}$ des entiers naturels non nuls. Soit $k=ap_{1}^{\beta_{1}} \ldots p_{r}^{\beta_{r}}$ avec, pour tout $i$, $1 \leq \beta_{i} \leq \alpha_{i}$ et $a$ un entier qui n'est divisible par aucun des $p_{i}$. La solution $\overline{k}$-monomiale minimale de \eqref{p} est de taille $2p_{1}^{\alpha_{1}-\beta_{1}} \ldots p_{r}^{\alpha_{r}-\beta_{r}}$.

\end{proposition}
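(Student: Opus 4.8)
\noindent The plan is to reduce to the prime-power cases settled in Proposition~\ref{311} and Corollary~\ref{spe2} and then to glue them together by the Chinese remainder theorem, the one delicate point being the control of the ambiguous global sign. Since all entries equal $\overline{k}$, one has $M_{h}(\overline{k},\ldots,\overline{k})=M_{1}(\overline{k})^{h}$, so the sought size $h$ is the least integer $h\geq 1$ with $M_{1}(\overline{k})^{h}=\pm Id$ in $SL_{2}(\mathbb{Z}/N\mathbb{Z})$. Under $SL_{2}(\mathbb{Z}/N\mathbb{Z})\cong\prod_{i=1}^{r}SL_{2}(\mathbb{Z}/p_{i}^{\alpha_{i}}\mathbb{Z})$, reduction modulo $p_{i}^{\alpha_{i}}$ sends $\overline{k}$ to $\overline{a_{i}p_{i}^{\beta_{i}}}$ with $a_{i}=a\prod_{j\neq i}p_{j}^{\beta_{j}}$ prime to $p_{i}$; writing $h_{i}$ for the size of the corresponding monomial minimal solution of $(E_{p_{i}^{\alpha_{i}}})$, Proposition~\ref{311} (for odd $p_{i}$) and Corollary~\ref{spe2} (for $p_{i}=2$) yield $h_{i}=2p_{i}^{\alpha_{i}-\beta_{i}}$ in every case, the case $\beta_{i}=\alpha_{i}$ being the zero solution of size $2$.

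First I would prove ${\rm lcm}(h_{1},\ldots,h_{r})\mid h$. Reducing a size-$h$ solution modulo $p_{i}^{\alpha_{i}}$ produces a monomial solution of the same size, so $M_{1}(\overline{k})^{h}=\pm Id$ modulo $p_{i}^{\alpha_{i}}$; since the exponents $e$ with $M_{1}(\overline{k})^{e}=\pm Id$ modulo $p_{i}^{\alpha_{i}}$ form the subgroup $h_{i}\mathbb{Z}$ of $\mathbb{Z}$ (one may also argue from Lemma~\ref{33}), this forces $h_{i}\mid h$. A short valuation computation, treating separately the possible factor $p_{i}=2$, then gives ${\rm lcm}(h_{1},\ldots,h_{r})=2\prod_{j=1}^{r}p_{j}^{\alpha_{j}-\beta_{j}}$, the value $H$ claimed for the size.

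For the reverse divisibility $h\mid H$ I would evaluate $M_{1}(\overline{k})^{H}$ factor by factor. Lemma~\ref{39}, valid for every $l\geq 2$, gives $M_{1}(\overline{k})^{h_{i}}=(-1)^{p_{i}^{\alpha_{i}-\beta_{i}}}Id$ modulo $p_{i}^{\alpha_{i}}$ (the degenerate case $\beta_{i}=\alpha_{i}$ being the direct computation $M_{1}(\overline{0})^{2}=-Id$). As $H/h_{i}=\prod_{j\neq i}p_{j}^{\alpha_{j}-\beta_{j}}$, raising to that power gives
\[M_{1}(\overline{k})^{H}=(-1)^{\,p_{i}^{\alpha_{i}-\beta_{i}}\prod_{j\neq i}p_{j}^{\alpha_{j}-\beta_{j}}}Id=(-1)^{\prod_{j=1}^{r}p_{j}^{\alpha_{j}-\beta_{j}}}Id\pmod{p_{i}^{\alpha_{i}}}.\]
The crucial point is that this exponent of $-1$ does not depend on $i$, so the $r$ components share a common sign and $M_{1}(\overline{k})^{H}=\pm Id$ in $SL_{2}(\mathbb{Z}/N\mathbb{Z})$, whence $h\mid H$. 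Combined with the lower bound this gives $h=H=2\prod_{j=1}^{r}p_{j}^{\alpha_{j}-\beta_{j}}$.

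The sign bookkeeping is the step I expect to be the genuine obstacle. Each factor on its own only certifies $M_{1}(\overline{k})^{H}=\pm Id$ modulo $p_{i}^{\alpha_{i}}$, and if these signs disagreed one could conclude merely $M_{1}(\overline{k})^{2H}=Id$, leaving open the value $h=2H$; this is precisely the mechanism behind the dichotomy between $p=2$ and odd $p$ in Proposition~\ref{311} and Corollary~\ref{spe2}. The argument is rescued by the identity $p_{i}^{\alpha_{i}-\beta_{i}}\cdot(H/h_{i})=\prod_{j}p_{j}^{\alpha_{j}-\beta_{j}}$, which makes the sign uniform in $i$, so no incompatibility can arise. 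I would therefore be careful to extract from Lemma~\ref{39} the exact value of $M_{1}(\overline{k})^{h_{i}}$, and not merely the fact that the tuple is a solution, since it is this sign that governs the whole reassembly.
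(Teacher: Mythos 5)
Your proof is correct and follows essentially the same route as the paper's: reduction modulo each $p_{i}^{\alpha_{i}}$, the prime-power sizes from Proposition \ref{311} and Corollary \ref{spe2}, the exact sign $(-1)^{p_{i}^{\alpha_{i}-\beta_{i}}}$ extracted from Lemma \ref{39}, a lower bound via the lcm of the local sizes, and the Chinese remainder theorem for the upper bound. The only difference is organizational: where the paper splits into cases (all $p_{i}$ odd; $p_{1}=2$ with $\alpha_{1}=\beta_{1}$ or $\alpha_{1}>\beta_{1}$), you unify the sign bookkeeping into the single $i$-independent exponent $\prod_{j}p_{j}^{\alpha_{j}-\beta_{j}}$, which is a tidy streamlining of the same argument.
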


\begin{proof}

Soit $h$ la taille de la solution $\overline{k}$-monomiale minimale de \eqref{p}. Si $r=1$ alors, par la proposition \ref{311} et le corollaire \ref{spe2}, le résultat est vrai. On suppose maintenant $r \geq 2$. On va distinguer deux cas :
\\
\\ \underline{Premier cas :} Si tous les $p_{i}$ sont impairs. 
\\
\\Soit $i \in [\![1;r]\!]$, posons $b_{i}=a \prod_{j \neq i} p_{j}^{\beta_{j}}$. On a $k+p_{i}^{\alpha_{i}} \mathbb{Z}=b_{i}p_{i}^{\beta_{i}}+p_{i}^{\alpha_{i}} \mathbb{Z}$ et $b_{i}$ n'est pas divisible par $p_{i}$. Par la proposition \ref{311}, la	solution $(k+p_{i}^{\alpha_{i}} \mathbb{Z})$-monomiale minimale de $(E_{p_{i}^{\alpha_{i}}})$, est de taille $2p_{i}^{\alpha_{i}-\beta_{i}}$ avec \[M_{2p_{i}^{\alpha_{i}-\beta_{i}}}(k+p_{i}^{\alpha_{i}} \mathbb{Z},\ldots,k+p_{i}^{\alpha_{i}} \mathbb{Z})=((-1)^{p_{i}^{\alpha_{i}-\beta_{i}}}+p_{i}^{\alpha_{i}} \mathbb{Z}) Id= (-1+p_{i}^{\alpha_{i}} \mathbb{Z}) Id.\]

\noindent Comme $p_{i}^{\alpha_{i}}$ divise $N$, $h$ est un multiple de $2p_{i}^{\alpha_{i}-\beta_{i}}$. Ainsi, $h$ est un multiple de \[{\rm ppcm}(2p_{i}^{\alpha_{i}-\beta_{i}}, i \in [\![1;r]\!])=2p_{1}^{\alpha_{1}-\beta_{1}} \ldots p_{r}^{\alpha_{r}-\beta_{r}}.\]

\noindent De plus, pour tout $i \in [\![1;r]\!]$, on a 

\begin{eqnarray*}
M_{2p_{1}^{\alpha_{1}-\beta_{1}} \ldots p_{r}^{\alpha_{r}-\beta_{r}}}(k+p_{i}^{\alpha_{i}} \mathbb{Z},\ldots,k+p_{i}^{\alpha_{i}} \mathbb{Z}) &=& [M_{2p_{i}^{\alpha_{i}-\beta_{i}}}(k+p_{i}^{\alpha_{i}} \mathbb{Z},\ldots,k+p_{i}^{\alpha_{i}} \mathbb{Z})]^{\prod_{j \neq i} p_{j}^{\alpha_{j}-\beta_{j}}}  \\
                              &=& ((-1+p_{i}^{\alpha_{i}} \mathbb{Z}) Id)^{\prod_{j \neq i} p_{j}^{\alpha_{j}-\beta_{j}}} \\
															&=& (-1+p_{i}^{\alpha_{i}} \mathbb{Z})Id. \\
\end{eqnarray*}		

\noindent Par le lemme chinois, $M_{2p_{1}^{\alpha_{1}-\beta_{1}} \ldots p_{r}^{\alpha_{r}-\beta_{r}}}(\overline{k},\ldots,\overline{k})=-Id$. Donc, $h=2p_{1}^{\alpha_{1}-\beta_{1}} \ldots p_{r}^{\alpha_{r}-\beta_{r}}$.
\\
\\ \underline{Deuxième cas :} Si un des $p_{i}$ est pair. Quitte à modifier l'ordre des facteurs, on suppose $p_{1}=2$.
\\
\\Soit $i \in [\![2;r]\!]$. Comme dans le cas précédent, la	solution $(k+p_{i}^{\alpha_{i}} \mathbb{Z})$-monomiale minimale de $(E_{p_{i}^{\alpha_{i}}})$, est de taille $2p_{i}^{\alpha_{i}-\beta_{i}}$ avec $M_{2p_{i}^{\alpha_{i}-\beta_{i}}}(k+p_{i}^{\alpha_{i}} \mathbb{Z},\ldots,k+p_{i}^{\alpha_{i}} \mathbb{Z})=((-1)^{p_{i}^{\alpha_{i}-\beta_{i}}}+p_{i}^{\alpha_{i}} \mathbb{Z})Id=(-1+p_{i}^{\alpha_{i}} \mathbb{Z}) Id.$
\\
\\ \noindent Si $\alpha_{1}=\beta_{1}$ alors la solution $(k+2^{\alpha_{1}} \mathbb{Z})$-monomiale minimale de $(E_{2^{\alpha_{1}}})$ est de taille 2 avec \[M_{2}(k+2^{\alpha_{1}} \mathbb{Z},k+2^{\alpha_{1}} \mathbb{Z})=(-1+2^{\alpha_{1}} \mathbb{Z}) Id.\] 
\noindent On peut alors procéder comme dans le premier cas et on obtient $h=2p_{1}^{\alpha_{1}-\beta_{1}} \ldots p_{r}^{\alpha_{r}-\beta_{r}}$. On suppose donc $\alpha_{1}>\beta_{1}$.
\\
\\Par le corollaire \ref{spe2}, la	solution $(k+2^{\alpha_{1}} \mathbb{Z})$-monomiale minimale de $(E_{2^{\alpha_{1}}})$ est de taille $2^{\alpha_{1}-\beta_{1}+1}$. De plus, par le lemme \ref{39}, \[M_{2^{\alpha_{1}-\beta_{1}+1}}(k+2^{\alpha_{1}} \mathbb{Z},\ldots,k+2^{\alpha_{1}} \mathbb{Z})=((-1)^{2^{\alpha_{i}-\beta_{i}}}+2^{\alpha_{1}} \mathbb{Z})Id= (1+2^{\alpha_{1}} \mathbb{Z}) Id.\]

\noindent Comme $p_{i}^{\alpha_{i}}$ divise $N$, $h$ est un multiple de $2p_{i}^{\alpha_{i}-\beta_{i}}$. Ainsi, $h$ est un multiple de \[{\rm ppcm}(2p_{i}^{\alpha_{i}-\beta_{i}}, i \in [\![1;r]\!])=2p_{1}^{\alpha_{1}-\beta_{1}} \ldots p_{r}^{\alpha_{r}-\beta_{r}}.\]

\noindent De plus, on a $M_{2p_{1}^{\alpha_{1}-\beta_{1}} \ldots p_{r}^{\alpha_{r}-\beta_{r}}}(k+2^{\alpha_{1}} \mathbb{Z},\ldots,k+2^{\alpha_{1}} \mathbb{Z})=(1+2^{\alpha_{1}} \mathbb{Z}) Id$ et pour tout $i \in [\![2;r]\!]$

\begin{eqnarray*}
M_{2p_{1}^{\alpha_{1}-\beta_{1}} \ldots p_{r}^{\alpha_{r}-\beta_{r}}}(k+p_{i}^{\alpha_{i}} \mathbb{Z},\ldots,k+p_{i}^{\alpha_{i}} \mathbb{Z}) &=& [M_{2p_{i}^{\alpha_{i}-\beta_{i}}}(k+p_{i}^{\alpha_{i}} \mathbb{Z},\ldots,k+p_{i}^{\alpha_{i}} \mathbb{Z})]^{2^{\alpha_{1}-\beta_{1}}\prod_{j \neq 1, i} p_{j}^{\alpha_{j}-\beta_{j}}}  \\
                              &=& ((-1+p_{i}^{\alpha_{i}} \mathbb{Z})Id)^{2^{\alpha_{1}-\beta_{1}}\prod_{j \neq 1, i} p_{j}^{\alpha_{j}-\beta_{j}}} \\
															&=& (1+p_{i}^{\alpha_{i}} \mathbb{Z}) Id. \\
\end{eqnarray*}		

\noindent Donc, par le lemme chinois, $M_{2p_{1}^{\alpha_{1}-\beta_{1}} \ldots p_{r}^{\alpha_{r}-\beta_{r}}}(\overline{k},\ldots,\overline{k})=Id$. Donc, $h=2p_{1}^{\alpha_{1}-\beta_{1}} \ldots p_{r}^{\alpha_{r}-\beta_{r}}$.									

\end{proof}

\begin{remark}

{\rm Si l'un des $\beta_{i}$ est nul alors la formule n'est plus forcément vraie. En effet, considérons $N=30=2 \times 3 \times 5$ et $k=6=2 \times 3$. La solution $\overline{k}$-monomiale minimale de \eqref{p} est de taille $12 \neq 10=2 \times 2^{1-1} \times 3^{1-1} \times 5^{1-0}$.
}

\end{remark}

\noindent Ceci nous permet, en particulier, de terminer la résolution du problème évoqué au début de la section.

\begin{corollary}
\label{42}

Soient $N=l^{n}$ avec $l, n \geq 2$. La solution $\overline{l}$-monomiale minimale de \eqref{p} est de taille $2l^{n-1}$.

\end{corollary}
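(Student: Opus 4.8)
Le plan est de déduire ce corollaire directement de la proposition \ref{41}. Comme $l$ n'est pas supposé premier, je commencerais par écrire sa décomposition en facteurs premiers $l = p_1^{\gamma_1} \cdots p_r^{\gamma_r}$, avec les $p_i$ deux à deux distincts et chaque $\gamma_i \geq 1$. On a alors $N = l^n = p_1^{n\gamma_1} \cdots p_r^{n\gamma_r}$, de sorte que les exposants intervenant dans la proposition \ref{41} sont $\alpha_i = n\gamma_i$.

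Ensuite, j'écrirais $k = l = 1 \cdot p_1^{\gamma_1} \cdots p_r^{\gamma_r}$, c'est-à-dire précisément sous la forme requise par la proposition \ref{41} avec $a = 1$ (qui n'est divisible par aucun des $p_i$) et $\beta_i = \gamma_i$. Le point qu'il faut soigner est la vérification de l'hypothèse $1 \leq \beta_i \leq \alpha_i$ pour tout $i$ : l'inégalité $\beta_i = \gamma_i \geq 1$ provient de ce que $p_i$ divise effectivement $l$, et $\beta_i = \gamma_i \leq n\gamma_i = \alpha_i$ provient de $n \geq 2$. Cette vérification est essentielle car, comme le signale la remarque qui précède, la formule tombe en défaut si l'un des $\beta_i$ s'annule ; ici, tous les $\beta_i$ sont strictement positifs, ce qui légitime l'application.

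La proposition \ref{41} fournit alors que la taille de la solution $\overline{l}$-monomiale minimale de \eqref{p} vaut $2 \prod_{i=1}^{r} p_i^{\alpha_i - \beta_i}$, et il ne reste qu'à simplifier les exposants : on a $\alpha_i - \beta_i = n\gamma_i - \gamma_i = (n-1)\gamma_i$, d'où $2 \prod_{i} p_i^{(n-1)\gamma_i} = 2 \left( \prod_{i} p_i^{\gamma_i} \right)^{n-1} = 2 l^{n-1}$. L'argument est donc purement formel une fois la proposition \ref{41} acquise : il n'y a pas de réel obstacle, le seul soin à prendre étant le contrôle des hypothèses (positivité des $\beta_i$) et le calcul des exposants.
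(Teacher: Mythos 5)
Votre preuve est correcte et suit exactement la même démarche que celle du papier : décomposition de $l$ en facteurs premiers, application de la proposition \ref{41} avec $a=1$, $\alpha_i = n\gamma_i$ et $\beta_i=\gamma_i$, puis simplification des exposants pour obtenir $2l^{n-1}$. Votre vérification explicite des hypothèses $1 \leq \beta_i \leq \alpha_i$ est un soin supplémentaire que le papier laisse implicite, mais l'argument est identique.
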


\begin{proof}

Soit $l=p_{1}^{\alpha_{1}} \ldots p_{r}^{\alpha_{r}}$ avec $r \geq 1$, $p_{i}$ des nombres premiers deux à deux distincts et $\alpha_{i}$ des entiers naturels non nuls. Par la proposition précédente, la taille de la solution $\overline{l}$-monomiale minimale de \eqref{p} est $2 p_{1}^{n\alpha_{1}-\alpha_{1}} \ldots p_{r}^{n\alpha_{r}-\alpha_{r}}=2l^{n-1}$.

\end{proof}


\begin{thebibliography}{99}

\bibitem{BR}
F. Bergeron, C. Reutenauer, 
{\it $SL_{k}$-tilings of the plane}, 
Illinois J. Math., Vol. 54 no.1, (2010), pp 263-300.

\bibitem{CO2}
C. Conley, V. Ovsienko, 
{\it Quiddities of polygon dissections and the Conway-Coxeter frieze equation}, 
The Annali della Scuola Normale Superiore di Pisa, Classe di Scienze, à paraître, hal-03363857, arXiv:2107.01234.

\bibitem{C} 
M.~Cuntz,
{\it A combinatorial model for tame frieze patterns}, 
Münster J. Math., Vol. 12 no.1, (2019), pp 49-56.

\bibitem{Ma1}
F. Mabilat,
\textit{Combinatorial description of the principal congruence subgroup $\Gamma(2)$ in $SL(2,\mathbb{Z})$}, Communications in Mathematics, Vol. 30 no.1, (2022), pp 13-23, https://doi.org/10.46298/cm.9039.

\bibitem{Ma3}
F. Mabilat,
\textit{Quelques éléments de combinatoire des matrices de $SL(2,\mathbb{Z})$}, 
Bulletin des Sciences Mathématiques, Vol. 167, (2021), Article 102958, https://doi.org/10.1016/j.bulsci.2021.102958.

\bibitem{M}
F. Mabilat,
{\it Combinatoire des sous-groupes de congruence du groupe modulaire},
Annales Mathématiques Blaise Pascal, Vol. 28 no.1, (2021), pp 7-43. doi:10.5802/ambp.398. https://ambp.centre-mersenne.org/articles/10.5802/ambp.398/.

\bibitem{Ma2}
F. Mabilat,
\textit{$\lambda$-quiddité sur $\mathbb{Z}[\alpha]$ avec $\alpha$ transcendant},
Mathematica Scandinavica, Vol. 128 no.1, (2022), pp 5-13, https://doi.org/10.7146/math.scand.a-128972.

\bibitem{M2}
F. Mabilat,
{\it Combinatoire des sous-groupes de congruence du groupe modulaire II}.
Annales Mathématiques Blaise Pascal, Vol. 28 no.2, (2021), pp 199-229. doi:10.5802/ambp.404. https://ambp.centre-mersenne.org/articles/10.5802/ambp.404/.

\bibitem{Ma4}
F. Mabilat,
\textit{Entiers monomialement irréductibles}, hal-03487145, arXiv:2112.10410.

\bibitem{MO}
S. Morier-Genoud, V. Ovsienko,
\textit{Farey Boat. Continued fractions and triangulations,
modular group and polygon dissections}, 
Jahresber. Dtsch. Math. Ver., Vol. 121 no.2, (2019), pp 91-136. https://doi.org/10.1365/s13291-019-00197-7.

\bibitem{O} 
V. Ovsienko, 
{\it Partitions of unity in $SL(2,\mathbb{Z})$,  negative continued fractions,  and dissections of polygons,} 
Res. Math. Sci., Vol. 5 no.2, (2018), Article 21, 25 pp. 

\bibitem{WZ}
M. Weber, M. Zhao,
{\it Factorization of frieze patterns,}
Revista de la Unión Matemática Argentina, Vol. 60 no.2, (2019), pp 407-415.

\end{thebibliography}
\end{document}